\documentclass[11pt,reqno]{amsart}

\usepackage{a4wide}

\newif\ifproofs \newif\iftodos \newif\ifrefchecks

\proofstrue
\todostrue
\refchecksfalse

\frenchspacing

\usepackage{eqparbox}

\usepackage{amsmath}
\usepackage{amssymb}
\usepackage{graphicx}
\usepackage{mathtools}
\usepackage{enumerate}
\usepackage{comment}

\newtheorem{thm}{Theorem}[section]
\newtheorem*{thm*}{Theorem}
\newtheorem{prop}[thm]{Proposition}

\newtheorem{lem}[thm]{Lemma}
\newtheorem*{lem*}{Lemma}
\newtheorem{cor}[thm]{Corollary}
\newtheorem{defn}[thm]{Definition}
\theoremstyle{remark}
\newtheorem{rema}[thm]{\bf Remark}
\newtheorem{remas}[thm]{\bf Remarks}

\numberwithin{equation}{section}

\usepackage[color=green]{todonotes}

\setlength{\marginparwidth}{2.3cm}

\def\wh{\widehat}
\def\mbb{\mathbb}
\def\mb{\mathbf}
\def\mf{\mathfrak}

\def\mc{\mathcal}

\newlength{\equwidth}
\settowidth{\equwidth}{=}

\DeclareMathOperator{\Proj}{Proj}
\DeclareMathOperator{\Ric}{Ric}
\DeclareMathOperator{\Rho}{P}

\def\B{\mathcal B}

\def\al{\alpha}
\def\be{\beta}
\def\ga{\gamma}
\def\de{\delta}

\def\si{\sigma}
\def\ta{\tau}
\def\ups{\upsilon}
\def\Ups{\Upsilon}
\def\ph{\varphi}

\def\ps{\psi}
\def\om{\omega}
\def\Ga{\Gamma}

\def\Th{\Theta}

\def\Ph{\Phi}
\def\Ps{\Psi}

\def\pa{\partial}

\def\idx#1{{\em #1\/}}
\def\del{\pa}

\def\im{\mathrm{im}\ }
\def\dim{\mathrm{dim\ }}

\def\wt{\widetilde}
\def\ol{\overline}

\def\O{\ensuremath{\mathrm{O}}}

\def\g{\ensuremath{\mathfrak{g}}}

\def\ce{\ensuremath{\mathcal{E}}}

\newcommand{\bep}{\mbox{\boldmath{$ \epsilon$}}}

\def\today{\ifcase\month\or
 January\or February\or March\or April\or May\or June\or
 July\or August\or September\or October\or November\or December\fi
 \space\number\day, \number\year}

\usepackage{paralist} 
\usepackage{graphicx} 
\usepackage{hyperref}
\usepackage[all]{xy}

\def\mb{\mathbf} 
\def\mf{\mathfrak}

\def\mc{\mathcal}

\def\d{\operatorname{d}\!}

\usepackage{tensor}
\newcommand{\ind}{\indices}
\newcommand{\parderv}[2] {\frac{\partial#1}{\partial#2}}

\newenvironment{note}{\bgroup \par \medskip \color{magenta} \ttfamily}{\rmfamily \medskip \egroup}


\ifproofs
\else
\todosfalse
\refchecksfalse
\usepackage{environ}
\NewEnviron{killcontents}{}

\fi

\iftodos\else\presetkeys{todonotes}{disable}{}\fi

\ifrefchecks\usepackage{refcheck}\fi

\hyphenation{con-si-de-ring}

\title{Modified conformal extensions}

\author[Hammerl, Sagerschnig, \v{S}ilhan, \v{Z}\'adn\'ik]{Matthias Hammerl, Katja Sagerschnig,
Josef \v{S}ilhan and Vojt\v{e}ch \v{Z}\'adn\'ik}

\address{\flushleft M. H.:  University of Applied Sciences Wiener Neustadt, Austria
\newline K. S.: Center for Theoretical Physics PAS, Al. Lotników 32/46, 02-668 Warszawa, Poland
\newline J. \v S.: Masaryk University, Faculty of Science, Kotl\'{a}\v{r}sk\'{a} 2, 61137 Brno, Czech Republic
\newline 
V. \v{Z}: Masaryk University, Faculty of Education, Po\v{r}\'\i\v{c}\'\i\ 31, 60300 Brno, Czech Republic
}
\email{matthiasrh@gmail.com, kat.sagerschnig@gmail.com, silhan@math.muni.cz, 
\newline zadnik@mail.muni.cz}

\date{\today}

\subjclass[2000]{53A20, 53A30, 53B30, 53C07} 

\keywords{Differential geometry, Patterson--Walker metric, Projective structure, Conformal structure, 
Conformal Killing field, Einstein metric, Fefferman--Graham ambient metrics}


\begin{document}

\maketitle

\begin{abstract}
We present a geometric construction and characterization of $2n$-dimensional split-signature conformal structures endowed with  a twistor spinor with integrable kernel. 
The construction is regarded as a modification of the conformal Patterson--Walker metric construction for $n$-dimensional projective manifolds.
The characterization is presented in terms of the twistor spinor and an integrability condition on the conformal Weyl curvature. 
We further derive complete description of Einstein metrics and infinitesimal conformal symmetries in terms of suitable projective data.
Finally, we obtain an explicit geometrically constructed Fefferman--Graham ambient metric and show vanishing of $Q$-curvature.
\end{abstract}

\section{Introduction} \label{Intro}

\bgroup
\renewcommand{\thethm}{\Alph{thm}}

Walker manifolds are pseudo-Riemannian manifolds admitting a parallel isotropic distribution. 
In the case of split-signature, there is an interesting subclass of Walker metrics that are induced on the cotangent bundle of a manifold $M$ with a torsion-free affine connection $D$.
These are the \emph{standard Patterson--Walker metrics} or \emph{(pseudo-)Riemannian extensions} of affine structures, introduced in \cite{Patterson1952}.
A Patterson--Walker (shortly, PW) metric on the total space of $T^*M$ is given by the natural pairing between the vertical distribution of the projection $T^*M\to M$ and the horizontal distribution corresponding to $D$.
It has the following coordinate expression
\begin{align}\label{pattersonwalker}
\wt{g} =  2 \, \d x^A \odot \d p_A -2 \, \Ga\ind{_A^C_B} \, p_C\, \d x^A \odot \d x^B \, ,
\end{align}
where $(x^A)$ are local coordinates on $M$, $(p_A)$ the canonical fibre coordinates and $\Gamma \ind{_A^C_B}$ the Christoffel symbols of the underlying affine connection $D$.
Here, the vertical distribution $V\subset TT^*M$ is the isotropic distribution that is parallel with respect to the Levi-Civita connection of $\wt{g}$.

There are variants or modifications of the standard PW construction that still yield Walker metrics. 
They appear with various motivations and applications in both early and more recent references, see, e.g., \cite{Patterson1952}, \cite{Walker1954}, \cite{Patterson1954}, \cite{Afifi1954}, \cite{Derdzinski2009}, \cite{Gilkey2009a}, \cite{Dunajski2018}, \cite{cap-mettler}.
A tight relation between the underlying and the induced data is a common feature of all such constructions.
In this article, we focus on modifications of the form 
\begin{align}
\ol{g}:=\wt{g}+\pi^*\Phi \, ,
\label{mod-PW}
\end{align}
where $\wt{g}$ is the standard PW metric and $\pi^*\Phi$ is the pullback of a symmetric 2-tensor $\Phi$ on $M$ with respect to the projection $\pi:T^*M\to M$.
We call such metrics the \emph{$\Phi$-modified} or just \emph{modified PW metrics}, noticing that other names can be found in the literature.

Considering the relations between the initial affine connection on $M$ and the induced PW metric on $T^*M$, it is natural to analyze the effect of projective change.
It is observed already in \cite{Afifi1954} that projectively flat affine connections correspond to conformally flat metrics.
A projective-to-conformal adaptation of the standard PW construction is made precise and its various aspects are explored in the series of papers \cite{hsstz-fefferman}, \cite{hsstz-ambient}, \cite{hsstz-walker}.
For a projective change of an affine connection to lead to a conformal change of the induced metric, several specifications are needed.
In particular, $M$ is supposed to be oriented and the cotangent bundle $T^*M$ is replaced by its appropriately weighted variant.
In this case, we speak of the \emph{conformal PW metric} or the \emph{conformal extension} of a projective structure. 
The conformal PW metrics were characterized in conformally invariant terms and relationships between infinitesimal symmetries and other objects on the respective structures were obtained. 
Our aim is to extend this research to the modified case.
In the present article, we study the \emph{modified conformal PW metrics} or the \emph{modified conformal extensions} of projective structures, which are the conformal structures that can be represented by a modified PW metric \eqref{mod-PW}. 
To keep the equivariance of  the construction,  the modification tensor $\Phi$ is also to be weighted.
Details and preliminary observations are in section \ref{Prelim}.

Our first result concerns the characterization of the flatness of the modified conformal extension via the flatness of the initial projective structure and certain condition on the modification tensor, see Theorem \ref{relateFlat}. 
Here we encounter the role of invariant differential operators called \emph{BGG operators}, which appear repeatedly in the entire article.
For the reader's convenience, a quick introduction and details on all projective operators used in the article are collected in appendix~\ref{app-A}.

Conformal extensions of projective structures are characterized in \cite[Theorem 1]{hsstz-walker}.
Strictly speaking, the characterization concerns conformal spin structures.
Since we discuss local pro\-perties throughout this article, we may skip the spin assumption.
Relaxing some of conditions in \cite[Theorem 1]{hsstz-walker}, we obtain the characterization of modified conformal extensions in Theorem \ref{thm-char}:

\begin{thm} 
Locally, a conformal structure of split signature is a modified conformal extension of a projective structure if and only if the following properties are satisfied:
\begin{enumerate}[(a)]
\item there is a pure twistor spinor $\chi$  with integrable kernel $\ker\chi$,
\item the following integrability condition holds
\begin{align*} 
{W}_{ab}{}^c{}_d \, v^a w^d & =0 , \quad \text{for all $v^a, w^d\in\ker\chi$} ,
\end{align*}
where ${W}_{ab}{}^c{}_d$ is the conformal Weyl curvature.
\end{enumerate}
\end{thm}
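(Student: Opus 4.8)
The plan is to build on the characterization of ordinary conformal extensions in \cite[Theorem 1]{hsstz-walker} and to isolate exactly how the modification tensor $\Phi$ enters the conformal data. Throughout I would work in a local frame adapted to the isotropic distribution $\ker\chi$, in which the Walker (respectively Patterson--Walker) normal form is manifest and $\ker\chi$ is identified with the vertical distribution $V$ of a projection $\pi$. In these terms the content of the theorem is that condition (b) is precisely the curvature condition which permits a fibre-constant term $\pi^*\Phi$ in the metric while still forcing the fibre-linear term to encode a torsion-free (projective) connection.

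For the \emph{only if} direction I would start from a modified PW metric $\ol{g}=\wt{g}+\pi^*\Phi$ and check that the structural features of (a) survive the modification. Since $\pi^*\Phi$ is a pullback it annihilates vertical vectors, $\pi^*\Phi(V,\,\cdot\,)=0$, so $V$ remains $\ol{g}$-isotropic of maximal dimension and integrable, being tangent to the fibres of $\pi$; hence $\chi$ is still pure with integrable kernel $\ker\chi=V$. That $\chi$ remains a twistor spinor for $\ol{g}$ is not automatic, as $\ol{g}$ and $\wt{g}$ need not be conformal; instead I would compute the correction terms in the twistor equation produced by $\pi^*\Phi$ and show they drop out, using that $\Phi$ has no dependence on the fibre coordinates and that $\chi$ is annihilated by $V$. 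For (b) I would compare Weyl tensors: $\ol{g}$ differs from $\wt{g}$ by a fibre-constant term, so the induced change in ${W}_{ab}{}^c{}_d$ involves only derivatives of $\Phi$; its relevant components, obtained by contracting the first and fourth legs into $V$, vanish because $\Phi$ is fibre-independent, and the Patterson--Walker part already satisfies (b), giving the claim.

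For the \emph{if} direction I would reverse the construction. By (a), integrability of $\ker\chi$ yields a local foliation whose leaf space is an $n$-manifold $M$, and purity together with the twistor equation put a conformal representative into Walker normal form with Walker distribution $\ker\chi=V$. A general such metric has fibre-coefficients $g_{AB}(x,p)$ with arbitrary dependence on the fibre coordinate $p$; the role of condition (b) is to force this dependence to be \emph{affine} in $p$. Splitting the affine coefficient, the fibre-linear part is read off as the Christoffel symbols $\Gamma_A{}^C{}_B$ of a torsion-free connection $D$ on $M$ --- defining, after accounting for the weighting that matches the conformal scale freedom to projective freedom, a projective class $[D]$ --- while the fibre-constant part is the modification tensor $\Phi$. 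Thus $[\ol{g}]$ is exhibited as the $\Phi$-modified conformal extension of $[D]$.

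The step I expect to be the main obstacle is proving that condition (b) is exactly equivalent to fibre-affine dependence: one must show that the weaker contraction---vanishing of ${W}_{ab}{}^c{}_d$ only when the first and fourth indices lie in $\ker\chi$---rules out quadratic and higher powers of $p$ but tolerates the fibre-constant term, and that the surviving fibre-linear part transforms as a projective connection under the residual scale freedom. The difficulty is bookkeeping rather than any single computation: one needs a careful choice of scale and adapted frame, and the tractor/BGG description of $\chi$ and of $\Phi$, so that the vertical, horizontal, and weight directions decouple and the modification term is cleanly separated from the genuine projective content.
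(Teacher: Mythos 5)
Your overall strategy matches the paper's, and your \emph{only if} direction is essentially the paper's argument: the paper shows (Lemma \ref{relateD}) that the difference tensor between the Levi-Civita connections of $\wt{g}$ and $\ol{g}=\wt{g}+\pi^*\Phi$ is strictly horizontal, so the spinor $\chi$, which is parallel for a suitable scale of $\wt{g}$, remains parallel for $\ol{g}$ (Lemma \ref{lem-chi}), and the Weyl tensors differ by a strictly horizontal tensor (Proposition \ref{relateW}); conditions (a) and (b) follow.

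The divergence is in the converse, precisely at the step you yourself flag as ``the main obstacle'' and leave open, namely that (b) forces affine dependence on the fibre coordinates in a Walker normal form. The paper replaces this computation by a structural argument with two inputs you do not identify. First, by \cite[Proposition 4.2]{hsstz-walker} one may choose a metric $g_{ab}$ in the conformal class for which $\chi$ is \emph{parallel} --- strictly more than a Walker normal form --- and for such a scale the Schouten tensor is strictly horizontal, whence condition (b) on the Weyl tensor becomes equivalent to $R_{ab}{}^c{}_d\,v^aw^d=0$ for $v,w\in\ker\chi$. This equivalence genuinely needs the horizontality of the Schouten tensor: in the contraction of the Schouten-tensor correction terms in \eqref{eq-WRP} with $v^aw^d$, only the summand proportional to $g_{ad}v^aw^d$ dies by isotropy of $\ker\chi$, while the remaining ones require $\Rho_{ab}$ contracted with a vertical vector to vanish. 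So ``a careful choice of scale'' must mean specifically the parallel-spinor scale. Second, the resulting Riemann-tensor condition is exactly the classical obstruction for the Levi-Civita connection to descend to an affine connection $D_A$ on the leaf space, and \cite[Theorem 4.5]{Derdzinski2009} then identifies $g_{ab}$, via a choice of section of $\wt{M}\to M$, with a modified Riemann extension of $D_A$ --- this is the content of your ``affine in $p$'' claim, obtained without normal-form bookkeeping. The projective class and the weight-$2$ identification then come from comparing two parallel-spinor scales, which differ by a conformal factor constant along the leaves. Your plan can be completed, but without at least the first input the passage from the Weyl condition to the fibre-affine statement does not go through as written.
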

\noindent
Here, of course, $\ker\chi$ corresponds to the vertical distribution $V$.
The theorem can be understood as a conformal version of known results in the (pseudo-)Riemannian setting, cf. \cite{Afifi1954} and \cite{Derdzinski2009}. 
Another ingredient entering the proof, as well as many other places in this article, is the existence of a metric in the conformal class whose Levi-Civita connection leaves $\chi$ parallel, see \cite[Proposition 4.2]{hsstz-walker}.

The central part of the article deals with special properties and objects on modified conformal extensions and their relations to underlying projective counterparts.
In particular, for a modified PW metric, we provide a complete description of Einstein metrics in its conformal class and the infinitesimal conformal symmetries in section \ref{Einstein} and \ref{Symm}, respectively.
The description is given in terms of geometric data on the underlying projective manifold, 
the main results are collected in Theorems \ref{downaE} and \ref{summaryCKE}. 
Although the technicalities may seem discouraging, the scheme is clear:
both for Einstein metrics and infinitesimal conformal symmetries, there are just few building blocks, satisfying certain projectively invariant conditions, from which the objects of interest are constructed.
In the standard (non-modified) case, the blocks are well separated and the conditions consist mainly of the BGG equations, cf. \cite{hsstz-walker}.
In general, the equations are twisted by additional differential operators involving the modification term $\Phi$.
This is also why the current development is in many respects different from the non-modified situation.

The conditions characterizing Einstein scales and infinitesimal conformal symmetries simplify considerably in certain 
special cases that are discussed in section \ref{aEspecial} and \ref{cKspecial}, respectively.
In such cases, the otherwise intertwined conditions can be (at least partly) separated and simplified.
For the sake of illustration, we pick Theorem \ref{aEprojflat} that describes Einstein metrics of modified conformal 
extensions of projectively flat underlying structures:

\begin{thm} \label{thm-B}
Consider a modified PW metric associated to a projectively flat affine connection $D_A$ and a modification tensor 
$\Ph_{AB}\in\ce_{(AB)}(2)$. 
Locally, there is a bijective correspondence between almost Einstein scales of the conformal class and pairs $\ta \in \ce(1)$ and $\xi^A \in \ce^A(-1)$ satisfying 
\begin{align}
& (D_{A} D_{B} + \Rho_{AB}) \ta = 0 , \label{i-flat-ta} \\
& (D_A \xi^B)_0=0 , \quad \xi^R\, \B_2(\Ph)_{ABCR}=0 . \label{i-flat-xi}
\end{align}
The dimension of the space of almost Einstein scales equals to $d+n+1$, where $d$ is the dimension of the space of solutions to \eqref{i-flat-xi} and $n$ is the dimension of the underlying projective manifold.
\end{thm}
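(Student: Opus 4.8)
The plan is to derive Theorem~\ref{thm-B} as the projectively flat specialization of the general description of almost Einstein scales obtained in Theorem~\ref{downaE}. Recall that an almost Einstein scale of the conformal class is equivalent to a parallel standard conformal tractor, and that Theorem~\ref{downaE} already re-encodes such parallel tractors as tuples of projective fields on $M$ subject to a system of (generally coupled) projectively invariant equations, whose coupling terms are controlled by the projective curvature of $D_A$ and by the modification tensor $\Ph_{AB}$ through BGG-type operators. Throughout I would work in the distinguished scale in which the twistor spinor $\chi$ of Theorem~\ref{thm-char} is parallel (\cite[Proposition 4.2]{hsstz-walker}), so that the vertical distribution $\ker\chi$ and the induced splitting of tractor data into horizontal and fibre parts are adapted to the computation.

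First I would impose projective flatness of $D_A$, i.e.\ the vanishing of the projective Weyl and Cotton tensors. Substituting this into the general system of Theorem~\ref{downaE} makes every coupling term mediated by the projective curvature disappear, and the tuple describing an almost Einstein scale splits into two independent blocks: a density $\ta\in\ce(1)$ and a weighted vector field $\xi^A\in\ce^A(-1)$. The block governing $\ta$ collapses to the single equation $(D_AD_B+\Rho_{AB})\ta=0$ of \eqref{i-flat-ta}, which is precisely the parallel (co)tractor equation of projective geometry. The block governing $\xi$ reduces to the infinitesimal-automorphism equation $(D_A\xi^B)_0=0$, together with one residual constraint that survives projective flatness because it is produced by the modification tensor rather than by the curvature of $D_A$; identifying this leftover term with the appropriate BGG operator applied to $\Ph$ yields the contraction $\xi^R\,\B_2(\Ph)_{ABCR}=0$. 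Checking that the reconstruction map, which builds the parallel conformal tractor and hence the Einstein scale back out of $(\ta,\xi)$, is well defined and inverse to this assignment establishes the claimed bijection.

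For the dimension count I would argue blockwise. By the previous step the correspondence realizes the space of almost Einstein scales as the direct sum of the solution spaces of \eqref{i-flat-ta} and \eqref{i-flat-xi}. Since $D_A$ is projectively flat, \eqref{i-flat-ta} is the parallel cotractor equation for a flat tractor connection, so on a simply connected chart its solution space has dimension equal to the rank $n+1$ of the standard tractor bundle. Writing $d$ for the dimension of the solution space of \eqref{i-flat-xi}, the independence of the two blocks gives $\dim=(n+1)+d=d+n+1$.

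The main obstacle is the second step: verifying that the decoupling is genuine and, in particular, that the only surviving obstruction in the $\xi$-block is exactly $\xi^R\,\B_2(\Ph)_{ABCR}=0$. This requires tracking precisely how $\Ph_{AB}$ enters the conformal Schouten and Weyl curvatures of the modified PW metric $\ol g$ and confirming that, once projective flatness has removed all curvature-mediated cross terms between $\ta$ and $\xi$, the modification contributes through $\B_2(\Ph)$ and through no other tensor. A secondary point is to check that no hidden compatibility condition couples $\ta$ and $\xi$ after the curvature terms vanish, so that the sum of the two solution spaces is indeed direct.
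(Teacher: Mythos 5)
There is a genuine gap at precisely the point you flag as the main obstacle: the system of Proposition \ref{downaE} does \emph{not} decouple into independent $\ta$- and $\xi$-blocks when $D_A$ is projectively flat. Flatness only disposes of the integrability condition \eqref{taxi0}; the remaining equation \eqref{taxi2} for the $\ta$-component reads $(D_AD_B+\Rho_{AB})\ta=\mc{F}(\xi,\Ph)_{AB}$, and the source term \eqref{Bop} is built from $\xi$ and the modification tensor $\Ph$, not from the projective curvature, so it survives. Hence the $\ta$-component of an actual almost Einstein scale solves an \emph{inhomogeneous} equation, and the naive assignment $\si=\xi^Rp_R+\ta$ with $\ta$ a solution of the homogeneous equation \eqref{i-flat-ta} does not produce an almost Einstein scale. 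Your claim that ``the block governing $\ta$ collapses to the single equation $(D_AD_B+\Rho_{AB})\ta=0$'' is therefore false as stated, and the ``direct sum'' justification of the dimension count rests on it.

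The missing idea is the one the paper's proof turns on: apply the second BGG operator of the $\ce(1)$-sequence to \eqref{taxi2}. In the flat case the left-hand side is annihilated (the BGG sequence is a complex), and by Corollary \ref{E2BGG} the right-hand side yields exactly $\xi^R\,\B_2(\Ph)_{ABCR}=0$; so this condition is the integrability condition for solvability of the inhomogeneous $\ta$-equation, not a constraint that lives in a separate $\xi$-block as your outline suggests. Conversely, once it holds, local exactness of the flat BGG complex guarantees a particular solution $\ta_p$ of \eqref{taxi2}, and the general solution is $\ta_p$ plus a solution of the homogeneous equation, whose space has dimension $n+1$. This affine structure is what makes the correspondence a bijection and gives the count $d+n+1$, and it is also the reason the statement is only local. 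Your outline never produces the particular solution, so the reconstruction map you describe is not well defined.
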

\noindent
Here and below, $\Rho_{AB}$ is the projective Schouten tensor of $D_A$ and the numbers in parentheses are the projective weights.
The operator $\B_2: \ce_{(AB)}(2) \to \ce_{[AB][CD]}(2)$ is the projective second BGG operator in the sequence starting with $\ce_A(2)$.
The summand $n+1$ in the previous count is the dimension of the solution space to \eqref{i-flat-ta}, which is a (first) BGG equation corresponding to so-called almost Ricci-flat scales.
An analogous result for infinitesimal symmetries is presented in Theorem \ref{cKprojflat}:

\begin{thm} \label{thm-C}
Consider a modified PW metric associated to a projectively flat affine connection $D_A$ and a modification $\Ph_{AB}\in\ce_{(AB)}(2)$ such that $\B_2(\Ph)$ is generic. 
Locally, there is a bijective correspondence between its conformal Killing fields and triples $v^A \in \ce^A$, $\al_A\in\ce_A(2)$ and $\mathring{\ps} \in \mathbb{R}$ satisfying 
\begin{align}
& \bigl( D_{A} D_{B} v^C + \Rho_{AB} v^C \bigr)_0 =0 , \label{i-flat-v} \\
& \mathcal{L}_v (\B_2(\Ph)) = \mathring{\ps} \B_2(\Ph) , \label{i-flat-lie} \\
& D_{(A}\al_{B)}=0 . \label{i-flat-al}
\end{align}
The dimension of the space of conformal Killing fields equals to $d+\frac12 n(n+1)$, where $d$ is the dimension of 
the space of solutions to \eqref{i-flat-v} and \eqref{i-flat-lie} 
and $n$ is the dimension of the underlying projective manifold.
\end{thm}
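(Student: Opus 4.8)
The plan is to derive this statement by specializing the general description of the infinitesimal conformal symmetries of a modified conformal extension, Theorem~\ref{summaryCKE}, to the projectively flat case, and to show that the genericity hypothesis on $\B_2(\Ph)$ causes the otherwise intertwined conditions of that theorem to split into the three equations \eqref{i-flat-v}, \eqref{i-flat-lie} and \eqref{i-flat-al}. As in the rest of the article, I would work in the metric of the conformal class whose Levi-Civita connection leaves the distinguished pure twistor spinor $\chi$ parallel. This distinguished scale splits any conformal Killing field, along the vertical distribution $V=\ker\chi$ and the transverse directions, into projective building blocks on the base: a vector field $v^A\in\ce^A$, a weighted one-form $\al_A\in\ce_A(2)$, and a scalar. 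The task is then to identify the equations these blocks must satisfy.

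First I would record the simplifications forced by projective flatness. Passing to a flat affine representative of the projective class, the projective Weyl curvature vanishes and the prolongation connections governing the BGG operators occurring in Theorem~\ref{summaryCKE} become flat; hence all curvature terms that, in general, couple the three blocks drop out. The top block, carrying $v^A$, is then governed exactly by the first projective BGG operator on $\ce^A$, which is equation \eqref{i-flat-v}, so $v^A$ is an infinitesimal projective symmetry. The bottom block, carrying $\al_A$, obeys the projectively invariant equation $D_{(A}\al_{B)}=0$, equation \eqref{i-flat-al}; one checks that at weight $2$ the symmetrized covariant derivative is projectively invariant, so this is a genuine first BGG operator, and in the flat case it decouples completely from $v^A$ and from $\Ph$.

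The crux, and the step I expect to be the main obstacle, is the middle block together with the role of genericity. In the general statement the interaction of a symmetry with the modification is recorded by a $\Ph$-twisted condition relating $\mathcal{L}_v(\B_2(\Ph))$ to a scalar density times $\B_2(\Ph)$, plus lower order terms in $\al$ and in the scalar. Projective flatness annihilates the lower order terms, leaving $\mathcal{L}_v(\B_2(\Ph))=\ps\,\B_2(\Ph)$ for a function $\ps$. The scalar enters Theorem~\ref{summaryCKE} as a component of the prolonged symmetry, so once the prolongation connection is flat this component is parallel, i.e. a constant $\mathring{\ps}\in\mathbb{R}$; this yields \eqref{i-flat-lie}. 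Here the genericity of $\B_2(\Ph)$ is essential: it guarantees that the eigenvalue $\mathring{\ps}$ is pointwise determined by $v$ and that no further symmetries of $\B_2(\Ph)$ contaminate the reduction, so that \eqref{i-flat-lie} captures the entire content of the twisted condition. The constant $\mathring{\ps}$ is the infinitesimal rate at which the flow of $v$ rescales the modification tensor, in accordance with the conformal weight of $\Ph$.

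Finally I would assemble the bijection and count dimensions. The correspondence is the restriction of the bijection of Theorem~\ref{summaryCKE}: a triple $(v^A,\al_A,\mathring{\ps})$ is built into a conformal Killing field by the explicit formulas there, and the split along $V=\ker\chi$ recovers the triple, the two assignments being mutually inverse because, in the projectively flat generic case, \eqref{i-flat-v}--\eqref{i-flat-al} are equivalent to the general conditions. For the count, the decoupled $\al$-block contributes the dimension of the solution space of \eqref{i-flat-al}, which in a flat affine scale reads $\pa_{(A}\al_{B)}=0$ and is solved by $\al_A=c_A+c_{AB}x^B$ with $c_{AB}=-c_{BA}$; these form a space isomorphic to $\Lambda^2\mathbb{R}^{n+1}$, of dimension $\tfrac12 n(n+1)$. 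The coupled system \eqref{i-flat-v}--\eqref{i-flat-lie} for $(v^A,\mathring{\ps})$ contributes its solution dimension $d$, and independence of the two blocks gives the total $d+\tfrac12 n(n+1)$, exactly parallel to the Einstein count in Theorem~\ref{aEprojflat}.
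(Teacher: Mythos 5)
There is a genuine gap: your decomposition of a conformal Killing field into ``a vector field $v^A$, a weighted one-form $\al_A$, and a scalar'' omits the fourth building block of Proposition \ref{downCKE} and Theorem \ref{summaryCKE}, namely the bivector $w^{AB}\in\ce^{[AB]}(-2)$ arising as the coefficient of $p_B$ in the horizontal part of \eqref{polCKE}. The entire content of the genericity hypothesis is to kill this block, whereas your proposal assigns genericity a different (and incorrect) role, namely ``determining the eigenvalue $\mathring{\ps}$ pointwise'' and preventing ``contamination''. In the actual argument one applies the second BGG operator of the relevant sequence to \eqref{wpsph2}, which in the flat case yields $\bigl(w^{CR}\,\B_2(\Ph)_{ABRD}\bigr)_0=0$; combining this with \eqref{wpsph7}, which in the flat case reads $w^{RS}\B_2(\Ph)_{ARSD}=0$, gives $w^{CR}\,\B_2(\Ph)_{ABRD}=0$, and only now the injectivity of $\B_2(\Ph)$ as a bundle map $\ce^A\to\ce_{[AB]C}$ forces $w^{AB}=0$. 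Without this step the reduction you describe fails: projective flatness alone does not decouple the blocks, because the coupling terms $\mc{F}(w,\Ph)$ on the right-hand sides of \eqref{wpsph2} and \eqref{wpsph4} are built from derivatives of $\Ph$ contracted with $w$, not from curvature, and they survive flatness. In particular, $v^A$ satisfies the homogeneous equation \eqref{i-flat-v} and $\mathring{\ps}$ is constant only \emph{after} $w^{AB}=0$ has been established.

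A secondary issue: equation \eqref{i-flat-lie} is not a condition of Theorem \ref{summaryCKE} with ``lower order terms annihilated by flatness''; it is obtained by applying the second BGG operator \eqref{BGGce_A2} to the $\al$-equation \eqref{wpsph3}, using that $\B_2$ commutes with $\mc{L}_v$ and that $\mathring{\ps}$ is already known to be constant. The converse direction --- that \eqref{i-flat-lie} guarantees the local solvability of \eqref{wpsph3} for some $\al_A$, with the freedom parametrized by solutions of $D_{(A}\al_{B)}=0$ --- rests on the local exactness of the BGG complex in the flat case, which you invoke only implicitly. Your count of $\tfrac12 n(n+1)$ for the solution space of \eqref{i-flat-al} and the final total are correct, but they sit on top of the unproved reduction.
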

\noindent
Here $\mathcal{L}_v$ denotes the Lie derivative in the direction of the vector field $v^A$ and 
the genericity of $\B_2(\Ph)$ means that this field, interpreted as a bundle map $\ce^A \to \ce_{[AB]C}$, is injective.
The summand $\frac12 n(n+1)$ in the previous count is the dimension of the solution space to \eqref{i-flat-al}, which is a (first) BGG equation corresponding to so-called projective Killing forms.

Another special cases treated in sections \ref{aEspecial} and \ref{cKspecial} concern modified conformal extensions of 2-dimensional projective structures.
In particular, we describe all possible dimensions of the spaces of almost Einstein scales and conformal infinitesimal symmetries in Theorems \ref{eAdim2} and \ref{cKdim2}, respectively.
In this context, an example with submaximal algebra of infinitesimal conformal symmetries is analyzed in section \ref{submax}.

In the last section \ref{Ambient}, we show that any modified PW metric admits a global Fefferman--Graham ambient metric.
In particular, the Fefferman--Graham obstruction tensor (which is the Bach tensor in dimension four) vanishes.
In this part, the modification tensor $\Phi$ enters quite innocently and the arguments are very similar to the non-modified situation studied in \cite{hsstz-ambient}.
The main statement, Theorem \ref{Theorem_ambient}, is reproduced as follows:

\begin{thm} 
Let $\ol{g}$ be a modified PW metric with the Schouten tensor $\ol\Rho$ and let $t$, $\rho$ be the additional coordinates on the ambient manifold.
Then 
\begin{align} 
\mb{g}=2\rho \d t\odot \d t + 2t \d t\odot \d\rho +t^2 \Big( \ol{g} +2\rho\,\ol\Rho \Big)
\end{align}
is a globally Ricci-flat Fefferman--Graham ambient metric of the conformal class of $\ol{g}$.
\end{thm}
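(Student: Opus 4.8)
The plan is to check that the explicit $\mb g$ meets the three defining requirements of a Fefferman--Graham ambient metric for $[\ol g]$ --- degree--two homogeneity under the dilations, the correct restriction along the zero section, and Ricci--flatness --- and to observe that only the last is nontrivial. With Euler field $T=t\,\pa_t$ the factor $t^2$ gives $\mathcal{L}_T\mb g=2\mb g$; the slice $\rho=0$ restricts to $t^2\ol g$, i.e. the tautological representative, and the first jet $\pa_\rho g_\rho|_{\rho=0}=2\ol\Rho$ is exactly the ambient initial datum. Thus $\mb g$ is already in Fefferman--Graham normal form with the family $g_\rho:=\ol g+2\rho\,\ol\Rho$, which is \emph{affine} in $\rho$, and it remains to establish $\Ric(\mb g)=0$.

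First I would recall that for a metric in this normal form the equation $\Ric(\mb g)=0$ is equivalent to a system of tangential, mixed and normal equations for $g_\rho$, built from $\dot g_\rho:=\pa_\rho g_\rho$, $\ddot g_\rho$, their $g_\rho$--traces, the quadratic term $\dot g_\rho\,g_\rho^{-1}\dot g_\rho$, and $\Ric(g_\rho)$. For the affine family one has at once $\dot g_\rho=2\ol\Rho$ and $\ddot g_\rho=0$, so every contribution carrying a second $\rho$--derivative drops out and the system becomes a set of identities to be verified for $\ol\Rho$.

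The heart of the matter is a pair of algebraic properties of the Patterson--Walker Schouten tensor. In the splitting $TT^*M=V\oplus H$ into the parallel isotropic distribution $V=\ker\chi$ and a horizontal complement $H$, the metric $\ol g=\wt g+\pi^*\Ph$ has nontrivial blocks only in $H\times H$ and in the dual pairing $V\times H$, while $\ol\Rho_{ab}$ is supported entirely in $H\times H$. Hence the endomorphism $\ol\Rho^a{}_b=\ol g^{ac}\ol\Rho_{cb}$ sends $H$ into $V$ and annihilates $V$, so it is two--step nilpotent, $\ol\Rho^a{}_c\,\ol\Rho^c{}_b=0$, and trace--free, $\ol g^{ab}\ol\Rho_{ab}=0$; the pullback modification $\pi^*\Ph$ sits in the same $H\times H$ block and spoils neither property. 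I would verify this in adapted Patterson--Walker coordinates, where the block forms of $\ol g$, $\ol g^{-1}$ and $\ol\Rho$ are explicit, carrying $\pi^*\Ph$ through exactly as in the non--modified computation of \cite{hsstz-ambient}.

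Feeding these identities into the system closes it at first order: nilpotency annihilates $\dot g_\rho\,g_\rho^{-1}\dot g_\rho$, trace--freeness kills every trace of $g_\rho^{-1}\dot g_\rho$ (so the normal equation is trivial and the mixed equation reduces to $\cov^k\ol\Rho_{ki}=\cov_i J=0$, a consequence of the contracted Bianchi identity and $J=\ol g^{ab}\ol\Rho_{ab}=0$), and the tangential equation collapses to the Einstein--type relation $\Ric(g_\rho)=(2n-2)\,\ol\Rho$. Because $g_\rho$ retains the Walker block structure for every $\rho$, its Ricci tensor again lies in the $H\times H$ block and this relation holds identically in $\rho$; the expansion therefore terminates and $\mb g$ is exactly Ricci--flat. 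Finally, since $t$ and $\rho$ are global fibre and transverse coordinates and $\ol g,\ol\Rho$ are globally defined on $T^*M$, the closed formula produces the ambient metric globally, with no obstruction to patching. I expect the main obstacle to be precisely the explicit verification that the Patterson--Walker Schouten tensor is nilpotent, trace--free and divergence--free and that $\Ric(g_\rho)$ is $\rho$--independent --- this is where the special geometry does all the work and where the modification $\Ph$ must be tracked, in close parallel with \cite{hsstz-ambient}.
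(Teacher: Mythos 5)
Your argument is correct, and it is essentially the ``direct verification'' route that the paper mentions in one sentence (checking Ricci-flatness via \cite[formula (3.17)]{Fefferman2012} together with the horizontality of $\ol\Rho$) but does not carry out; the proof the paper actually elaborates is different. The paper observes that $\mb{g}$ is itself a modified PW metric -- namely the $\wh\Phi$-modified PW metric of the Thomas ambient (cone) connection $\nabla$ on $\mc{C}$, identified with \eqref{ambient-local} by the coordinate change of \cite[Theorem 2]{hsstz-ambient} -- so that Ricci-flatness of $\mb{g}$ follows in one stroke from the Ricci-flatness of $\nabla$ via Proposition \ref{relateR} applied one dimension up, with no need to touch the normal-form system at all. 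Your route instead pays for its elementarity by requiring the chain of identities you list (nilpotency and trace-freeness of $\ol\Rho{}^a{}_b$, vanishing of the divergence, and $\rho$-independence of $\Ric(g_\rho)$); all of these do hold, and indeed all but the last are immediate from the strict horizontality of $\ol\Rho$ established in Proposition \ref{relateR}. One small repair: for the tangential equation you justify $\Ric(g_\rho)=(2n-2)\,\ol\Rho$ for all $\rho$ by saying that $\Ric(g_\rho)$ ``lies in the $H\times H$ block,'' which is weaker than what you need (it must \emph{equal} $\Ric(\ol g)$, not merely sit in the same block). The clean argument is that for each fixed $\rho$ the metric $g_\rho=\wt g+\pi^*(\Phi+2\rho\,\Rho)$ is again a modified PW metric, so Proposition \ref{relateR} gives $\Ric(g_\rho)=\wt{\Ric}$ independently of the modification; the same observation (via Lemma \ref{relateD}) shows the Levi-Civita connections of $g_\rho$ and $\ol g$ differ by a strictly horizontal tensor, so the contracted Bianchi identity you invoke for the mixed equation persists for all $\rho$. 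With that substitution your proof closes; what the paper's route buys is that this entire bookkeeping is replaced by a single application of an already-proved proposition.
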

\noindent
In addition, it is shown in Proposition \ref{Q-curvature} that the $Q$-curvature of any modified PW metric vanishes.

\subsubsection*{Acknowledgements}
The authors are grateful to Maciej Dunajski, Boris Kruglikov and Arman Taghavi-Chabert for valuable
discussion. JS was supported by the Czech science foundation (GA\v{C}R) under the grant GA19-06357S.
KS acknowledges funding received from the Norwegian Financial Mechanism 2014-2021, project registration number 
UMO-2019/34/H/ST1/00636.

\egroup

\section{Construction and characterization} 

In this section, we add details to the projective-to-conformal analogue of the modified PW construction sketched in the introduction.
After fixing notation and other conventions, we give auxiliary comparisons of various curvature quantities. 
This leads to a characterization of the flatness of the induced conformal structure and the structure itself, see Theorem \ref{relateFlat} and \ref{thm-char}, respectively.

\subsection{Preliminaries} \label{Prelim}

Most of the following preliminaries is taken from \cite{hsstz-walker}.
We start with a smooth manifold $M$ and pass to its (weighted) cotangent bundle, which we denote as $\wt{M}$. 
We use the standard abstract index notation so that the tensorial objects on $M$ and $\wt{M}$ are distinguished by the type of indices.
For instance, the tangent bundle $TM$ and $T\wt{M}$, as well as the space of its sections, is denoted as $\ce^A$ and $\wt\ce^a$, respectively. 
Symmetric and skew-symmetric tensors, as well as the corresponding operations, are denoted by round and square brackets, respectively, around indices.
For instance, $\al_{(AB)}\in\ce_{(AB)}$ denotes the symmetrization of a bilinear form $\al_{AB}\in\ce_{AB}$ on $M$, $\be^{ab}=\be^{[ab]}\in\wt\ce^{[ab]}$ denotes a bivector on $\wt{M}$ etc.

A projective structure on $M$ is given by an equivalence class of torsion-free affine connections, where two connections $D_A$ and $\wh{D}_A$ are projectively equivalent if there is a $1$-form $\Upsilon_A\in\ce_A$ such that
\begin{align}\label{eq-Dhat-D}
\widehat D_A\xi^B =D_A\xi^B+ \delta_{A}^B \Upsilon_{C}  \xi^C+\xi^C \Upsilon_{A} ,
\end{align}
for all $\xi^A\in\ce^A$.
Assuming the manifold $M$ is oriented and a volume form on $M$ is fixed, there is a unique connection in the projective class for which the volume form is parallel; any such connection is called \emph{special}.
Special connections are characterized by the fact that their Ricci (equivalently, Schouten) tensor is symmetric.
Adapted to the projective structure on $M$, we use appropriate parametrizations of density bundles as follows. 
For any $w\in\mbb{R}$, the density bundle of \emph{projective weight $w$} is defined as 
\begin{align}
\mc{E}(w):=\left(\wedge^n T M\right)^{-\frac{w}{n+1}} ,
\label{proj-w}
\end{align}
where $\dim M=n$. 
Weighted tensor bundles are denoted accordingly, e.g., $\ce_{A}(3)$ stays for $\ce_{A}\otimes\ce(3)$.

A conformal structure on $\wt{M}$ is given by an equivalence class of (pseudo-)Riemannian metrics, where two metrics $g_{ab}$ and $\wh{g}_{ab}$ are conformally equivalent if there is a function $f\in\wt\ce$ such that
\begin{align*}
\wh{g}_{ab} =e^{2f} g_{ab} .
\end{align*}
Similarly to \eqref{proj-w}, the density bundle of \emph{conformal weight $w$} on $\wt{M}$ is defined as
\begin{align}
\wt{\mc{E}}[w]:=\big(\wedge^{2n} T \wt{M} \big)^{-\frac{w}{2n}} ,
\label{conf-w}
\end{align}
where $\dim\wt{M}=2n$.
The conformal structure can be seen as a section of $\wt\ce_{(ab)}[2]$ so that the individual metrics from the 
conformal class correspond to  so-called \idx{scales}, i.e.\ everywhere positive sections of $\wt\ce[1]$.


The standard PW construction works as follows.
Let $D_A$ be a torsion-free affine connection on $M$ and let $\pi:T^*M\to M$ be the cotangent bundle.
Let $TT^*M=V\oplus H$ be the decomposition so that  $V\cong T^*M$ is the vertical distribution of the projection $\pi$ and $H\cong TM$ is the horizontal distribution determined by $D_A$.
The (standard) \emph{PW metric} associated to $D_A$ is the split-signature metric $\wt{g}_{ab}$ on $\wt{M}:= T^*M$ given by the natural pairing between $V$ and $H$ and requiring that both $V$ and $H$ are isotropic.
Since the distribution $V$ is parallel with respect to the corresponding Levi-Civita connection,  the PW metric is a Walker metric.

For a non-trivial symmetric bilinear form $\Phi_{AB}\in\ce_{(AB)}$ on $M$, let $\Phi_{ab}\in\wt\ce_{(ab)}$ be its pull-back to $\wt{M}$.
The PW metric can be modified so that 
\begin{align}
\ol{g}_{ab}:=\wt{g}_{ab}+\Phi_{ab}.
\label{eq-modPW}
\end{align}
The above mentioned properties change only so that the distribution $H$ is no more isotropic with respect to $\ol{g}_{ab}$.
The metric \eqref{eq-modPW} is called the \emph{modified PW metric} associated to the affine connection $D_A$ and the symmetric tensor $\Phi_{AB}$ on $M$.
Note that other names for  $\ol{g}_{ab}$ are used in the literature, e.g., it is called the Riemann extension metric in \cite{Patterson1952} and \cite{Derdzinski2009}, and the deformed Riemannian extension metric in \cite{Gilkey2009}.

Both the standard and the modified PW metric provide an identification of the tangent and cotangent bundle, $\wt\ce^a\cong\wt\ce_a$.
Per default, the indices are raised and lowered with respect to the standard PW metric $\wt{g}_{ab}$, the use of any other metric will always be mentioned explicitly.
In particular, the inverse metric of $\ol{g}_{ab}$ has the form $(\ol{g}^{-1})^{ab} = \wt{g}^{ab} - \Ph^{ab}$.

A coordinate expression of a modified PW metric is as follows.
For local coordinates $(x^A)$ on $M$, let $\Ga\ind{_A^C_B}$ be the Christoffel symbols of $D_A$ and let $(p_A)$ be the canonical fibre coordinates on $T^*M$.
Then the metric \eqref{eq-modPW} is expressed as
\begin{align}
\ol{g} =  2 \d x^A \odot \d p_A -2 \left(\Ga\ind{_A^C_B} \, p_C +\Phi_{AB}\right) \d x^A \odot \d x^B \, .
\label{formula-modPW}
\end{align}

There is a projective-to-conformal variant of the standard PW construction provided that we pass to an appropriately weighted cotangent bundle.
To do so, we assume that the manifold $M$ is oriented and the affine connection $D_A$ on $M$ is special, i.e. preserves a volume form.
This provides a trivialization of $\ce(w)$, i.e. an identification $T^*M(w)\cong T^*M$, for any $w$.
By \cite[Proposition 3.1]{hsstz-walker}, projectively equivalent torsion-free special connections on $M$ induce conformally equivalent PW metrics on $T^*M(w)$ if and only if $w=2$.
Thus, adjusting accordingly the previous setup, we have the following projectively invariant definition:

\begin{defn}
The \emph{modified conformal extension} or the \emph{modified conformal PW metric} associated to an oriented projective structure on $M$ and the symmetric weighted tensor $\Phi_{AB}\in\ce_{(AB)}(2)$ is the split-signature conformal structure on $\wt{M}:=T^*M(2)$ represented by the modified PW metric \eqref{eq-modPW} of a special torsion-free affine connection from the projective class.
\end{defn}

To investigate various quantities associated to the standard and the modified PW metric, we primarily need the relation between corresponding covariant derivatives.
By $\wt{D}_a$ and $\ol{D}_a$ we denote the Levi-Civita connection of the standard and the modified PW metric, respectively.
They are related as
\begin{align}
\ol{D}_a = \wt{D}_a + F_a\, \bullet , 
\label{wtol_F}
\end{align}
where the difference tensor $F_a{}^c{}_d \in \wt{\ce}_a{}^c{}_d$ is seen as a 1-form with values in the endomorphisms of $T\wt{M}$ and $\bullet$ denotes the algebraic action. 
This tensor can be specified as follows:

\begin{lem} \label{relateD}
Let the standard and the modified PW metric be related by \eqref{eq-modPW} and let the corresponding covariant derivatives be related by \eqref{wtol_F}.
Then
\begin{align}
F_a{}^c{}_d = \wt{D}_{(a} \Ph_{d)}{}^c - \tfrac12 \wt{D}^c \Ph_{ad} .
\label{_F}
\end{align}
In particular, $F_{acd}\in\wt\ce_{acd}$ is strictly horizontal, i.e. it is a section of $V\otimes V\otimes V$.
\end{lem}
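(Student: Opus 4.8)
The plan is to start from the universal formula for the difference of two Levi-Civita connections and then exploit the special structure of the modification tensor $\Ph_{ab}$. Since both $\wt{D}_a$ and $\ol{D}_a$ are torsion-free and metric (for $\wt{g}_{ab}$ and $\ol{g}_{ab}=\wt{g}_{ab}+\Ph_{ab}$ respectively), the difference tensor in \eqref{wtol_F} is forced to be $F_a{}^c{}_d=\tfrac12(\ol{g}^{-1})^{ce}\bigl(\wt{D}_a\Ph_{de}+\wt{D}_d\Ph_{ae}-\wt{D}_e\Ph_{ad}\bigr)$, exactly as in the classical computation of Christoffel symbols; here I use $\wt{D}_a\wt{g}_{bc}=0$, so that under $\wt{D}$ only the $\Ph$-part of $\ol{g}$ survives.

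The heart of the argument is the claim that $\wt{D}_a\Ph_{bc}$ is \emph{strictly horizontal}, i.e.\ that it vanishes whenever any one of its three slots is contracted with a vertical vector. Three inputs enter here: first, $\Ph_{ab}=\pi^*\Ph_{AB}$ is a pullback, hence horizontal, so $\Ph_{ab}W^b=0$ for every vertical $W$; second, because the standard PW metric is a Walker metric, the vertical distribution $V$ is parallel for $\wt{D}$, so $\wt{D}_aW^b$ is again vertical whenever $W$ is; third, vertical directions preserve $\Ph$, i.e.\ $\wt{D}_W\Ph=0$ for $W\in V$. Combining the first two facts and differentiating the identity $\Ph_{bc}W^c=0$ shows that contracting either $\Ph$-slot of $\wt{D}_a\Ph_{bc}$ with a vertical vector gives zero; the third fact covers the remaining (derivative) slot. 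I expect this third input, $\wt{D}_W\Ph=0$, to be the main obstacle, as it is the one statement not immediate from ``Walker $+$ pullback''. I would establish it either by a short computation of the relevant Christoffel symbols of $\wt{g}$ in the coordinates \eqref{formula-modPW} (checking that those carrying a vertical derivative index and a horizontal output index vanish), or invariantly from $\wt{D}_WX=\wt{D}_XW+[W,X]\in V$ for $W$ vertical and $X$ a horizontal lift, which rests on the fact that the bracket of a vertical field with a horizontal lift stays vertical.

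With strict horizontality of $\wt{D}\Ph$ in hand, the rest is bookkeeping. Raising the two lower indices of $\Ph_{ab}$ with $\wt{g}$ produces $\Ph^{ce}$ taking values in $V\otimes V$, since raising an index with $\wt{g}$ sends the annihilator $\mathrm{Ann}(V)$ onto $V$ (as $V$ is $\wt{g}$-isotropic); hence the correction $-\Ph^{ce}\bigl(\wt{D}_a\Ph_{de}+\wt{D}_d\Ph_{ae}-\wt{D}_e\Ph_{ad}\bigr)$ arising from $(\ol{g}^{-1})^{ce}=\wt{g}^{ce}-\Ph^{ce}$ vanishes term by term by the previous paragraph, and I may replace $(\ol{g}^{-1})^{ce}$ by $\wt{g}^{ce}$. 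Rewriting $\tfrac12\wt{g}^{ce}\bigl(\wt{D}_a\Ph_{de}+\wt{D}_d\Ph_{ae}-\wt{D}_e\Ph_{ad}\bigr)$ as $\wt{D}_{(a}\Ph_{d)}{}^c-\tfrac12\wt{D}^c\Ph_{ad}$ yields \eqref{_F}. Finally, the strict horizontality of $F_{acd}$ asserted in the lemma is immediate: each of its three terms is obtained from $\wt{D}\Ph$ by symmetrization and index lowering with $\wt{g}$, operations under which strict horizontality of $\wt{D}\Ph$ is preserved.
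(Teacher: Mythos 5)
Your proposal is correct and follows essentially the same route as the paper: the difference tensor is pinned down by torsion-freeness and $\ol{D}_a\ol{g}_{bc}=0$ via the Koszul-type formula, and the final form \eqref{_F} together with strict horizontality then comes from the strict horizontality of $\Ph_{ab}$ and the parallelity of $V$. Your write-up merely makes explicit two steps the paper leaves inside "a direct calculation" --- that $\wt{D}\Ph$ is itself strictly horizontal and that the $\Ph^{ce}$ correction in $(\ol{g}^{-1})^{ce}=\wt{g}^{ce}-\Ph^{ce}$ therefore drops out --- and both are argued correctly.
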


\begin{proof}
The formula \eqref{_F} follows from the characterizing properties of the Levi-Civita connection, namely, from $\ol{D}_a\ol{g}_{bc}=0$ and the torsion-freeness, by a direct calculation.
The rest follows from \eqref{_F}, the strict horizontality of $\Phi_{ab}$ and the parallelity of $V$.
\end{proof}

For instance, for any $v^a\in\wt\ce^a$ and $\al_a\in\wt\ce_a$, we have
\begin{align}
\begin{split}
& \ol{D}_av^b = \wt{D}_av^b + v^r\wt{D}_{(a}\Phi_{r)}{}^b - \tfrac{1}{2}v^r \wt{D}^{b}\Phi_{ar} , \\ 
& \ol{D}_a\al_b = \wt{D}_a\al_b - \al_r\wt{D}_{(a}\Phi_{b)}{}^r + \tfrac{1}{2}\al_r \wt{D}^{r}\Phi_{ab} . \label{wtolD} 
\end{split}
\end{align}
Recall that the indices are raised with respect to the standard PW metric $\wt{g}_{ab}$.

\subsection{Curvature relations} \label{Curva}

There is a tight relation between the curvature tensors of the affine connection $D_A$ and the induced PW metric 
$\wt{g}_{ab}$. 
This, for example, dictates that $\wt{g}_{ab}$ is flat, Ricci-flat and conformally flat if and only 
if $D_A$ is flat, Ricci-flat and projectively flat, respectively.
These facts easily follow from explicit relations between the initial and the induced curvatures.
A particularly simple relation, which is often used below, is that the Ricci, respectively Schouten, tensor of $\wt{g}_{ab}$ is just the pull-back of the Ricci, respectively Schouten, tensor of $D_A$.
See \cite[Remark 3.5]{hsstz-walker} and the surrounding formulas for more details.

We are going to relate the respective tensors associated to the standard and the modified PW metric.
Some of these relations can also be found in the existing literature, cf. \cite[Section 8]{Patterson1952} or \cite[Theorem 1]{Afifi1954}. 
Nevertheless, we offer a self-contained derivation of these results and, notably, a conceptual interpretation of a condition characterizing the flatness of modified conformal extensions.
The objects associated to $\wt{g}_{ab}$ and $\ol{g}_{ab}$ are generally adorned by tilde and bar, respectively.

\begin{prop} \label{relateR}
Let the standard and the modified PW metric be related by \eqref{eq-modPW}.
Then the corresponding Riemann, Ricci and Schouten tensors are related by 
\begin{align}
\begin{split}
& \ol{R}_{ab}{}^r{}_d \bar{g}_{rc} = \wt{R}_{abcd}
- \wt{D}_c \wt{D}_{[a} \Ph_{b]d} +  \wt{D}_d \wt{D}_{[a} \Ph_{b]c} - \wt{R}_{ab}{}^r{}_{[c}\Ph_{d]r} , \\
& \ol{\Ric}_{ab} = \wt{\Ric}_{ab}, \qquad \ol{\Rho}_{ab} = \wt{\Rho}_{ab}.
\end{split}
\label{eq-RP}
\end{align}
In particular, the modified PW metric is Ricci-flat if and only if the original affine connection is Ricci-flat.
\end{prop}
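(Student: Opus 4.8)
The plan is to realize $\ol D$ as a deformation $\ol D_a=\wt D_a+F_a\,\bullet$ of $\wt D$ with the explicit difference tensor $F$ of Lemma~\ref{relateD}, and to feed this into the standard formula for the curvature of a deformed connection,
\begin{align*}
\ol R_{ab}{}^c{}_d = \wt R_{ab}{}^c{}_d + 2\,\wt D_{[a}F_{b]}{}^c{}_d + 2\,F_{[a}{}^c{}_{|r|}F_{b]}{}^r{}_d .
\end{align*}
First I would discard the quadratic term. By Lemma~\ref{relateD} the tensor $F$ is strictly horizontal, i.e.\ valued in $V$ in each slot, and $V$ is $\wt g$-isotropic; hence in the contraction $F_{[a}{}^c{}_{|r|}F_{b]}{}^r{}_d$ the raised index of one factor is paired, through $\wt g$, with the lowered index of the other, matching a $V$-valued vector against a covector annihilating $V$. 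This term therefore vanishes identically, leaving $\ol R_{ab}{}^c{}_d = \wt R_{ab}{}^c{}_d + 2\wt D_{[a}F_{b]}{}^c{}_d$.

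Next I would lower the endomorphism index with $\ol g_{rc}=\wt g_{rc}+\Ph_{rc}$. Lowering the derivative term against $\Ph_{rc}$ kills it: $\wt D_{[a}F_{b]}{}^r{}_d$ is still $V$-valued in $r$ (as $V$ is $\wt D$-parallel) and $\Ph_{rc}$ is horizontal, hence annihilates $V$; lowering $\wt R_{ab}{}^r{}_d$ against $\wt g$ reproduces $\wt R_{abcd}$, while lowering it against $\Ph_{rc}$ produces a genuine curvature--$\Ph$ contribution. It then remains to substitute the explicit $F_{acd}=\wt D_{(a}\Ph_{d)c}-\tfrac12\wt D_c\Ph_{ad}$ into $2\wt D_{[a}F_{b]cd}$ and to commute the resulting second covariant derivatives into the order $\wt D_c\wt D_{[a}\Ph_{b]d}$ and $\wt D_d\wt D_{[a}\Ph_{b]c}$ occurring on the right-hand side, each commutator being rewritten via the Ricci identity as a contraction of $\wt R$ with $\Ph$. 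I expect the hard part to be precisely this bookkeeping: collecting the curvature--$\Ph$ term from the lowering together with the several commutator contributions and checking, with the first Bianchi identity and the symmetries of the purely horizontal PW curvature $\wt R$, that they collapse to the single term $-\wt R_{ab}{}^r{}_{[c}\Ph_{d]r}$.

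For the Ricci relation in \eqref{eq-RP} I would trace the mixed identity over the first and the endomorphism index, giving $\ol\Ric_{bd}=\wt\Ric_{bd}+\wt D_aF_b{}^a{}_d-\wt D_bF_a{}^a{}_d$. Both corrections vanish by horizontality: tracing yields $F_a{}^a{}_d=\tfrac12\wt D_d(\wt g^{pq}\Ph_{pq})=0$, since the $\wt g$-trace of the horizontal tensor $\Ph$ vanishes ($\Ph$ annihilates the isotropic $V$), while $\wt D_aF_b{}^a{}_d$ is the divergence of a $V$-valued tensor of pullback type and vanishes by the strict horizontality of Lemma~\ref{relateD} together with the explicit form of $\wt D$ in the $V\oplus H$ splitting. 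Hence $\ol\Ric=\wt\Ric$. For the Schouten tensors I would note that both metrics are scalar-flat: $\wt{\Sc}=\wt g^{ab}\wt\Ric_{ab}=0$ and, using $(\ol g^{-1})^{ab}=\wt g^{ab}-\Ph^{ab}$ with $\ol\Ric=\wt\Ric$, also $\ol{\Sc}=(\wt g^{ab}-\Ph^{ab})\wt\Ric_{ab}=0$, both contractions vanishing because $\wt\Ric$ is horizontal while $\wt g^{ab}$ and $\Ph^{ab}$ pair only isotropic directions. With vanishing scalar curvature the Schouten tensor reduces to a fixed multiple of the Ricci tensor in the common dimension $2n$, so $\ol\Rho=\wt\Rho$ follows at once. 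Finally, the equivalence of Ricci-flatness is immediate from $\ol\Ric=\wt\Ric$ and the already recorded fact that $\wt\Ric$ is the pullback of the Ricci tensor of $D_A$: one has $\ol\Ric=0$ if and only if $\Ric$ of $D_A$ vanishes.
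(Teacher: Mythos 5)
Your proposal is correct and follows essentially the same route as the paper's own proof: the deformed-connection curvature formula with the quadratic term killed by the strict horizontality of $F$ from Lemma \ref{relateD}, lowering with $\ol{g}_{rc}=\wt{g}_{rc}+\Ph_{rc}$ and reorganising the second derivatives of $\Ph$ via the Ricci identity, and then horizontality/isotropy arguments for the Ricci and Schouten relations. The additional justifications you supply (why $F_{[a}\bullet F_{b]}\bullet=0$, why the trace terms $F_a{}^a{}_d$ and $\wt{D}_aF_b{}^a{}_d$ vanish, and the scalar-flatness of both metrics) are all sound and merely make explicit what the paper leaves as ``follows easily''.
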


\begin{proof}
With the relation \eqref{wtol_F}, it follows that 
$\ol{R}_{ab}{}^c{}_d = \wt{R}_{ab}{}^c{}_d + 2\wt{D}_{[a} F_{b]}{}^c{}_d$ since $F_{[a} \bullet F_{b]} \bullet =0$.
Substituting \eqref{_F} and contracting with $\ol{g}_{ab}$, the relation of Riemann tensors follows after some computation.
The Ricci and the Schouten relations follow easily, taking into account that $\Phi_{ab}$ is strictly horizontal.
The last statement follows from $\ol{\Ric}_{ab} = \wt{\Ric}_{ab}$ and the characterization of the Ricci-flatness of $\wt{g}_{ab}$.
\end{proof}

The basic curvature quantity in conformal geometry is the Weyl tensor, the conformally invariant part of the Riemann tensor.
For the metric $\ol{g}_{ab}$, it can be expressed as 
\begin{align}
\ol{W}_{ab}{}^r{}_d \bar{g}_{rc} &= \ol{R}_{ab}{}^r{}_d \ol{g}_{rc} 
- 4 \Proj_{[ab][cd]} \bigl( \ol{g}_{ac}  \ol{\Rho}_{bd} \bigr) ,
\label{eq-WRP}
\end{align}
where $\Proj_{[\cdot\,\cdot][\cdot\,\cdot]}$ denotes skew-symmetrization over the embraced indices.
Substituting \eqref{eq-modPW} and \eqref{eq-RP} allows to express the right-hand side of \eqref{eq-WRP} in terms of $\Phi_{ab}$ and tensors associated to $\wt{g}_{ab}$.
This can further be rearranged using the second BGG operator in the sequence
\begin{align*}
\wt{\ce}_{a}[2] \mathop{\longrightarrow}^{\wt\B_1} \wt{\ce}_{(ab)_0}[2] \mathop{\longrightarrow}^{\wt\B_2} \wt{\ce}_{[ab][cd]_0}[2] \longrightarrow \cdots  
\end{align*}
The first operator in this sequence is the conformal Killing operator, $\wt\B_1(v)_{ab} = \wt{D}_{(a} v_{b)_0}$, which is implicitly present in section \ref{Symm}.
The second operator is 
\begin{align} \label{wtB}
\begin{split}
\wt{\B}_2(\Phi)_{abcd} 
& = \Proj_{\boxplus_0} \bigl( \wt{D}_a \wt{D}_c\Phi_{bd}+\wt{\Rho}_{ac} \Phi_{bd}\bigr) = \\
& = \Proj_{[ab][cd]_0} \bigl( \wt{D}_a \wt{D}_c \Phi_{bd} + \wt{\Rho}_{ac} \Phi_{bd} + \tfrac14 \wt{W}_{ab}{}^r{}_c \Ph_{dr} - \tfrac14 \wt{W}_{cd}{}^r{}_a \Ph_{br} \bigr) ,
\end{split}
\end{align}
where $\Proj_{\boxplus_0}$ denotes the trace-free part of the `window' symmetry corresponding to the indicated Young tableau.
A short general introduction to the BGG theory and details for relevant projective BGG operators is given in appendix \ref{app-A}.
The role of the second BGG operator in measuring the change of the harmonic curvature (which is the Weyl curvature in conformal geometry) under deformations of parabolic geometries (which are represented by $\Phi_{ab}$ in our situation) is in general described in \cite[Theorem 3.6]{Cap2005b}.

\begin{prop} \label{relateW}
Let the standard and the modified PW metric be related by \eqref{eq-modPW}.
Then the corresponding Weyl tensors are related by 
\begin{align} \label{Weq}
\ol{W}_{ab}{}^r{}_d \ol{g}_{cr} = \wt{W}_{abcd} - 2\wt{\B}_2(\Phi)_{abcd}
- \tfrac{1}{2} \bigl( \wt{W}_{ab}{}^r{}_{[c} \Phi_{d]r}+\wt{W}_{cd}{}^r{}_{[a}\Phi_{b]r} \bigr) ,
\end{align}
where $\wt\B_2$ is the second BGG operator \eqref{wtB}.
In particular, the condition 
\begin{align} \label{intcon}
\ol{W}_{ab}{}^c{}_d  \, v^a w^d =0 
\end{align}
holds for all vertical vectors $v^a, w^d\in V$.
\end{prop}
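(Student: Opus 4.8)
The plan is to obtain \eqref{Weq} by substituting Proposition \ref{relateR} into the conformal Weyl decomposition \eqref{eq-WRP}, and then to read off \eqref{intcon} by contracting the result with vertical vectors. First I would write \eqref{eq-WRP} for $\ol g$ and insert the three outputs of Proposition \ref{relateR}, namely the formula for $\ol R_{ab}{}^r{}_d\ol g_{cr}$, the identity $\ol\Rho_{bd}=\wt\Rho_{bd}$, and $\ol g_{ac}=\wt g_{ac}+\Phi_{ac}$. The Schouten term then splits into a $\wt g$-part and a $\Phi$-part; applying \eqref{eq-WRP} to the standard metric $\wt g$ turns the $\wt g$-part into $\wt W_{abcd}-\wt R_{abcd}$, and the two copies of $\wt R_{abcd}$ cancel. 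What remains is $\ol W_{ab}{}^r{}_d\ol g_{cr}=\wt W_{abcd}+E_{abcd}$ with the $\Phi$-linear remainder
\[
E_{abcd}=-\wt D_c\wt D_{[a}\Phi_{b]d}+\wt D_d\wt D_{[a}\Phi_{b]c}-\wt R_{ab}{}^r{}_{[c}\Phi_{d]r}-4\Proj_{[ab][cd]}(\Phi_{ac}\wt\Rho_{bd}).
\]

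The core of the proof is to identify $E_{abcd}$ with $-2\wt\B_2(\Phi)_{abcd}-\tfrac12(\wt W_{ab}{}^r{}_{[c}\Phi_{d]r}+\wt W_{cd}{}^r{}_{[a}\Phi_{b]r})$ using the explicit form \eqref{wtB} of the second BGG operator. I would reorder the iterated covariant derivatives in $E$ into the canonical order $\wt D_a\wt D_c\Phi_{bd}$ of \eqref{wtB} by the Ricci identity; the resulting commutators, split via $\wt R_{abcd}=\wt W_{abcd}+4\Proj_{[ab][cd]}(\wt g_{ac}\wt\Rho_{bd})$ into their Weyl and Schouten parts, combine with the existing $\wt R_{ab}{}^r{}_{[c}\Phi_{d]r}$ and $\Proj_{[ab][cd]}(\Phi_{ac}\wt\Rho_{bd})$ terms to produce exactly the two Weyl-contraction terms of \eqref{Weq} and to match the $\wt\Rho_{ac}\Phi_{bd}$ term of \eqref{wtB}; here the coefficient $-4$ is reconciled with the $-2$ from \eqref{wtB} both by the commutator contribution and by the window identity $\Proj_{[ab][cd]}(\Phi_{ac}\wt\Rho_{bd})=\Proj_{[ab][cd]}(\wt\Rho_{ac}\Phi_{bd})$. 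Passing between the full window projection and its trace-free part $\Proj_{[ab][cd]_0}$ is harmless because $\Phi$ is $\wt g$-trace-free (being strictly horizontal, Lemma \ref{relateD}), so the trace corrections on both sides agree. I expect this coefficient-and-projection bookkeeping---getting the factors $-\tfrac14$, $-\tfrac12$ and the factor $2$ right while juggling Young symmetrizers and Ricci commutators---to be the main obstacle; conceptually it is the instance of \cite[Theorem 3.6]{Cap2005b} governing the change of the harmonic curvature under the deformation $\Phi$.

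For the ``in particular'' statement I would contract $\ol W_{ab}{}^r{}_d\ol g_{cr}=\wt W_{abcd}+E_{abcd}$ with vertical $v^a,w^d\in V$; since $\ol g$ is nondegenerate this is equivalent to \eqref{intcon}. The term $\wt W_{abcd}v^aw^d$ vanishes because the standard PW metric is the $\Phi=0$ conformal extension, for which the integrability condition holds \cite[Theorem 1]{hsstz-walker}. For $E_{abcd}v^aw^d$ I would treat the four terms of $E$ separately, using three structural facts: $\Phi$ and $\wt\Rho$ are strictly horizontal, hence annihilate every vertical vector; $V$ is $\wt D$-parallel, so its curvature and iterated derivatives of vertical fields stay vertical; and $\Phi$, being a pullback, is parallel along $V$, i.e.\ $v^a\wt D_a\Phi_{bc}=0$ for $v\in V$ (immediate from the coordinate form \eqref{formula-modPW}, the relevant Christoffel symbols vanishing). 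The Schouten term dies because in every summand $v^a$ or $w^d$ meets $\Phi$ or $\wt\Rho$; the curvature term dies because $\wt R_{ab}{}^r{}_dw^d$ is again vertical in $r$ and is then killed against $\Phi$, while the companion summand carries $\Phi_{dr}w^d=0$; in the two second-derivative terms the subterms where $v^a$ or $w^d$ hits a $\Phi$-index vanish by horizontality together with parallelity of $V$, and the one remaining subterm, in which both $v^a$ and $w^d$ sit on derivative slots, vanishes precisely by $v^a\wt D_a\Phi=0$. Hence $E_{abcd}v^aw^d=0$ and \eqref{intcon} follows.
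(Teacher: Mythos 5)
Your proposal is correct and takes essentially the same route as the paper: you substitute Proposition \ref{relateR} into \eqref{eq-WRP} and reorganize the $\Phi$-linear remainder via Ricci identities and the trace-free window projection into the form \eqref{wtB}, which is precisely the paper's ``after some computation'' step, with the trace-freeness of the strictly horizontal $\Phi$ used in the same way. The only cosmetic difference is the last step, where the paper reads \eqref{intcon} directly off \eqref{Weq} (using $\wt{W}_{abcd}v^aw^d=0$ together with the fact that $\wt{\B}_2(\Phi)$ is a pullback, hence strictly horizontal), whereas you contract the intermediate expression $E_{abcd}$ term by term using the parallelism of $V$ and the horizontality of $\Phi$, $\wt{\Rho}$ and the iterated derivatives of $\Phi$ --- an equally valid verification of the same facts.
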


\begin{proof}
Substituting \eqref{eq-modPW} and \eqref{eq-RP} into \eqref{eq-WRP} yields
\begin{align*}
\ol{W}_{ab}{}^r{}_d \bar{g}_{rc} &= \wt{W}_{abcd}
- 2 \Proj_{[ab][cd]} \bigl( \wt{D}_c \wt{D}_{[a} \Ph_{b]d} \bigr) - \wt{R}_{ab}{}^r{}_{[c}\Ph_{d]r}
-  4 \Proj_{[ab][cd]} \bigl( \Ph_{ac} \wt{\Rho}_{bd} \bigr) .
\end{align*}
The tensor $\Phi_{ab}$ is strictly horizontal, hence it is trace-free, $\Phi_{ab}\in\wt{\ce}_{(ab)_0}[2]$.
Using \eqref{wtB}, the formula \eqref{Weq} follows after some computation.
The condition \eqref{intcon} follows immediately from \eqref{Weq} and $\wt{W}_{abcd} \, v^a w^d =0$, which is one of the characteristic conditions of the standard PW metric, see \cite[Theorem~1]{hsstz-walker}.
\end{proof}

The conformal BGG operator \eqref{wtB} is visibly related to its projective counterpart in \eqref{BGGce_A2}.
Since  $\Ph_{ab}$ and $\wt{\Rho}_{ab}$ is the pullback of $\Ph_{AB}$ and $\Rho_{AB}$, respectively, also the tensor $\wt{\B}_2(\Phi)_{abcd}$ is the pullback of $\B_2(\Ph)_{ABCD}$.
Altogether, we conclude with

\begin{thm} \label{relateFlat}
Let the modified PW metric $\ol{g}_{ab}$ be induced by a special affine connection $D_A$ and a modification tensor $\Phi_{AB}$.
Then $\ol{g}_{ab}$ is conformally flat if and only if $D_A$ is projectively flat and $\B_2(\Phi)=0$, where $\B_2: \ce_{(AB)}(2) \to  \ce_{[AB][CD]}(2)$  is the second BGG operator \eqref{BGGce_A2}.
\end{thm}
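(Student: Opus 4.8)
The plan is to reduce conformal flatness to the vanishing of the Weyl tensor and then read everything off the curvature relation \eqref{Weq} of Proposition \ref{relateW}. Since $\dim\wt{M}=2n\geq 4$, the conformal class of $\ol{g}_{ab}$ is flat if and only if its Weyl tensor $\ol{W}_{ab}{}^c{}_d$ vanishes. Two further ingredients are used throughout: as recalled at the beginning of this subsection, $\wt{g}_{ab}$ is conformally flat (that is, $\wt{W}=0$) precisely when $D_A$ is projectively flat; and, as noted just above the theorem, $\wt{\B}_2(\Phi)_{abcd}$ is the pullback of $\B_2(\Ph)_{ABCD}$, so that $\wt{\B}_2(\Phi)=0$ if and only if $\B_2(\Ph)=0$.

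For the implication ``$\Leftarrow$'', I would assume $D_A$ projectively flat and $\B_2(\Ph)=0$. Then $\wt{W}=0$ and $\wt{\B}_2(\Phi)=0$, and every term on the right-hand side of \eqref{Weq} vanishes, the two correction terms because each carries a factor of $\wt{W}$. Hence $\ol{W}=0$ and $\ol{g}_{ab}$ is conformally flat.

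For the converse ``$\Rightarrow$'', I would assume $\ol{W}=0$, so that \eqref{Weq} reads
\begin{align*}
\wt{W}_{abcd} = 2\wt{\B}_2(\Phi)_{abcd} + \tfrac12\bigl(\wt{W}_{ab}{}^r{}_{[c}\Phi_{d]r}+\wt{W}_{cd}{}^r{}_{[a}\Phi_{b]r}\bigr).
\end{align*}
It then suffices to deduce $\wt{W}=0$: once this holds the correction terms drop out, the identity collapses to $0=-2\wt{\B}_2(\Phi)$, and so $\wt{\B}_2(\Phi)=0$, hence $\B_2(\Ph)=0$ by the pullback identification, while $\wt{W}=0$ is exactly projective flatness of $D_A$. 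To obtain $\wt{W}=0$ I would decompose this identity according to the splitting $T\wt{M}=V\oplus H$, equivalently by homogeneity in the fibre coordinate $p_A$. The map $\wt{W}\mapsto\tfrac12\bigl(\cdots\bigr)$ contracts $\wt{W}$ with the strictly horizontal tensor $\Phi$ through the isotropic pairing $\wt{g}_{ab}$, which interchanges $V$ and $H$; I expect this map to strictly lower a natural filtration degree and hence to be nilpotent, so that $\id-\tfrac12(\cdots)$ is unipotent. The component of the identity of top homogeneity in $p_A$ should then receive no contribution from $\wt{\B}_2(\Phi)$ (which is a pullback, hence of degree zero) nor, after raising indices against $\wt{g}_{ab}$, from the correction terms — the crucial point being that raising an index of an all-horizontal component of $\wt{W}$ produces a vertical index annihilated by $\Phi$. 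This forces the top component of $\wt{W}$ to vanish, and descending through the grading, using at each stage the standard Patterson--Walker degeneracy $\wt{W}_{abcd}v^aw^d=0$ for vertical $v^a,w^d$ together with $\Phi_{ab}v^a=0$, forces the remaining components of $\wt{W}$ to vanish as well.

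The main obstacle is precisely this disentangling in the converse direction. Separating the three groups of terms requires the explicit component structure of the standard Patterson--Walker Weyl tensor from \cite{hsstz-walker} — concretely, that its projective-Cotton part is linear in $p_A$ while its projective-Weyl part is independent of $p_A$ and carries a single vertical index — so that one can verify no correction term reproduces the top-homogeneity part of $\wt{W}$ and that the twisting by $\Phi$ is genuinely nilpotent. Once this bookkeeping is in place the argument is purely algebraic, and the stated equivalence with ``$D_A$ projectively flat and $\B_2(\Ph)=0$'' follows.
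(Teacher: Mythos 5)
Your proposal is correct and takes essentially the same route as the paper: both directions are read off from \eqref{Weq}, using that $\wt{W}=0$ is equivalent to projective flatness and that $\wt{\B}_2(\Phi)$ is the pullback of $\B_2(\Phi)$, with the converse resting on the separation of the non-horizontal part of $\wt{W}$ from the remaining terms. The filtration/nilpotency argument you sketch (the corrections annihilate the strictly horizontal part of $\wt{W}$, and $\wt{\B}_2(\Phi)$ has degree zero in $p_A$) is exactly what the paper packages, via \cite[equation (32)]{hsstz-walker}, into the single observation that the difference tensor $\ol{W}-\wt{W}$ is strictly horizontal.
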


\begin{proof}
For the standard PW metric, the relation between the projective and the conformal Weyl tensor, $W$ and $\wt{W}$, is described in \cite[equation (32)]{hsstz-walker}. 
In particular, $\wt{W}=0$ if and only if $W=0$.
From that relation and \eqref{Weq} we see that difference tensor $\ol{W}-\wt{W}$ is strictly horizontal. 
Hence, vanishing of $\ol{W}$ is equivalent to vanishing of both $\wt{W}$ and $\wt\B_2(\Phi)$.
Since $\wt\B_2(\Phi)$ is just the pullback of $\B_2(\Phi)$, the claim follows.
\end{proof}

\subsection{Characterization} \label{Char}

Our next aim is to provide a local characterization of split-signature conformal structures that arise as modified conformal extensions of projective structures.
We have already observed  that for such conformal structures the integrability condition \eqref{intcon} on the Weyl tensor holds.
Another ingredient needed for the characterization is a pure twistor spinor that emerges as follows.

The vertical distribution $V\subset T\wt{M}$ of the projection $\wt{M}\to M$ can always be represented by a pure spinor field $\chi$ such that $V=\ker\chi$.
In the case of standard conformal PW metrics, the spinor field can be chosen so that it satisfies the twistor spinor equation, which is a rather restrictive condition, see \cite[Theorem~1]{hsstz-walker}.

In fact, for a pure twistor spinor $\chi$ with an integrable kernel,  there are metrics in the conformal class for which $\chi$ is parallel and any two such metrics are related by a conformal factor constant along the leaves of  $\ker\chi$, see \cite[Proposition 4.2]{hsstz-walker}.
We will use this fact repeatedly in the article.

Investigating the influence of the modification gives the following 

\begin{lem} \label{lem-chi}
A modified conformal PW metric admits a pure twistor spinor $\chi$ such that $\ker\chi=V$.
\end{lem}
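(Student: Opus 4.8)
The plan is to produce $\chi$ as the canonical pure spinor attached to the vertical distribution $V$ and to check that the modification destroys neither its purity nor the twistor condition. The starting point is \cite[Theorem 1]{hsstz-walker}, by which the standard PW metric $\wt g_{ab}$ carries a pure spinor $\chi$ with $\ker\chi=V$; since $D_A$ is special and $V$ is $\wt D$-parallel, $\chi$ may be normalized to be $\wt D$-parallel (cf.\ \cite[Proposition 4.2]{hsstz-walker}). Because $\Phi_{ab}$ is strictly horizontal it annihilates $V$, so $\ol g_{ab}v^av^b=0$ for every $v\in V$ and $V$ is a maximal isotropic subspace for $\ol g_{ab}$ as well; moreover $\wt g_{ab}$ and $\ol g_{ab}$ induce one and the same pairing between $V$ and $H$. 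Modelling the spinor module on $\wedge^\bullet H$, with vertical vectors acting by interior multiplication through this common pairing, identifies the spinor bundles of the two metrics and exhibits $\chi$ (the generator of $\wedge^0 H$) as a pure spinor for $\ol g_{ab}$ too, still with $\ker\chi=V$. Note that $V$, being the vertical bundle of the fibration $\wt M\to M$, is automatically integrable.

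It remains to see that $\chi$ is parallel, and hence twistor, for the modified Levi-Civita connection $\ol D_a$. By \eqref{wtol_F} and Lemma \ref{relateD} one has $\ol D_a=\wt D_a+F_a\bullet$ with $F_a{}^c{}_d$ strictly horizontal, i.e.\ a section of $V\otimes V\otimes V$. On spinors only the $\so$-component of $F_a$ acts, and strict horizontality forces this component to lie in $\wedge^2V$; explicitly its spin action is a combination of Clifford products $\ga^c\ga^d$ with both factors vertical. As $\ker\chi=V$ says precisely that Clifford multiplication by any vertical vector annihilates $\chi$, we obtain $F_a\bullet\chi=0$ and therefore
\begin{align*}
\ol D_a\chi=\wt D_a\chi+F_a\bullet\chi=0 .
\end{align*}
A parallel spinor automatically satisfies the twistor equation, so $\chi$ is a pure twistor spinor for $\ol g_{ab}$ with $\ker\chi=V$, which proves the lemma.

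I expect the principal obstacle to be the verification, at the level of the spinor bundle, that the two metrics share the same pure spinor and that the spin action of the non-skew difference tensor $F_a$ reduces---after projection to $\so$ with respect to the relevant metric---to an element of $\wedge^2V$ given by vertical Clifford products, all compatibly with the conformal density weight carried by $\chi$. Once this algebraic fact is secured the vanishing $F_a\bullet\chi=0$ is immediate, and the remainder of the argument is formal.
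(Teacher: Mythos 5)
Your proof is correct and follows essentially the same route as the paper: both take the parallel pure spinor $\chi$ of the standard PW metric and use the strict horizontality of the difference tensor $F_a{}^c{}_d$ from Lemma \ref{relateD} to conclude that $\chi$ remains parallel, hence twistor, for $\ol{D}_a$. You merely spell out the step the paper leaves implicit, namely the identification of the spinor bundles and the fact that the skew part of $F_a$ lies in $V\otimes\wedge^2 V$ and therefore annihilates $\chi$ via vertical Clifford multiplication.
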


\begin{proof}
Let $\chi$ be the twistor spinor of the standard conformal PW metric such that $\ker\chi=V$.
As mentioned just before the Lemma, there is a scale for which $\chi$ is parallel.
By Lemma \ref{relateD}, it follows that $\chi$ is parallel also with respect to the modified PW metric.
In particular, it satisfies the corresponding twistor spinor equation.
\end{proof}

We are now ready to locally characterize modified conformal extensions of projective structures.
The proof is based on an analogous result for modified pseudo-Riemannian extensions of affine structures due to \cite{Afifi1954} and \cite{Derdzinski2009}.

\begin{thm} \label{thm-char} 
Locally, a conformal structure of split signature is a modified conformal extension of a projective structure if and only if the following properties are satisfied:
\begin{enumerate}[(a)]
\item there is a pure twistor spinor $\chi$  with integrable kernel $\ker\chi$,
\item the following integrability condition holds
\begin{align}\label{eq-weylcond}
{W}_{ab}{}^c{}_d \, v^a w^d & =0 , \quad \text{for all $v^a, w^d\in\ker\chi$} ,
\end{align}
where ${W}_{ab}{}^c{}_d$ is the conformal Weyl curvature.
\end{enumerate}
\end{thm}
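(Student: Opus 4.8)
**The plan is to prove both directions of the equivalence, with the forward direction largely assembled from lemmas already established and the converse being the substantive part.**

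For the forward implication, suppose the conformal structure is a modified conformal extension. Condition (a) is then immediate from Lemma \ref{lem-chi}, which furnishes a pure twistor spinor $\chi$ with $\ker\chi=V$, and $V$ is integrable since it is the vertical distribution of a fibration. Condition (b) is exactly the integrability condition \eqref{intcon} established in Proposition \ref{relateW}, using $\ker\chi=V$. So the forward direction requires essentially no new work beyond citing these results.

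For the converse, I would start from a conformal structure satisfying (a) and (b) and reconstruct the projective data. Since $\chi$ is a pure twistor spinor with integrable kernel, I invoke \cite[Proposition 4.2]{hsstz-walker} to pass to a metric $\ol g$ in the conformal class whose Levi-Civita connection makes $\chi$ parallel; then $V:=\ker\chi$ is a parallel isotropic distribution, so $\ol g$ is a Walker metric with respect to $V$. The integrability of $V$ gives a local fibration $\wt M\to M$ onto an $n$-manifold $M$ with $V$ as its vertical bundle, and purity forces $V$ to be maximally isotropic of rank $n$. The goal is then to show that $\ol g$ takes the modified PW form \eqref{formula-modPW} for a suitable affine connection $D_A$ on $M$ and a symmetric tensor $\Ph_{AB}$, and that the projective class of $D_A$ together with the weighted tensor $\Ph_{AB}$ is well defined. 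Here I would lean on the analogous pseudo-Riemannian statements of \cite{Afifi1954} and \cite{Derdzinski2009}, which already characterize when a Walker metric is a (modified) Riemannian extension: the parallel isotropic $V$ and Walker coordinates produce a metric of the shape \eqref{formula-modPW} with the lower block a symmetric tensor $S_{AB}$ on the base, and one splits $S_{AB}$ into its connection part ($\Ga\ind{_A^C_B}p_C$, recovering $D_A$) and its residual part ($\Ph_{AB}$, the modification). The Weyl condition (b) is what guarantees that the residual part descends to a well-defined tensor on the base rather than mixing with the fibre coordinates in an inadmissible way.

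\textbf{The main obstacle is the converse's bookkeeping of weights and the projective-versus-affine ambiguity.} Passing from the affine/pseudo-Riemannian picture of \cite{Afifi1954,Derdzinski2009} to the conformal/projective one requires checking that the freedom in choosing the metric $\ol g$ within its conformal class (constant along the leaves of $\ker\chi$, per \cite[Proposition 4.2]{hsstz-walker}) corresponds exactly to the freedom of choosing a special connection within the projective class and rescaling the fibre densities, so that the output is a genuine oriented projective structure together with a weighted $\Ph_{AB}\in\ce_{(AB)}(2)$ rather than unweighted data. In particular one must verify that the weight $w=2$ emerges automatically, using \cite[Proposition 3.1]{hsstz-walker}, and that condition (b) is not merely necessary but precisely the obstruction distinguishing modified extensions among all Walker metrics with the given spinor. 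I would expect to handle this by working in Walker coordinates adapted to $\chi$, comparing the curvature identity of Proposition \ref{relateW} with the abstract deformation result \cite[Theorem 3.6]{Cap2005b}, and confirming that no further integrability beyond \eqref{eq-weylcond} is needed.
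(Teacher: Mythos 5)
Your proposal is correct and follows essentially the same route as the paper: the forward direction is assembled from Lemma \ref{lem-chi} and Proposition \ref{relateW}, and the converse passes via \cite[Proposition 4.2]{hsstz-walker} to a metric making $\chi$ parallel, then invokes the Afifi/Derdzinski characterization of modified Riemannian extensions, and finally handles the projective and weight bookkeeping exactly as you outline. The one step you leave implicit is the hinge that makes \cite[Theorem 4.5]{Derdzinski2009} applicable: once $\chi$ is parallel the Schouten tensor is strictly horizontal, so the Weyl condition (b) is equivalent to the Riemann curvature condition $R_{ab}{}^c{}_d\,v^a w^d=0$ for $v,w\in\ker\chi$, which is precisely the obstruction to the Levi-Civita connection descending to an affine connection on the local leaf space.
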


\begin{proof}
By Proposition \ref{relateW} and Lemma \ref{lem-chi},  a conformal spin structure that arises as a modified conformal extension satisfies conditions (a) and (b). 

For the converse, let a split-signature conformal spin structure on $\wt{M}$ satisfy (a) and (b) and let $M$ be the local leaf space of the integrable distribution $\ker\chi \subset T\wt{M}$.
Here and below, all accounts are local and we implicitly assume that any neighborhood shrinks as needed.
By \cite[Proposition 4.2]{hsstz-walker}, there is a metric $g_{ab}$ in the conformal class such that $D_a\chi=0$, where $D_a$ is the Levi-Civita connection of $g_{ab}$.
Consequently, the Schouten tensor $\Rho_{ab}$ of $g_{ab}$ is strictly horizontal.
From the relation among the Weyl, Riemann and Schouten tensors, cf. \eqref{eq-WRP}, it follows that the integrability condition \eqref{eq-weylcond} is equivalent to 
\begin{align}\label{eq-riemcond}
R_{ab}{}^c{}_d \, v^a w^d & =0 , \quad \text{for all $v^a, w^d\in\ker\chi$} ,
\end{align}
where $R_{ab}{}^c{}_d$ is the Riemann curvature of $g_{ab}$.
The latter condition is the standard obstruction for the connection $D_a$ to descend to an affine connection $D_A$ on the local leaf space $M$.
It follows from \cite[Theorem 4.5]{Derdzinski2009} that a section of the projection $\wt{M}\to M$ (interpreted as a zero section) provides a local diffeomorphism $\wt{M}\cong T^*M$ under which the metric $g_{ab}$ corresponds to a modified PW metric of the affine connection $D_A$.

From \cite[Proposition 4.2]{hsstz-walker} we also know that any two metrics leaving $\chi$ parallel are related by a conformal factor constant along the leaves of $\ker\chi$.
The transformation of the Levi-Civita connections under such rescaling then shows that the descended affine connections on $M$ are projectively related, cf. the proof of \cite[Proposition 4.5]{hsstz-walker}.
Any descended connection is necessarily special, since the Ricci tensor is symmetric.
The corresponding volume form provides a trivialization of any density bundle so, in particular, an identification $T^*M\cong T^*M(2)$.
Altogether, the conformal spin structure on $\wt{M}$ satisfying (a) and (b) determines a projective structure on the local leaf space $M$ and, under the local identification $\wt{M}\cong T^*M(2)$,  its modified conformal extension corresponds to the initial conformal structure.
\end{proof}

\begin{rema} \label{rem-char}
Comparing the current characterization with the one for standard conformal PW metrics in \cite[Theorem~1]{hsstz-walker}, we see that the latter are distinguished by the presence of a conformal Killing field $k\in\ker\chi$ satisfying 
\begin{align}\label{kchi}
\mc{L}_k\chi=-\tfrac{1}{2}(n+1)\chi ,
\end{align}
where $n$ is the dimension of the underlying projective manifold.
After a modification, the vector field $k$ still satisfies \eqref{kchi} but need not be a conformal Killing field; 
the conformal Killing operator is related to the modification term $\Phi$ as follows:
\begin{align} \label{S2f}
\ol{D}_{(a}k_{b)_0} =-\Phi_{ab} ,
\end{align}
where $\ol{D}_a$ is the Levi-Civita connection of the modified PW metric $\ol{g}_{ab}=\wt{g}_{ab}+\Phi_{ab}$ and the trace-free part is taken with respect to $\ol{g}_{ab}$.
These facts are related to the property \eqref{k-homothety} recalled below and the transformation formula \eqref{wtolD}.
\end{rema}

The field $k\in\ker\chi$ can be changed so that the conditions \eqref{kchi} and \eqref{S2f} still hold.
Indeed, putting 
\begin{align}
k'_a := k_a+\al_a, \quad
\Phi'_{AB} := \Phi_{AB}^{} - D_{(A}\al_{B)},
\label{eq-k-Phi}
\end{align}
where $\al_a \in \wt{\mc{E}}_a[2]$ is the pull-back of $\al_A\in\mc{E}_A(2)$ with respect to the projection $\wt{M}\to M$, the Lie derivative is $\mc{L}_{k'}\chi=-\tfrac{1}{2}(n+1)\chi$, since $\wt{D}_a \al_b$ is strictly horizontal.
Further, one easily verifies that  $\ol{D}_{(a}^{}k'_{b)_0} =-\Phi'_{ab}$, where $\Phi'_{ab} \in \wt{\mc{E}}_{(ab)}[2]$ is the pull-back of $\Phi'_{AB}\in\ce_{(AB)}(2)$.
The freedom in the choice of $\al_A\in\mc{E}_A(2)$ corresponds to the freedom in the choice of local sections of the projection $\wt{M}\to M$ as in the proof of Theorem \ref{thm-char}.
%

Altogether, two modified conformal PW metrics of the same projective structure whose modification tensors are related by \eqref{eq-k-Phi} are basically comparable.
Note that the difference $\Phi_{AB}-\Phi'_{AB}=D_{(A}\al_{B)}$ is the image of the first BGG operator \eqref{BGGce_A2} on $\al_A\in\mc{E}_A(2)$.
In particular, the current observations matches nicely with Theorem \ref{relateFlat}.

\subsection{Further conventions}

In the rest of this section, we collect necessary spinor-related notions that are repeatedly used below.
This is a digest of \cite[section~2]{hsstz-walker}, which is based on a general calculus developed in \cite{Taghavi-Chabert2012}.

Let $D_A$ be a special torsion-free affine connection on $M$, 
let $\wt{g}_{ab}$ be the induced PW metric on $\wt{M}$ and let $\wt{\mc{S}}_+$ and $\wt{\mc{S}}_-$ be the irreducible spinor bundles.
To distinguish, we decorate the respective sections with primed and unprimed capital indices.
In particular, the related gamma matrices are denoted as $\gamma\ind{_a^{B'}_A} \in \wt\ce_a\otimes \wt{\mc{S}}_+ \otimes \wt{\mc{S}}_-^*$ and $\gamma\ind{_a^B_{A'}}\in \wt\ce_a\otimes \wt{\mc{S}}_- \otimes \wt{\mc{S}}_+^*$.
The above mentioned pure spinor field annihilating $V\subset T\wt{M}$ is written as $\chi^{A'}\in\wt{\mc{S}}_+$.
Similarly, there is a pure spinor field $\check\eta_{A'}\in\wt{\mc{S}}_+^*$ annihilating $H\subset T\wt{M}$, the horizontal distribution given by $D_A$, such that $\chi^{A'} \check{\eta}_{A'}\ne 0$.
To the spinors $\chi^{A'}$ and $\check\eta_{A'}$, we associate the sections 
\begin{align*}
\chi_a^A := \gamma\ind{_a^A_{B'}} \, \chi^{B'} \in \wt\ce_a\otimes\wt{\mc{S}}_- , \quad
\check{\eta}_{aA} := {\check{\eta}}_{B'} \, \gamma\ind{_a^{B'}_A} \in  \wt\ce_a\otimes\wt{\mc{S}}_-^* .
\end{align*}
Besides $V \cong \ker\chi_a^A$ and $H \cong \ker\check\eta_{aA}$, we can identify $V \cong \im\check\eta_{aA}$ and $H \cong \im\chi_a^A$.
The freedom in the choice of $\chi^{A'}$ and $\check\eta_{A'}$ can be fixed by 
\begin{align}\label{eq-for-Josef}
\chi^{aA} \wt{D}_a = \parderv{}{p_A} \, , \quad \check{\eta}^a_A \wt{D}_a = \parderv{}{x^A} + \Gamma \ind{_A^C_B} \, p_C \, \parderv{}{p_B} \, ,
\end{align}
where $\wt{D}_a$ is the Levi-Civita connection of $\wt{g}_{ab}$.

The previous setup allows the following useful interpretations:
the field $\chi^A_a$, respectively $\check\eta_A^a$, is seen as the pull-back of 1-forms, respectively the horizontal lift of vector fields, from $M$ to $\wt M$.
In particular, the PW metric can be written as
\begin{align}
\wt g_{ab} = 2\chi_{(a}^A \check{\eta}\ind{_{b)A}}
\label{g-chi-eta}
\end{align}
and its defining properties are reflected in 
\begin{align}\label{xinu}
\chi_a^A \chi^{aB} = 0 \, , \quad  \check{\eta}^a_A \check{\eta}_{aB} = 0 \, , \quad \chi_a^A \check{\eta}^a_B = \delta_A^B \, .
\end{align}
For later use, note also that the pull-back of the modification term $\Phi_{AB}\in\ce_{(AB)}(2)$ is 
\begin{align}
\Phi_{ab} = \Phi_{AB} \chi_a^A \chi_b^B 
\label{phi-chi-chi}
\end{align}
and the horizontal--vertical decomposition of a vector field $v^a\in\wt\ce^a$ looks like
\begin{align} \label{hor-vert}
v^a = \ups^A \check{\eta}^a_A + \be_A\chi^{aA} ,
\end{align}
where the coefficientts are interpreted as $\ups^A\in\ce^A$ and $\be_A\in\ce_A(2)$.
For vector fields \eqref{hor-vert} with coefficients $\ups^A$ and $\be_A$ depending only on $x^A$, the covariant derivative has the form
\begin{align}\label{Dv-no-p}
 \wt{D}_a {v}^b & = \left( D_A \ups^B \right) \chi_a^A \check{\eta}^b_B
 + \left( D_A \be_B  - \ups^C R \ind{_{CB}^D_A} p_D \right) \chi_a^A \chi^{bB} \, .
\end{align}

Concerning the distinguished vertical vector field from Remark \ref{rem-char}, its standard coordinate expression is $k = 2 p_A \parderv{}{p_A}$.
We will also need the following relations
\begin{align}
k^a = 2 p_A \chi^{aA} , \quad
p_A=\tfrac12k^r\check\eta_{rA} ,
\label{p-k-eta}
\end{align}
where, compared with \eqref{hor-vert}, $p_A$ is interpreted as a section of $\ce_A(2)$.
In fact, $k^a$ is a conformal Killing field of satisfying 
\begin{align}
\wt{D}_a k_b = \mu_{ab} +\wt{g}_{ab} .
\label{k-homothety}
\end{align}
where $\mu_{ab}=\wt{D}_{[a} k_{b]}$.
In particular, $k^a$ is a homothety of $\tilde{g}_{ab}$.
Finally, we note that the endomorphism $\mu^a{}_b$ acts as plus and minus the identity on the horizintal and the vertical distribution, respectively, i.e., 
$$
\mu^a{}_r \chi^r{}^A = -\chi^a{}^A, \quad 
\mu^a{}_r \check{\eta}^r{}^A = \check\eta^a{}^A.
$$

\section{Einstein metrics} \label{Einstein}

The aim of this section is to characterize Einstein representative metrics of modified conformal extensions in terms of underlying projective data. 
This section culminates in Theorem \ref{summaryaE}.
Contrary to the non-modified situation, Einstein metrics in the conformal class are not necessarily Ricci-flat.

An \emph{almost Einstein scale} of a conformal structure is a scale $\si \in \wt{\ce}[1]$ such that the corresponding metric in the conformal class is Einstein off the zero set of $\si$. 
For conformal class represented by a metric $\ol{g}_{ab}$, 
the scale $\si$ is almost Einstein if and only if the trace-free part of $(\ol{D}_{a} \ol{D}_{b} + \ol\Rho_{ab})\si$ with respect to $\ol{g}_{ab}$ vanishes, where $\ol{D}_a$ and $\ol{\Rho}_{ab}$ are the Levi-Civita connection and the Schouten tensor of $\ol{g}_{ab}$, respectively. 
If we consider a modified conformal extension represented by $\ol{g}_{ab}=\wt{g}_{ab}+\Phi_{ab}$, this condition can be also written in terms of the non-modified PW metric $\wt{g}_{ab}$ as 
\begin{equation} \label{aEscales}
\bigl( \wt{D}_{a} \wt{D}_{b} + \wt\Rho_{ab} \bigr) \si = \psi (\wt{g}_{ab} +\Phi_{ab}) ,
\end{equation}
for some $\psi \in \wt{\ce}[-1]$ that need not be specified for now.

In order to characterize solutions of \eqref{aEscales} in underlying projective terms, we employ the first projective BGG operators from \eqref{BGGce1} and \eqref{BGGce^A(-1)}.
Solutions of the former BGG equation are the scales $\ta\in\ce(1)$ that determine Ricci-flat affine connections from the projective class, solutions of the latter BGG equation are the weighted vector fields $\xi^A\in\ce^A(-1)$ satisfying
\begin{align}
D_A\xi^B -\tfrac1n\de_A{}^B D_R\xi^R =0 .
\label{xi-1}
\end{align}
As a consequent condition we get
\begin{align}
W_{AB}{}^C{}_R \xi^R =0 ,
\label{xi-weyl}
\end{align}
cf.\ \eqref{compBGGce^A(-1)}.
Note that the prolongation of \eqref{xi-1} gives
\begin{align}
D_{(A} D_{B)} \xi^R + \de^R_{(A}\Rho_{B)S} \xi^S =0 ,
\label{xi-prolong}
\end{align}
cf.\ \cite[eqn.~(54)]{hsstz-walker}.
Moreover, we shall need the bilinear differential operator $\mc{F}: \ce^A(-1) \times  \ce_{(AB)}(2) \to \ce_{(AB)}(1)$ given by 
\begin{equation}
\mc{F}(\xi,\Ph)_{AB} = \xi^R \bigl( D_{(A} \Ph_{B)R} - \tfrac12 D_R\Ph_{AB} \bigr) + \tfrac{1}{n} (D_R\xi^R) \Ph_{AB} ,
\label{Bop}
\end{equation}
which, indeed,  is projectively invariant too.

\begin{prop} \label{downaE}
A section $\si \in \wt{\ce}[1]$ is  an almost Einstein scale of the modified conformal extension represented by $\ol{g}_{ab}=\wt{g}_{ab}+\Phi_{ab}$ if and only if $\si$ is linear in $p_A$, 
\begin{equation} \label{polaE}
\si= \xi^Rp_R + \ta
\end{equation}
and the underlying sections $\ta \in \ce(1)$ and $\xi^A \in \ce^A(-1)$ satisfy the integrability  condition
\begin{align} 
\xi^R W_{RB}{}^C{}_D=0 \label{taxi0} 
\end{align}
and the differential conditions
\begin{align}
& \left(D_A \xi^B\right)_0 =0,  \label{taxi1} \\
& \left(D_{A} D_{B} + \Rho_{AB}\right) \ta = \mc{F}(\xi, \Ph)_{AB}. \label{taxi2}
\end{align}
\end{prop}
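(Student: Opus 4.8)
The plan is to translate the conformal almost-Einstein equation \eqref{aEscales} into projective data on $M$ by evaluating it in the adapted frame $\{\check\eta^a_A,\chi^{aA}\}$ of horizontal lifts and vertical vectors, with dual coframe $\{\chi_a^A,\check\eta_{aA}\}$ pinned down by \eqref{eq-for-Josef} and the pairings \eqref{xinu}. Every symmetric $2$-tensor on $\wt M$ is recovered from its three contractions with $\chi^{aA}\chi^{bB}$ (vertical), $\chi^{aA}\check\eta^b_B$ (mixed) and $\check\eta^a_A\check\eta^b_B$ (horizontal). From \eqref{g-chi-eta} and \eqref{phi-chi-chi}, together with the fact that $\wt\Rho_{ab}$ is the pull-back of $\Rho_{AB}$ and hence of the same shape as $\Phi_{ab}$, the frame content of the two sides is rigid: $\wt g_{ab}$ contributes only in the mixed block, with $\chi^{aA}\check\eta^b_B\wt g_{ab}=\de^A_B$, whereas $\Phi_{ab}$ and $\wt\Rho_{ab}$ contribute only in the horizontal block, with $\check\eta^a_A\check\eta^b_B\Phi_{ab}=\Phi_{AB}$ and $\check\eta^a_A\check\eta^b_B\wt\Rho_{ab}=\Rho_{AB}$. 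I would therefore read off \eqref{aEscales} block by block.

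First I would exploit the vertical block $\chi^{aA}\chi^{bB}$. Since $\chi^{aA}\wt D_a=\partial/\partial p_A$ and $\wt g_{ab}$, $\Phi_{ab}$, $\wt\Rho_{ab}$ all vanish here, the equation collapses to $\partial^2\si/\partial p_A\partial p_B=0$, forcing $\si$ to be affine in the fibre coordinates, i.e.\ $\si=\xi^Rp_R+\ta$ with $\xi^A\in\ce^A(-1)$ and $\ta\in\ce(1)$; this is \eqref{polaE}. Next I would use the mixed block $\chi^{aA}\check\eta^b_B$, in which $\wt\Rho$ and $\Phi$ drop out. Computing the mixed component of the Hessian by means of \eqref{Dv-no-p} — the frame term $\chi^{aA}(\wt D_a\check\eta^b_B)\wt D_b\si$ vanishing because $\wt D_a\check\eta^b_B$ is of pure $\chi_a\chi^b$ type — one finds $\chi^{aA}\check\eta^b_B\wt D_a\wt D_b\si=D_B\xi^A$. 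Hence $D_B\xi^A=\psi\,\de^A_B$, whose trace-free part is exactly the first BGG equation \eqref{taxi1} and whose trace fixes $\psi=\tfrac1n D_R\xi^R$.

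The substantive step is the horizontal block $\check\eta^a_A\check\eta^b_B$. Evaluating $\check\eta^a_A\check\eta^b_B\wt D_a\wt D_b\si$ for $\si=\xi^Rp_R+\ta$ via \eqref{Dv-no-p} produces a part linear in $p_R$ and a part independent of $p_R$, the former arising from the curvature $R\,p$ terms generated when the non-parallel frame is differentiated. As the right-hand side $\psi(\wt g_{ab}+\Phi_{ab})$ is $p$-independent, the two parts must match separately. The $p$-linear part must vanish by itself; after substituting the prolongation \eqref{xi-prolong} (legitimate once \eqref{taxi1} holds) to eliminate the second derivatives of $\xi$ and decomposing the remaining Riemann terms into projective Weyl and Schouten pieces, the Schouten contributions cancel and one is left precisely with $\xi^RW_{RB}{}^C{}_D=0$, i.e.\ \eqref{taxi0}; this is the contraction on the \emph{first} index of $W$, genuinely distinct from \eqref{xi-weyl}, which already follows from \eqref{taxi1}. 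The $p$-independent part gives $(D_AD_B+\Rho_{AB})\ta$ on the left. Since the left side of \eqref{aEscales} is written with $\wt D$ while the Einstein condition lives at $\ol D$, the remaining difference tensor $F$ of Lemma \ref{relateD} enters the horizontal block and, on a $p$-linear $\si$, feeds back exactly $\xi^R\bigl(D_{(A}\Phi_{B)R}-\tfrac12 D_R\Phi_{AB}\bigr)$; combining this with $\psi\,\Phi_{AB}=\tfrac1n(D_R\xi^R)\Phi_{AB}$ reassembles the operator $\mc F$ of \eqref{Bop} and yields \eqref{taxi2}. The converse is obtained by reading the same three identities backwards: for $\si=\xi^Rp_R+\ta$ satisfying \eqref{taxi0}, \eqref{taxi1} and \eqref{taxi2}, all three blocks of \eqref{aEscales} hold with $\psi=\tfrac1n D_R\xi^R$.

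I expect the horizontal block to be the main obstacle. The difficulty is bookkeeping: one must collect every curvature term produced by \eqref{Dv-no-p} and by the covariant derivatives of $\chi_a^A$ and $\check\eta_{aA}$, split them cleanly into $p$-linear and $p$-independent parts, recognise the $p$-linear remainder as $\xi^RW_{RB}{}^C{}_D$ (and not the automatically vanishing \eqref{xi-weyl}), and identify the $p$-independent remainder as $\mc F(\xi,\Phi)_{AB}$. The last identification is the most delicate, since the derivatives of $\Phi$ occurring in $\mc F$ are not visible in the standard PW Hessian and only surface through the difference tensor $F$, i.e.\ from the passage between the modified and the standard Levi-Civita connection.
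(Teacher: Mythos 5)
Your proposal follows the paper's proof essentially step for step: the same decomposition of the Hessian into vertical, mixed and horizontal blocks via $\chi^{aA}$ and $\check\eta^a_A$, the same use of \eqref{Dv-no-p} and of the prolongation \eqref{xi-prolong} to process the $p$-linear part, and the same (in fact more explicitly justified) accounting of the $\Phi$-derivative terms through the difference tensor of Lemma \ref{relateD}. The only imprecision is the claim that the $p$-linear part of the horizontal block yields \eqref{taxi0} ``precisely'': since that block is symmetric in $A,B$ it directly gives only $\xi^S W_{S(A}{}^R{}_{B)}=0$, and one must combine this with the skew part $\xi^S W_{S[A}{}^R{}_{B]}=-\tfrac12 W_{AB}{}^R{}_S\xi^S=0$ — obtained from \eqref{xi-weyl} together with the algebraic symmetries of the projective Weyl tensor — to reach the full unsymmetrized condition, exactly as the paper does.
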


Note that, for $\Phi_{AB}=0$, the right-hand side of \eqref{taxi2} vanishes, i.e., both $\ta$ and $\xi^A$ are  solutions of the corresponding BGG equations.

\begin{proof}
Let $\si \in \wt{\ce}[1]$ be an almost Einstein scale, i.e., the equation \eqref{aEscales} be satisfied.
Both $\ol{g}_{ab}=\wt{g}_{ab}+\Phi_{ab}$ and $\wt\Rho_{ab}$ vanish when contracting with two vertical vectors, hence the previous assumption implies $\chi^{aA} \chi^{bB}  \wt{D}_{a} \wt{D}_{b} \si = \frac{\del^2}{\del p_A\del p_B} \si =0$.
Thus, $\si$ is a linear polynomial in $p_A$, for which we fix the notation as in \eqref{polaE}.

To express the Einstein scale equation in terms of $\ta \in \ce(1)$ and $\xi^A \in \ce^A(-1)$,
one has to substitute  \eqref{polaE} into \eqref{aEscales} and expand according to \eqref{Dv-no-p}, taking into account \eqref{p-k-eta}.
Considering the decomposition of the left-hand side as
\begin{equation*} \label{decmaE}
\bigl( \wt{D}_{a} \wt{D}_{b} + \wt\Rho_{ab} \bigr) \si 
= \Th'_A{}^B \chi_{(a}{}^A\check{\eta}_{b)B} + \Th''_{AB} \chi_{(a}{}^A \chi_{b)}{}^B ,
\end{equation*}
direct computation reveals that
\begin{align*}
& \Th'_A{}^B = 2 D_A \xi^B, \\
& \Th''_{AB} = \big( D_{(A} D_{B)} \xi^R + \de^R_{(A}\Rho_{B)S} \xi^S - \xi^S W_{S(A}{}^R{}_{B)} \big) \, p_R \, + \\
&  \hskip5em +  (D_{A}D_{B} + \Rho_{AB})\ta - \xi^R ( D_{(A}\Ph_{B)R} - \tfrac12 D_R\Ph_{AB} ) .
\end{align*}
Recasting the right-hand side of \eqref{aEscales} according to \eqref{g-chi-eta} and \eqref{phi-chi-chi}, the Einstein scale equation is written as
\begin{align}
\big( \Th'_A{}^B -2\psi\de_A{}^B \big) \chi_{(a}{}^A\check{\eta}_{b)B} + \big( \Th''_{AB} -\psi\Phi_{AB} \big) \chi_{(a}{}^A \chi_{b)}{}^B = 0 ,
\label{aE-chi-eta}
\end{align}
where 
$\psi = \tfrac1{n} D_R\xi^R$.
The two summands in \eqref{aE-chi-eta} must vanish separately.
Taking into account the fact that all expressions are polynomial in $p_A$, 
the first condition $\Th'_A{}^B -2\psi\de_A{}^B =0$ is equivalent to 
\eqref{taxi1}
and the second condition $\Th''_{AB} -\psi\Phi_{AB} =0$ is equivalent to the pair
\begin{align}
& D_{(A} D_{B)} \xi^R + \de^R_{(A}\Rho_{B)S} \xi^S - \xi^S W_{S(A}{}^R{}_{B)} = 0, \label{condaE2} \\
& (D_{A}D_{B} + \Rho_{AB})\ta - \xi^R ( D_{(A}\Ph_{B)R} - \tfrac12 D_R\Ph_{AB} ) - \tfrac1n (D_R\xi^R) \Phi_{AB} =0. \label{condaE3}
\end{align}
From \eqref{condaE2} and the condition \eqref{xi-prolong}, which is a consequence of \eqref{taxi1}, we conclude that $\xi^S W_{S(A}{}^R{}_{B)} = 0$.
From the symmetries of Weyl tensor and the condition \eqref{xi-weyl}, which is another consequence of \eqref{taxi1}, we conclude that $\xi^S W_{S[A}{}^R{}_{B]} =-\frac12 W_{AB}{}^R{}_S \xi^S =0$.
Thus, the integrability condition \eqref{taxi0} holds.
Conversely, \eqref{taxi0} and \eqref{taxi1} clearly imply \eqref{condaE2}.
With the notation from \eqref{Bop}, the equation \eqref{condaE3} is just \eqref{taxi2}.
\end{proof}

Proposition \ref{downaE} establishes a bijective correspondence between almost Einstein scales and pairs $\ta \in \ce(1)$ and $\xi^A \in \ce^A(-1)$ satisfying certain projectively invariant conditions. 
The following theorem shows that the components can also be identified in conformal terms, and we have full control over the Ricci-flatness of rescaled metrics.

\begin{thm} \label{summaryaE}
There is a bijective correspondence between almost Einstein scales of the modified conformal extension and pairs $\ta \in \ce(1)$ and $\xi^A \in \ce^A(-1)$ satisfying conditions \eqref{taxi0}--\eqref{taxi2}.
More precisely, an almost Einstein scale $\si \in \wt{\ce}[1]$ can be uniquely decomposed as
\begin{align*}
\si = \si_+ + \si_- ,
\end{align*}
where $\mc{L}_k\si_+ = \si_+$ and $\mc{L}_k\si_- = -\si_-$.
These components correspond to \eqref{polaE} as 
\begin{align}
\xi^A = \chi^{aA}\wt{D}_a\si_+ , \quad
\ta = \si_- .
\label{tauxi}
\end{align}

Moreover, the scalar curvature of the rescaled metric corresponding to $\si$ is 
\begin{align*}
2n(2n-1)\Ph_{RS}\, \xi^R\xi^S , 
\end{align*}
off its zero set, where $n$ is the dimension of the underlying projective manifold and $\Phi_{AB} \in \ce_{AB}(2)$ is the modification tensor.
\end{thm}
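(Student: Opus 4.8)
The plan is to read the bijection off Proposition~\ref{downaE} and then to establish the two genuinely new assertions: the conformal description \eqref{tauxi} of the components, and the scalar curvature formula.

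By Proposition~\ref{downaE} an almost Einstein scale is precisely a section of the form \eqref{polaE}, $\si=\xi^Rp_R+\ta$, whose underlying data satisfy \eqref{taxi0}--\eqref{taxi2}, so $\si\mapsto(\ta,\xi)$ is the asserted bijection and it only remains to characterise the two summands conformally. I would do this with the Lie derivative along $k$. Since \eqref{k-homothety} gives $\wt D_{(a}k_{b)}=\wt g_{ab}$, the ($\wt D$-parallel) scale of $\wt g$ is an eigensection of $\mc L_k$ with eigenvalue $-1$, whence $\mc L_k\si=k^a\wt D_a\si-\si$ for every $\si\in\wt\ce[1]$. By \eqref{p-k-eta} and \eqref{eq-for-Josef} the operator $k^a\wt D_a=2p_A\,\partial/\partial p_A$ acts on a term of polynomial degree $d$ in $p$ by multiplication with $2d$; hence the degree-one part $\xi^Rp_R$ sits in the $(+1)$-eigenspace and the degree-zero part $\ta$ in the $(-1)$-eigenspace. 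This identifies $\si_+=\xi^Rp_R$ and $\si_-=\ta$, and applying $\chi^{aA}\wt D_a=\partial/\partial p_A$ to $\si_+$ returns $\xi^A$, which is \eqref{tauxi}.

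For the scalar curvature I would substitute $\si=\xi^Rp_R+\ta$ into the conformal transformation law for $\Sc(g^\si)$, where $g^\si=\si^{-2}\ol g$. Two facts make this tractable. First, $\ol\Rho_{ab}=\wt\Rho_{ab}$ (Proposition~\ref{relateR}) is strictly horizontal, so $\wt g^{ab}\wt\Rho_{ab}=0$ and $\Ph^{ab}\wt\Rho_{ab}=0$; thus $\ol J=\ol g^{ab}\ol\Rho_{ab}=0$ and $\Sc(\ol g)=0$. Second, using $(\ol g^{-1})^{ab}=\wt g^{ab}-\Ph^{ab}$, the trace-freeness and strict horizontality of the difference tensor of Lemma~\ref{relateD}, and the $\ol g$-trace of \eqref{aEscales}, one gets $\ol\Delta\si=2n\psi$ with $\psi=\tfrac1n D_R\xi^R$. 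For the gradient term I would use the slot contractions $\chi^{aA}\wt D_a\si=\xi^A$ and $\check\eta^a_A\wt D_a\si=\psi p_A+D_A\ta$ (from \eqref{eq-for-Josef} and \eqref{taxi1}), together with $\Ph^{ab}(\wt D_a\si)(\wt D_b\si)=\Ph_{RS}\xi^R\xi^S$. Substituting and cancelling the $p$-dependent terms leaves
\[
\Sc(g^\si)=2n(2n-1)\bigl(\Ph_{RS}\xi^R\xi^S-2(\xi^AD_A\ta-\psi\ta)\bigr).
\]

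The decisive step is then the first-order identity $\xi^AD_A\ta=\psi\ta$, which collapses the bracket to $\Ph_{RS}\xi^R\xi^S$. I would attack it by differentiation: contracting \eqref{taxi2} with $\xi^A$ and unwinding the operator \eqref{Bop} gives $\xi^AD_AD_B\ta=\tfrac12 D_B(\Ph_{RS}\xi^R\xi^S)+(D_B\psi)\ta$, while the prolongation \eqref{xi-prolong} of \eqref{taxi1} yields $D_B\psi=-\Rho_{BS}\xi^S$; together with $D_A\xi^B=\psi\delta_A^B$ these combine to show that $\xi^AD_A\ta-\psi\ta-\tfrac12\Ph_{RS}\xi^R\xi^S$ is constant. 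The hard part—fixing this constant, i.e.\ simultaneously proving that $\Ph_{RS}\xi^R\xi^S$ is constant (as Schur's lemma requires, $g^\si$ being genuinely Einstein off the zero set) and that the remaining integration constant vanishes—is where the real work lies; I expect to close it through the prolongation (tractor) descriptions of the BGG solutions $\xi$ and $\ta$, which govern precisely such projectively invariant pairings.
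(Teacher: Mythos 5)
Your first half is correct and coincides with the paper's own argument: the bijection is read off Proposition \ref{downaE}, and the identification of $\si_\pm$ via the eigenvalues of $\mc L_k$, i.e.\ via the polynomial degree in $p_A$, is exactly how the paper proceeds. Your scalar curvature computation is in fact \emph{more} careful than the paper's: the paper passes from $\si$ to $\si_+$ by asserting that ``the component $\si_-$ does not contribute to the trace'', whereas you retain the cross term between $\wt D\si_+$ and $\wt D\si_-$ and arrive at $\Sc(g^\si)=2n(2n-1)\bigl(\Ph_{RS}\xi^R\xi^S-2(\xi^AD_A\ta-\psi\ta)\bigr)$ with $\psi=\tfrac1nD_R\xi^R$. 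I have checked this expression and it is right, as is your observation that $c:=\xi^AD_A\ta-\psi\ta-\tfrac12\Ph_{RS}\xi^R\xi^S$ is constant (using $D_A\xi^B=\psi\de_A^B$, $D_A\psi=-\Rho_{AR}\xi^R$ and \eqref{taxi2}).

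The gap is the deferred identity $\xi^AD_A\ta=\psi\ta$: it is false, and no prolongation or tractor argument will deliver it. Take the flat connection on $\rr^2$, $\Ph_{AB}=0$, $\xi^A=\de^A_1$, $\ta=x^1$; all of \eqref{taxi0}--\eqref{taxi2} hold with $\psi=0$, yet $\xi^AD_A\ta=1$. For this pair $\si=x^1+p_1$, and after the substitution $u=x^1+p_1$, $v=x^1-p_1$ the rescaled metric becomes $u^{-2}\bigl(\tfrac12\d u^2-\tfrac12\d v^2+2\,\d x^2\odot\d p_2\bigr)$, a constant-curvature metric with scalar curvature $-24$, whereas $2n(2n-1)\Ph_{RS}\xi^R\xi^S=0$. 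So the constant $c$ need not vanish, and what your computation actually establishes is $\Sc(g^\si)=-4n(2n-1)\,c$; this agrees with the displayed formula of the theorem only when $c=-\tfrac12\Ph_{RS}\xi^R\xi^S$, e.g.\ in the pure case $\ta=0$, which is the only case the paper's proof really treats. In other words, your honest bookkeeping has exposed that the discarded cross term is genuinely present: you should not keep trying to prove the missing identity, but rather record that the scalar-curvature formula requires the correction term $-2(\xi^AD_A\ta-\psi\ta)+\Ph_{RS}\xi^R\xi^S$ added inside the bracket (equivalently, replace the stated value by $-4n(2n-1)c$). The bijective correspondence and the decomposition \eqref{tauxi}, which are the parts needed elsewhere in the paper, are unaffected.
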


\def\mgi{(\ol{g}^{-1})}

\begin{proof}
For the decomposition of an almost Einstein scale as in \eqref{polaE}, let $\si_+ := \xi^R p_R$ and $\si_- := \ta$.
The properties $\mc{L}_k\si_+ = \si_+$ and $\mc{L}_k\si_- = -\si_-$ follow by the same argument as in \cite[Lemma~5.2]{hsstz-walker}
(the degree of homogeneity with respect to $p_A$ plays a key role there).
The expressions in \eqref{tauxi} are clear, recalling that $\chi^{aA}\wt{D}_a = \frac{\del}{\del p_A}$.

The remaining part is based on a similar reasoning as in \cite[Proposition 5.3]{hsstz-walker} with regard to our current setting as in the proof of Proposition \ref{downaE}.
The scalar curvature of a metric is proportional to the trace of its Schouten tensor with the constant factor $2(2n-1)$.
This follows from definitions which, together with transformation formulas reflecting a change of scale, can be found e.g. in \cite{Bailey1994}.
Let $\ol{g}_{ab}=\wt{g}_{ab}+\Ph_{ab}$ be the modified PW metric and $\ol{\Rho}:=\mgi^{rs}\ol\Rho_{rs}$ be the trace of the corresponding Schouten tensor.
Analogous quantities corresponding to an Einstein scale $\si$ are denoted as $\wh{g}_{ab}$ and $\wh\Rho$, respectively.
The key relation is
\begin{align}
\wh\Rho = \ol\Rho -\mgi^{rs} (\ol{D}_r\Ups_s +(n-1) \Ups_r\Ups_s) ,
\label{eq-sc}
\end{align}
where $\Upsilon_a = -\si^{-1} \ol{D}_a \si$.
Since $\ol\Rho_{ab}$ is strictly horizontal, $\ol\Rho$ vanishes.
Similarly, for the decomposition $\si = \si_+ + \si_-$ as above, the component $\si_-$ does not contribute to the trace.
Hence we may consider just $\si =\si_+ =\xi^R p_R$.
To compute the differential, we use \eqref{p-k-eta}, expand according to \eqref{Dv-no-p} and substitute \eqref{xi-1}, which yields
\begin{align*}
\ol{D}_a \si_+ = \tfrac{1}{2n} (D_R \xi^R) k_a + \xi^R \check{\eta}_{aR} .
\end{align*}
The trace of the derivative of $\Ups_a$  simplifies as 
\begin{align*}
\mgi^{rs} \ol{D}_r \Upsilon_s = \mgi^{rs} \Upsilon_r \Upsilon_s - 2\si^{-1}_+ D_R \xi^R .
\end{align*}
Putting things together, one verifies that the divergence terms vanish and \eqref{eq-sc} reduces to 
$\wh\Rho = n\, \Ph_{RS}\,\xi^R\xi^S$.
Hence the claim follows.
\end{proof}

We conclude this section with an additional relation that will be helpful in the next section where we discuss some special cases.
It is a consequence of the conditions from Proposition \ref{downaE}:

\begin{cor} \label{E2BGG}
If $\ta\in\ce(1)$, $\xi^A\in\ce^A(-1)$ and $\Ph_{AB}\in\ce_{(AB)}(2)$ satisfy \eqref{taxi0}--\eqref{taxi2} then
\begin{align}
\left( W_{AB}{}^R{}_C D_R - Y_{CAB} \right) \ta = 
\xi^R \left( \tfrac34 W_{AB}{}^S{}_C \Ph_{RS} -2 \B_2(\Ph)_{ABCR} \right) , 
\label{taxi3}
\end{align}
where $\B_2: \ce_{(AB)}(2) \to  \ce_{[AB][CD]}(2)$ is the second BGG operator \eqref{BGGce_A2}.
\end{cor}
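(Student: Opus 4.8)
The plan is to derive \eqref{taxi3} as a differential consequence of the conditions \eqref{taxi0}--\eqref{taxi2}, by applying a suitable first-order projective operator to the equation \eqref{taxi2} and then eliminating the unwanted terms using the remaining conditions together with standard projective Bianchi-type identities. Concretely, the left-hand side of \eqref{taxi3} involves the combination $W_{AB}{}^R{}_C D_R - Y_{CAB}$, where $Y_{CAB}$ is the projective Cotton tensor; this is exactly the operator one obtains by skew-symmetrizing a covariant derivative of $(D_A D_B + \Rho_{AB})\ta$ and invoking the Ricci identity plus the definition of the Cotton tensor as $Y_{CAB} = 2D_{[A}\Rho_{B]C}$ (up to sign/normalization conventions fixed in the appendix). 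So first I would compute $D_{[C}(D_{A]}D_B + \Rho_{A]B})\ta$, reorganize the second derivatives of $\ta$ using the commutator of covariant derivatives acting on a weighted density, and recognize the Weyl and Cotton contributions on the left.

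Having produced the left-hand side as the image of \eqref{taxi2} under this skew-derivative operator, the next step is to substitute \eqref{taxi2} itself, i.e. replace $(D_A D_B + \Rho_{AB})\ta$ by the right-hand side $\mc{F}(\xi,\Ph)_{AB}$ from \eqref{Bop}, and differentiate that expression. This is the calculational heart of the argument: one has to expand $D_{[C}\mc{F}(\xi,\Ph)_{A]B}$ using the Leibniz rule, producing terms with one derivative hitting $\xi^R$ and terms with a derivative hitting $\Ph$. The $\xi$-derivative terms are controlled by \eqref{taxi1}, which says $(D_A\xi^B)_0 = 0$, so every occurrence of $D_A\xi^B$ collapses to its trace part $\tfrac1n \de_A{}^B D_R\xi^R$; the remaining second-derivative-of-$\Ph$ terms, together with the appropriate curvature corrections, are then repackaged into the projective second BGG operator $\B_2(\Ph)_{ABCR}$ via its defining formula (the projective analogue of \eqref{wtB}). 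The integrability condition \eqref{taxi0}, $\xi^R W_{RB}{}^C{}_D = 0$, is what allows the stray Weyl--times--$\xi$ terms to be simplified down to the single surviving $\tfrac34 W_{AB}{}^S{}_C\Ph_{RS}$ piece on the right.

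The step I expect to be the main obstacle is the bookkeeping in matching the curvature-correction terms to the precise normalization of $\B_2(\Ph)$. The projective second BGG operator carries specific coefficients in front of its Weyl-curvature correction terms (cf. the conformal version \eqref{wtB}, which has the $\tfrac14 \wt{W}$ corrections), and reproducing the stated coefficients $\tfrac34$ and $-2$ in \eqref{taxi3} requires carefully tracking how the Weyl contributions from the Ricci identity combine with those hidden inside $\mc{F}(\xi,\Ph)$ and inside the definition of $\B_2(\Ph)$. I would organize this by first writing out $\B_2(\Ph)_{ABCR}$ explicitly in terms of $D_A D_C \Ph_{BR}$, $\Rho_{AC}\Ph_{BR}$ and the Weyl corrections, and then arranging the computation so that the combination $D_{[C}\mc{F}(\xi,\Ph)_{A]B}$ is expressed as a multiple of $\xi^R\B_2(\Ph)_{ABCR}$ plus an explicit Weyl remainder, with all uses of \eqref{taxi0} and \eqref{taxi1} applied at the end. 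Since \eqref{taxi3} is stated as a corollary and the right-hand side is quoted exactly, the verification reduces to checking that no further terms survive, which is a matter of using the Weyl symmetries and the first Bianchi identity consistently throughout.
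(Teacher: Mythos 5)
Your proposal follows exactly the paper's route: the paper also applies the second BGG operator $T_{AB}\mapsto D_{[C}T_{A]B}$ from the sequence \eqref{BGGce1} to both sides of \eqref{taxi2}, obtaining $-\tfrac12(W_{AB}{}^R{}_C D_R - Y_{CAB})\ta$ on the left via the composition identity \eqref{compBGGce1}, and on the right carrying out the Leibniz expansion of the derivative of $\mc{F}(\xi,\Ph)$, using \eqref{taxi1} (and its consequence \eqref{xi-weyl}) together with \eqref{taxi0} to reduce the Weyl remainders and repackage the $\Ph$-terms into $\B_2(\Ph)$. The only part you leave implicit is the same ``tedious computation'' the paper itself does not display, so the proposal is correct and essentially identical in approach.
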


\begin{proof}
Applying the second BGG operator \eqref{BGGce1} to the left-hand side of \eqref{taxi2} yields
$$
- \tfrac12 \bigl( W_{AB}{}^R{}_C D_R - Y_{CAB} \bigr) \ta ,
$$
where $Y_{CAB} = 2 D_{[A} \Rho_{B]C}$ is the Cotton tensor, cf.\ \eqref{compBGGce1}.
Applying the same operator to the right-hand side of \eqref{taxi2}, a tedious computation leads to
$$
\xi^R \bigl( \B_2(\Ph)_{ABCR} - \tfrac38 W_{AB}{}^S{}_C \Ph_{RS} 
+ \tfrac14 W_{CR}{}^S{}_{[A} \Ph_{B]S} \bigr) ,
$$
where $\B_2$ is the BGG operator \eqref{BGGce_A2}, more explicitly described in \eqref{B2expl}.
Here one has to take into account that $\xi^A$ satisfies \eqref{taxi1} and, consequently,  \eqref{xi-weyl}.
The previous two displays together with the integrability condition \eqref{taxi0} give the stated result.
\end{proof}

\section{Einstein metrics: special cases} \label{aEspecial}

The conditions from Proposition \ref{downaE} significantly simplify in certain special cases.
We discuss the case of modified conformal extensions of projectively flat structures, 
the case when the modification term $\Phi_{AB}$ is in the image of the first BGG operator and the lowest dimensional case.
In all these cases, we are able to untangle the interrelations between the source sections $\ta$ and $\xi^A$, relative to the modification term $\Phi_{AB}$, and specify the dimension of the space of almost Einstein scales.

\subsection{Projectively flat case} \label{Einstein-flat}
In this case, many curvature related objects disappear which  leads to significant simplifications.
Primarily, both the Weyl and the Cotton tensor vanish.
In particular, the condition \eqref{taxi0} is satisfied trivially and there is only one term in \eqref{taxi3} that survives.
Also, in the flat case any BGG sequence is a complex and, working locally, it is actually exact.
These facts lead to the following splitting of the characterizing conditions:

\begin{thm} \label{aEprojflat}
Consider a modified PW metric associated to a projectively flat affine connection $D_A$ and a modification tensor $\Ph_{AB}\in\ce_{(AB)}(2)$. 
Locally, there is a bijective correspondence between almost Einstein scales of the conformal class and pairs $\ta \in \ce(1)$ and $\xi^A \in \ce^A(-1)$ satisfying 
\begin{align}
& (D_{A} D_{B} + \Rho_{AB}) \ta = 0 , \label{flat-ta} \\
& (D_A \xi^B)_0=0 , \quad \xi^R\, \B_2(\Ph)_{ABCR}=0 , \label{flat-xi}
\end{align}
where $\B_2: \ce_{(AB)}(2) \to  \ce_{[AB][CD]}(2)$ is the second BGG operator \eqref{BGGce_A2}.
The dimension of the space of almost Einstein scales equals to $d+n+1$, where $d$ is the dimension of the space of solutions to \eqref{flat-xi} and $n$ is the dimension of the underlying projective manifold.
\end{thm}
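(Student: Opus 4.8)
The plan is to specialize the general characterization in Proposition \ref{downaE} to the projectively flat setting, where the vanishing of the Weyl and Cotton tensors collapses the intertwined conditions \eqref{taxi0}--\eqref{taxi2} into the decoupled system \eqref{flat-ta}--\eqref{flat-xi}. First I would observe that projective flatness means $W_{AB}{}^C{}_D=0$ and $Y_{CAB}=0$, so the integrability condition \eqref{taxi0} is vacuous and can be dropped. The differential condition \eqref{taxi1} is unchanged, giving the first part of \eqref{flat-xi}. The remaining work is to show that the coupled pair $(\ta,\xi^A)$ satisfying \eqref{taxi2} can be replaced by two independent conditions: that $\ta$ solves the homogeneous BGG equation \eqref{flat-ta}, and that $\xi^A$ additionally satisfies the algebraic constraint $\xi^R\B_2(\Ph)_{ABCR}=0$.

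The key step is to exploit Corollary \ref{E2BGG}. In the projectively flat case, the left-hand side of \eqref{taxi3} vanishes identically (since $W=0$ and $Y=0$), and the term $\tfrac34 W_{AB}{}^S{}_C\Ph_{RS}$ on the right-hand side also vanishes, so \eqref{taxi3} reduces precisely to $\xi^R\B_2(\Ph)_{ABCR}=0$. This is the crucial consequence that produces the algebraic constraint in \eqref{flat-xi}. Conversely, I would argue that once $\xi^A$ satisfies \eqref{taxi1} together with this constraint, the right-hand side $\mc{F}(\xi,\Ph)_{AB}$ of \eqref{taxi2} lies in the image of the first BGG operator \eqref{BGGce1}: by the exactness of the (locally exact, since flat) BGG complex, the vanishing of the second BGG operator applied to $\mc{F}(\xi,\Ph)$ means it is $\B_1$ of some density, and this is exactly the solvability condition guaranteeing existence of a $\ta$ solving \eqref{taxi2}. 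Thus solutions $\ta$ of the inhomogeneous equation \eqref{taxi2} exist precisely when the constraint holds, and they differ by solutions of the homogeneous equation \eqref{flat-ta}.

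For the dimension count, I would argue as follows. Fix a $\xi^A$ solving the constrained system \eqref{flat-xi}; the space of such $\xi^A$ has dimension $d$ by definition. For each such $\xi^A$ the inhomogeneous equation \eqref{taxi2} is solvable (by the argument above), and its solution $\ta$ is determined up to addition of a solution of \eqref{flat-ta}. The homogeneous equation \eqref{flat-ta} is the first BGG equation for almost Ricci-flat scales, whose local solution space in the projectively flat case has dimension $n+1$. Hence the total space of pairs $(\ta,\xi^A)$, and therefore of almost Einstein scales by the bijection of Proposition \ref{downaE}, fibers over the $d$-dimensional space of admissible $\xi^A$ with $(n+1)$-dimensional affine fibers, giving dimension $d+n+1$.

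The main obstacle I anticipate is establishing the existence half rigorously: showing that the algebraic constraint $\xi^R\B_2(\Ph)_{ABCR}=0$ together with \eqref{taxi1} is not merely necessary (which Corollary \ref{E2BGG} delivers) but also sufficient for the solvability of \eqref{taxi2} in $\ta$. This requires invoking local exactness of the projective BGG complex in the flat case and identifying $\B_2$ as precisely the next operator whose kernel governs the image of $\B_1$, so that $\B_2(\mc{F}(\xi,\Ph))=0$ guarantees $\mc{F}(\xi,\Ph)\in\im\B_1$. Care is needed to check that $\mc{F}(\xi,\Ph)$ has the correct weight and tensor type $\ce_{(AB)}(1)$ to sit in the relevant BGG sequence, and that applying $\B_2$ to it yields exactly the constraint extracted in Corollary \ref{E2BGG} rather than an a priori different condition.
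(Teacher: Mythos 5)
Your proposal is correct and follows essentially the same route as the paper's proof: drop the vacuous condition \eqref{taxi0}, extract the algebraic constraint $\xi^R\B_2(\Ph)_{ABCR}=0$ from Corollary \ref{E2BGG}, use local exactness of the flat BGG complex for $\mc{T}^*$ to solve the inhomogeneous equation \eqref{taxi2} with freedom parametrized by solutions of \eqref{flat-ta}, and count dimensions as $d+(n+1)$. The "obstacle" you flag at the end is precisely the step the paper settles by the same appeal to local exactness, so nothing is missing.
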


\begin{proof}
Only the conditions \eqref{taxi1} and \eqref{taxi2} from Proposition \ref{downaE} are relevant in our case.
Applying the second BGG operator to \eqref{taxi2} yields $0=\xi^R\, \B_2(\Ph)_{ABCR}$, cf.\ Corollary \ref{E2BGG}.

Locally, by the exactness of the BGG sequence, the right-hand side of \eqref{taxi2} is in the image of the first BGG operator on a section of $\ce(1)$. 
This guarantees the existence of $\ta \in \ce(1)$ satisfying \eqref{taxi2} and all such sections are parametrized by solutions to the equation \eqref{flat-ta}.
However, in the flat case, solutions to the later equation allow a coordinate expression $\ta=c_A x^A + c_0$, where $c_0, c_1, \dots, c_n$ are arbitrary constants, cf. section \ref{A1}.
\end{proof}

If the tensor field $\B_2(\Ph)$ is generic, the dimension of the space of almost Einstein scales equals to $n+1$.
Here the genericity means that the field, interpreted as a bundle map $\ce^A \to \ce_{[AB]C}$, is injective.

\subsection{Special modification} \label{Einstein-spec}
Here we assume that $\Ph_{AB} = D_{(A}\ph_{B)}$, for some $\ph_A \in \ce_A(2)$, i.e.\ $\Ph_{AB}$ is in the image of the first BGG operator \eqref{BGGce_A2}. 
With regard to Remark \ref{rem-char}, the characterization of almost Einstein scales has to correspond to the one for standard conformal extensions as in \cite[Theorem 2]{hsstz-walker}.
To keep the presentation self-contained, we derive the characterization directly from Proposition \ref{downaE}.
The effect of $\Ph_{AB}$ is so that the corresponding system of equations, which is homogeneous in the standard case, becomes non-homogeneous and one seeks for a particular solution.
In particular, the space of almost Einstein scales of a modified conformal extension of the current type is an affine space over the vector space of almost Einstein scales of its non-modified companion.

\begin{thm} \label{aEimage}
Let the standard PW metric be modified by the term of the form  $\Ph_{AB} = D_{(A} \ph_{B)}$, for some $\ph_A \in \ce_A(2)$.
There is a bijective correspondence between almost Einstein scales of the conformal class and pairs $\ta \in \ce(1)$ and $\xi^A \in \ce^A(-1)$ satisfying 
\begin{align}
& (D_{A} D_{B} + \Rho_{AB}) \ta = 0 , \label{spec-ta} \\
& \xi^R W_{RA}{}^C{}_B=0 , \quad (D_A \xi^B)_0=0 . \label{spec-xi}
\end{align}
The dimension of the space of almost Einstein scales equals to $d_1+d_2$, where $d_1$ and $d_2$ is the dimension of the space of solutions to \eqref{spec-ta} and \eqref{spec-xi}, respectively.
\end{thm}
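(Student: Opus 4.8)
The plan is to reduce everything to Proposition \ref{downaE}, which already establishes a bijection between almost Einstein scales and pairs $(\ta_0,\xi)$ — here I write $\ta_0$ for the scalar component in the decomposition \eqref{polaE} — satisfying the integrability condition \eqref{taxi0}, the BGG equation \eqref{taxi1}, and the inhomogeneous equation \eqref{taxi2}. For the source field $\xi^A$, conditions \eqref{taxi0} and \eqref{taxi1} are exactly \eqref{spec-xi}; moreover, as in the proof of Proposition \ref{downaE}, \eqref{taxi0} is a consequence of \eqref{taxi1} through \eqref{xi-prolong} and \eqref{xi-weyl}, so the conditions on $\xi$ are unchanged and $d_2$ is the dimension of the solution space of the first BGG equation \eqref{taxi1}. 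The whole point is therefore to trivialize the right-hand side $\mc{F}(\xi,\Ph)_{AB}$ of \eqref{taxi2}, and this is exactly where the hypothesis $\Ph_{AB}=D_{(A}\ph_{B)}$ is used.

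First I would prove the key identity that, whenever $\xi^A$ satisfies \eqref{taxi1}, the inhomogeneity is itself a first-BGG image:
\begin{align*}
\mc{F}(\xi,\Ph)_{AB} = \bigl(D_A D_B + \Rho_{AB}\bigr)(\xi^R\ph_R), \qquad \Ph_{AB}=D_{(A}\ph_{B)} .
\end{align*}
The scalar $\xi^R\ph_R$ lies in $\ce(1)$ and is the only contraction of the prescribed weight that can be built from $\xi$ and $\ph$, so up to an overall constant its appearance is forced; the verification is a direct computation, expanding \eqref{Bop} with $\Ph_{AB}=D_{(A}\ph_{B)}$ and substituting $D_A\xi^B = \tfrac1n\de_A{}^B D_R\xi^R$ from \eqref{taxi1}. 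The main obstacle, and the only nonformal step, is the commutation of covariant derivatives needed to reconcile the second-order terms $\xi^R D_A D_B\ph_R$ produced on the two sides: the resulting Weyl-curvature contribution is proportional (up to index placements) to $W_{AB}{}^C{}_R\xi^R$ and vanishes by \eqref{xi-weyl}, a consequence of \eqref{taxi1}, while the remaining curvature terms recombine with lower-order terms into $\Rho_{AB}(\xi^R\ph_R)$. This is precisely why the Weyl condition recorded in \eqref{spec-xi} is indispensable.

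Granting the identity, the proof finishes by a linear change of variable. Setting $\ta := \ta_0 - \xi^R\ph_R$ and using the identity, equation \eqref{taxi2} becomes the homogeneous equation $(D_A D_B + \Rho_{AB})\ta = 0$, which is \eqref{spec-ta}; conversely, for any $(\ta,\xi)$ solving \eqref{spec-ta}--\eqref{spec-xi}, the pair $(\ta_0,\xi) = (\ta+\xi^R\ph_R,\xi)$ solves \eqref{taxi0}--\eqref{taxi2} and hence, by Proposition \ref{downaE}, determines an almost Einstein scale. Thus $(\ta_0,\xi)\mapsto(\ta_0-\xi^R\ph_R,\xi)$ is a linear bijection intertwining the two solution sets, and composing with Proposition \ref{downaE} identifies the almost Einstein scales with the pairs $(\ta,\xi)$ subject to the now-decoupled conditions \eqref{spec-ta} and \eqref{spec-xi}. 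Since these are homogeneous linear conditions on disjoint variables, the solution space is the direct sum of the two, of dimension $d_1+d_2$; this also realizes the scales as an affine space over the almost Einstein scales of the non-modified companion, in agreement with \cite[Theorem 2]{hsstz-walker}. I would note finally that, in contrast with the projectively flat case, the particular solution $\ta_0=\xi^R\ph_R$ is explicit, so no appeal to local exactness of a BGG sequence is required.
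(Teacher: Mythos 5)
Your strategy coincides with the paper's: both proofs rest on the identity expressing $\mc{F}(\xi,\Ph)_{AB}$ as a first BGG image of an explicit density, followed by the shift of $\ta$ by the resulting particular solution. However, there is a concrete error in your treatment of the Weyl‑curvature conditions, rooted in the ``(up to index placements)'' hedge. The condition \eqref{xi-weyl} that follows from \eqref{taxi1} is $W_{AB}{}^C{}_R\,\xi^R=0$ (contraction in the \emph{last} lower index); by the Bianchi symmetry of the projective Weyl tensor this controls only the skew part $\xi^R W_{R[A}{}^C{}_{B]}$. The symmetric part $\xi^R W_{R(A}{}^C{}_{B)}$ is an independent condition: in the proof of Proposition \ref{downaE} it is extracted from the $p$‑linear piece of the Einstein equation (\eqref{condaE2} combined with \eqref{xi-prolong}), not from \eqref{taxi1}. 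So your claim that \eqref{taxi0} is a consequence of \eqref{taxi1}, and hence that $d_2$ is the dimension of the solution space of the first BGG equation alone, is false in general --- the theorem deliberately defines $d_2$ via the coupled system \eqref{spec-xi}. The same confusion enters your verification of the key identity: the curvature term produced by commuting derivatives is precisely $\xi^R W_{R(A}{}^S{}_{B)}\ph_S$, which is killed by the first equation of \eqref{spec-xi}, not by \eqref{xi-weyl}; if one assumes only \eqref{taxi1}, the identity fails. The repair is harmless for the bijection (both conditions of \eqref{spec-xi} are imposed anyway), but the justification must invoke the right one.

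A smaller point: the correct constant is
\begin{align*}
\mc{F}(\xi,\Ph)_{AB} = \tfrac12\,(D_AD_B+\Rho_{AB})(\xi^R\ph_R),
\end{align*}
so the particular solution is $\ta_0=\tfrac12\,\xi^R\ph_R$ and your change of variables should read $\ta=\ta_0-\tfrac12\,\xi^R\ph_R$. You flagged that the constant had to be computed, but then committed to the wrong value. With these two corrections your argument reproduces the paper's proof.
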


\begin{proof}
A straightforward computation reveals that \eqref{Bop} has the form
\begin{align}
\mc{F}(\xi,\Ph)_{AB} = \tfrac12 (D_AD_B + \Rho_{AB}) (\xi^R \ph_R) .
\end{align}
Thus, $\ta=\frac12 \xi^R \ph_R$ is a particular solution to \eqref{taxi2} and all such solutions are parametrized by solutions to the equation \eqref{spec-ta}.
The equations  \eqref{spec-xi} are just the remaining conditions from Proposition \ref{downaE}.
\end{proof}

Note that, compared with the standard extension, 
the modification tensor enters the game so that the rescaled metrics need not be Ricci-flat, see Theorem \ref{summaryaE}.

\subsection{Dimension four} \label{Einstein-2}
Here we examine 4-dimensional modified conformal extensions, i.e. those of 2-dimensional projective structures.
In this case, the Weyl tensor $W_{AB}{}^C{}_D$ vanishes automatically and the key curvature invariant is the Cotton tensor $Y_{CAB}$.
This simplifies the condition \eqref{taxi3}, namely, 
\begin{align}
Y_{CAB}\, \ta = 2\xi^R\, \B_2(\Ph)_{ABCR} .
\label{dim2-taxi}
\end{align}
Also, $\B_2(\Ph)$ is a section of the bundle $\ce_{[AB][CD]}(2)$ which is---in this dimension---a density bundle.
Using the projective volume form $\bep_{AB} \in \ce_{[AB]}(3)$, respectively its inverse $\bep^{AB} \in \ce^{[AB]}(-3)$, it is identified with $\ce(-4)$.
In particular, equation \eqref{dim2-taxi} can be rewritten as 
\begin{align} 
(\star Y)^{A}\, \ta = 2 \xi^A (\star \B_2(\Ph)) ,
\label{dim2-taxi-star}
\end{align}
where $(\star Y)^A := Y_{CDE} \bep^{AC}\bep^{DE} \in \ce^A(-6)$ and $\star \B_2(\Ph) := \B_2(\Ph)_{ABCD} \bep^{AB}\bep^{CD} \in \ce(-4)$. 

\begin{thm} \label{eAdim2}
For modified conformal extensions of 2-dimensional projective structures, 
the dimension of the space of almost Einstein scales is 0, 1, 3 or 6.
\end{thm}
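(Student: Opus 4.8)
The plan is to leverage the dimension count from Theorem~\ref{aEprojflat} together with the dimension-two simplifications developed just above, particularly the rewritten integrability condition \eqref{dim2-taxi-star}. First I would recall that for a $2$-dimensional projective structure the Weyl tensor $W_{AB}{}^C{}_D$ vanishes identically, so the integrability condition \eqref{taxi0} is automatic and the conformal Killing-type equation $(D_A\xi^B)_0=0$ from \eqref{taxi1} is the only differential constraint on $\xi^A$. The solution space of $(D_A\xi^B)_0=0$ on a $2$-dimensional (locally flat or general) projective structure is the space of infinitesimal projective transformations; in dimension $n$ this space has dimension at most $n^2+2n=\dim\mathfrak{sl}(n+1)$, which for $n=2$ gives the maximal value $8$. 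Then the relevant space parametrizing $\xi^A$ is cut out further by the scalar condition \eqref{dim2-taxi-star}, so I would analyze how that extra condition reduces the count.

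Next I would organize the argument around the two independent pieces identified in Proposition~\ref{downaE}: the component $\xi^A\in\ce^A(-1)$ and the component $\ta\in\ce(1)$. The key structural fact to exploit is \eqref{dim2-taxi-star}, namely $(\star Y)^A\,\ta = 2\,\xi^A(\star\B_2(\Ph))$, which couples $\ta$ and $\xi^A$ through the two sections $(\star Y)^A\in\ce^A(-6)$ and $\star\B_2(\Ph)\in\ce(-4)$. The plan is to split into cases according to whether these two quantities vanish. If both $(\star Y)^A$ and $\star\B_2(\Ph)$ vanish, the coupling \eqref{dim2-taxi-star} is vacuous and $\ta$, $\xi^A$ vary freely over their respective solution spaces; the dimension is then $d_\ta + d_\xi$, where $d_\ta$ is the dimension of solutions to the BGG equation \eqref{taxi2} (with vanishing right-hand side, i.e.\ the almost Ricci-flat scales, of dimension $n+1=3$) and $d_\xi$ is the dimension of solutions to $(D_A\xi^B)_0=0$ subject to the remaining constraint. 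If exactly one of them is nonzero, or both are nonzero and proportional pointwise, the coupling forces linear relations between the free parameters, successively lowering the total dimension. I expect the admissible totals to be exactly $0,1,3,6$ once all these subcases are enumerated, with the value $6$ occurring in the ``maximally degenerate'' situation and the intermediate values arising when the coupling \eqref{dim2-taxi-star} imposes one or more independent constraints.

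The main obstacle will be the careful bookkeeping in the case analysis: I must verify that no other totals (for instance $2$, $4$, or $5$) can occur, which requires understanding precisely how \eqref{dim2-taxi-star} intersects the solution spaces of \eqref{taxi1} and \eqref{taxi2}. Concretely, the difficulty is that $\ta$ and $\xi^A$ are not fully independent once \eqref{dim2-taxi-star} is imposed, so I cannot simply add dimension counts; instead I would treat \eqref{dim2-taxi-star} as a linear map on the product of the two solution spaces and compute the dimension of its kernel in each configuration of $(\star Y)^A$ and $\star\B_2(\Ph)$. I would use the fact (from Theorem~\ref{summaryaE}) that the $\si_-$-component always contributes the full space of almost Ricci-flat scales, which is rigid and locally $(n+1)$-dimensional in the flat case, while the genuine coupling only affects the $\si_+$-component built from $\xi^A$. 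By enumerating the possibilities---both sections zero, one nonzero, or both nonzero with generic versus degenerate alignment---I would show that the resulting total lands in $\{0,1,3,6\}$ and that each value is realized, thereby establishing the theorem.
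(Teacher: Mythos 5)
Your overall strategy---a case analysis on the vanishing of $(\star Y)^A$ and $\star\B_2(\Ph)$, using \eqref{dim2-taxi-star} to couple $\ta$ and $\xi^A$---is in the same spirit as the paper's, but it contains a critical error that invalidates the dimension count. The equation $(D_A\xi^B)_0=0$ from \eqref{taxi1} on $\xi^A\in\ce^A(-1)$ is \emph{not} the infinitesimal projective symmetry equation (that is the second-order equation on unweighted vector fields associated to the adjoint tractor bundle, with solution space of dimension up to $n(n+2)=8$). It is the first BGG equation for the \emph{standard} tractor bundle (section \ref{A3}), which in dimension $2$ is identified via $\ce^A(-1)\cong\ce_A(2)$ with the projective Killing form equation of section \ref{A2}; its solution space has dimension at most $\tfrac12 n(n+1)=3$, attained only in the projectively flat case. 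With your bound of $8$ the enumeration of admissible totals cannot close up to $\{0,1,3,6\}$.

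Moreover, the decisive ingredient for the non-flat case is missing: when $Y\neq0$, the integrability condition \eqref{compBGGce_A2} forces any solution of \eqref{taxi1} to be pointwise proportional to $\star Y$, whence that solution space is at most $1$-dimensional, and \eqref{dim2-taxi-star} then determines $\ta$ uniquely from $\xi^A$; this is how the paper obtains $d\le1$ there. Your plan to compute the kernel of \eqref{dim2-taxi-star} on the product of the two solution spaces also overlooks that \eqref{dim2-taxi-star} is only a necessary consequence of \eqref{taxi2}, not equivalent to it, and that \eqref{taxi2} is an inhomogeneous equation whose right-hand side depends on $\xi^A$ (its local solvability in the flat case is governed by $\xi^R\,\B_2(\Ph)_{ABCR}=0$ rather than by \eqref{dim2-taxi-star} alone). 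Once these points are repaired, the three cases --- non-flat, giving $d\le 1$; flat with $\B_2(\Ph)\neq0$, where $\xi^A=0$ is forced and $d=3$; flat with $\B_2(\Ph)=0$, where the extension is conformally flat and $d=6$ --- yield exactly the stated list, which is the paper's argument.
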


\begin{proof}
Let the dimension of the space of almost Einstein scales be denoted by $d$.
Whenever we consider sections that do not vanish identically, we restrict off their zero sets.

For non-flat projective structures, the solution space to \eqref{taxi1} is at most 1-dimensional;
this follows from \eqref{compBGGce_A2} and the neighbouring discussion.
Indeed, we have $\ce^A(-1) \cong \ce_A(2)$ thus the equation \eqref{taxi1} corresponds to the first BGG equation \eqref{BGGce_A2}, for which \eqref{compBGGce_A2} gives the related integrability condition.
Any solution $\xi^A\in\ce^A(-1)$ determines $\ta\in\ce(1)$ uniquely via the condition \eqref{dim2-taxi-star}. 
Such a pair defines an almost Einstein scale if and only if it satisfies \eqref{taxi2}.
Altogether, we conclude that $d\le 1$ in such cases.

For flat projective structures and conformal extensions with $\B_2(\Ph)\ne 0$, it follows from \eqref{dim2-taxi-star} that $\xi^A=0$.
The conditions from Proposition \ref{downaE} are reduced to the single equation $(D_{A} D_{B} + \Rho_{AB}) \ta = 0$, whose solution space is 3-dimensional, cf. the proof of Theorem \ref{aEprojflat}.
Thus, $d=3$ in such cases.

For flat projective structures and conformal extensions with $\B_2(\Ph)=0$, the induced conformal structure is flat, cf.\ Theorem \ref{relateFlat}.
In such cases, the dimension $d$ is maximal possible, which is well known to be $d=6$.
\end{proof}

All values listed in Theorem \ref{eAdim2} are realizable:
\begin{enumerate}[(1)]
\item[(0)] 
For conformal extensions of generic projective structures there are no almost Einstein scales.
\item[(1)] 
For the projective structure given by the Levi-Civita connection of a generic surface of revolution, let us consider its standard (non-modified) conformal extension. 
The Killing field generating the rotation gives rise to the vector field satisfying \eqref{taxi1}, see \cite[section 7.1]{hsstz-walker}, and this is the only source for almost Einstein scales.
\item[(3)] 
A generic modified conformal extension of flat projective structure has 3-dimensional space of almost Einstein scales; a particular example of this type is discussed in section \ref{submax}.
\item[(6)]
The standard conformal extension of the flat projective structure has 6-dimensional space of almost Einstein scales.
\end{enumerate}

\section{Symmetries} \label{Symm}

In this section, we characterize infinitesimal conformal symmetries of a modified PW metric in terms of underlying projective data. Before we come to the main statement in Theorem \ref{summaryCKE}, we need several preparatory observations.


As the first step, we express the conformal Killing equation and its prolongation for the modified PW metric in terms of the original (non-modified) one.
For a conformal Killing field  ${v}^a$ of the modified PW metric $\ol{g}_{ab}=\wt{g}_{ab}+\Phi_{ab}$, the trace-free part of $\ol{D}_{(a}\ol{v}_{b)}$ with respect to $\ol{g}_{ab}$ vanishes, where $\ol{D}_a$ is the Levi-Civita connection of $\ol{g}_{ab}$ and $\ol{v}_b = v_b + v^r\Phi_{rb}$. 
Compactly written,
\begin{align}
\ol{D}_{(a}\ol{v}_{b)} -\psi\ol{g}_{ab} =0 ,
\label{CKE}
\end{align}
where $\psi$ is the divergence that need not be specified at the moment.
An expansion of this condition in the sense of Lemma~\ref{relateD} gives the needed expressions.
In comparison with the non-modified case, an extra term ($\om_{ab}$) and its further derivatives ($\om'_{abc}$ and $\om''_{ab}$) are contained in the final formulas:

\begin{lem}\label{relateCKE}
Let $v^a \in \wt{\ce}^a$ be a conformal Killing field of the modified PW metric $\ol{g}_{ab}=\wt{g}_{ab}+\Phi_{ab}$
and let us decompose
\begin{equation} \label{CKEol}
\wt{D}_a v_b = {\mu}_{ab} + {\ps} \wt{g}_{ab} + \om_{ab},
\end{equation}
where ${\mu}_{ab} = \wt{D}_{[a} v_{b]}$, ${\ps} = \tfrac{1}{2n} \wt{D}_r v^r$ 
and $\om_{ab} = \wt{D}_{(a} v_{b)_0}$.
Then:
\begin{enumerate}[(i)]
\item The symmetric trace-free part of $\wt{D}_a v_b$ is
\begin{align}
\om_{ab} = -\tfrac12(\mathcal{L}_v \Ph)_{ab} .
\label{CKEom}
\end{align}
In particular, it is strictly horizontal, i.e.,  $\chi^{aA}\om_{ab}=\chi^{bB}\om_{ab}=0$.
\item Differential consequences of \eqref{CKEol} are
\begin{align}
\wt{D}_a {\ps} &= \wt{\Rho}_{ar} v^r - {\be}_a , \label{CKEprol1} \\
\wt{D}_{a} {\mu}_{bc} &= -2\wt{g}_{a[b} {\be}_{c]} - 2\wt{\Rho}_{a[b}v_{c]} - \wt{W}_{bcar}v^r + \om'_{abc} , \label{CKEprol2} \\  
\wt{D}_a {\be}_b &= - \wt{Y}_{abr}v^r - {\ps} \wt{\Rho}_{ab} + \wt{\Rho}_a{}^r {\mu}_{rb} + \om''_{ab} ,\label{CKEprol3} 
\end{align}
for some ${\be}_a\in\wt\ce_a$, $\om'_{abc}\in\wt\ce_{a[bc]}[2]$ and $\om''_{ab}\in\wt\ce_{ab}$ satisfying
$\chi^{aA} \om'_{abc}=0$ and $\chi^{bB} \om''_{ab}=0$, respectively.
\end{enumerate}
\end{lem}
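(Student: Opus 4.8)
The plan is to rephrase the conformal Killing equation \eqref{CKE} through the Lie derivative and exploit its linearity in the metric. Since the Lie derivative is connection-independent, $v^a$ is a conformal Killing field of $\ol{g}_{ab}$ precisely when $\mathcal{L}_v\ol{g}_{ab}=2\psi_0\,\ol{g}_{ab}$ for some function $\psi_0$, and $\mathcal{L}_v\ol{g}_{ab}=2\ol{D}_{(a}\ol{v}_{b)}$. Writing $\ol{g}=\wt{g}+\Phi$ and using $\mathcal{L}_v\wt{g}_{ab}=2\wt{D}_{(a}v_{b)}$, this becomes
\begin{align*}
2\wt{D}_{(a}v_{b)}+(\mathcal{L}_v\Phi)_{ab}=2\psi_0(\wt{g}_{ab}+\Phi_{ab}).
\end{align*}
First I would check that $\psi_0$ coincides with $\psi=\tfrac{1}{2n}\wt{D}_r v^r$: by Lemma \ref{relateD} the difference tensor satisfies $F_r{}^r{}_d=0$ (because $\Phi_{ab}$ is symmetric and trace-free), so $\ol{D}_r v^r=\wt{D}_r v^r$ and hence $\psi_0=\psi$. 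Substituting $\wt{D}_{(a}v_{b)}=\psi\wt{g}_{ab}+\om_{ab}$ from \eqref{CKEol} and cancelling the $\psi\wt{g}_{ab}$ terms leaves $2\om_{ab}+(\mathcal{L}_v\Phi)_{ab}=2\psi\Phi_{ab}$; absorbing the divergence term $\psi\Phi_{ab}$ into the (conformally weighted) Lie derivative of the weight-two density $\Phi$ gives \eqref{CKEom}. Strict horizontality then follows by contracting with a vertical direction $\chi^{aA}$ and using that $\chi$ is $\wt{D}$-parallel and $\Phi$ is strictly horizontal, so that $\chi^{aA}\Phi_{ab}=0$; the only potentially surviving term is controlled by the fact that $v$, being a conformal symmetry, preserves the vertical distribution $V=\ker\chi$.

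\textbf{Part (ii).} For the prolongation I would carry out the standard prolongation of the conformal Killing operator, but now for the Levi-Civita connection $\wt{D}$ of the \emph{standard} PW metric, treating the symmetric trace-free part $\om_{ab}$ from (i) as an inhomogeneity. Concretely, I differentiate the decomposition \eqref{CKEol}, skew-symmetrize, and apply the Ricci identity for $\wt{D}$, then project the resulting expression for $\wt{D}_c\wt{D}_a v_b$ onto its irreducible components. Equation \eqref{CKEprol1} I read as the definition of the secondary field $\be_a$, namely $\be_a=\wt{\Rho}_{ar}v^r-\wt{D}_a\psi$. Differentiating $\mu_{ab}=\wt{D}_{[a}v_{b]}$ and $\be_a$ and then invoking the Weyl decomposition $\wt{R}=\wt{W}+(\text{Schouten terms})$ together with the contracted Bianchi (Cotton) identity $2\wt{D}_{[a}\wt{\Rho}_{b]c}=\wt{Y}_{cab}$ produces \eqref{CKEprol2} and \eqref{CKEprol3}. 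In the non-modified case ($\Phi=0$, hence $\om=0$) this is exactly the textbook conformal Killing prolongation with no error terms; the only new feature is that each differentiation of the $\om_{ab}$ inhomogeneity contributes extra terms, and $\om'_{abc}\in\wt\ce_{a[bc]}[2]$ and $\om''_{ab}$ are defined to collect precisely these $\wt{D}\om$-contributions after projection onto the appropriate symmetry types.

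\textbf{Main obstacle.} The real work, and the step I expect to be most delicate, is twofold: bookkeeping all correction terms so that they genuinely assemble into tensors with the stated Young symmetries, and verifying the horizontality constraints $\chi^{aA}\om'_{abc}=0$ and $\chi^{bB}\om''_{ab}=0$. Both horizontality claims I would establish uniformly: every correction term is built from $\om_{ab}$ (strictly horizontal by (i)), from the difference tensor $F$ (a section of $V\otimes V\otimes V$ by Lemma \ref{relateD}), and from $\Phi_{ab}$, all contracted against the $\wt{D}$-parallel spinor $\chi$; contracting the prolongation identities with a single vertical direction and commuting it through $\wt{D}$ (which it commutes with, by parallelity) kills every non-horizontal contribution. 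An alternative organization, useful as a cross-check, would start from the known prolongation for $\ol{g}$ and convert each $\ol{D}$, $\ol{\Rho}$, $\ol{W}$, $\ol{Y}$ to its $\wt{g}$-counterpart via Lemma \ref{relateD}, Proposition \ref{relateR} (giving $\ol{\Rho}=\wt{\Rho}$) and Proposition \ref{relateW} (giving that $\ol{W}-\wt{W}$ is strictly horizontal); the correction terms arising from these substitutions are again strictly horizontal, which is exactly what makes the two routes agree.
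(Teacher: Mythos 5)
Your argument is correct and follows essentially the same route as the paper: part (i) is the paper's computation of $\ol{D}_{(a}\ol{v}_{b)}=\wt{D}_{(a}v_{b)}+\tfrac12(\mathcal{L}_v\Phi)_{ab}$ repackaged via $\mathcal{L}_v\ol{g}=\mathcal{L}_v\wt{g}+\mathcal{L}_v\Phi$ (with the same absorption of the $\psi\Phi_{ab}$ term into the weighted Lie derivative), and part (ii) is the same prolongation of \eqref{CKEol} with $\om_{ab}$ as inhomogeneity, the paper likewise obtaining $\om'_{abc}=2\wt{D}_{[b}\om_{c]a}$ and $\om''_{ab}$ as a combination of $\wt{D}_r\wt{D}^r\om_{ab}$ and $\wt{D}_a\wt{D}^r\om_{br}$, whence horizontality follows from that of $\om_{ab}$ and the parallelity of $\chi$. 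Your explicit verification that $F_r{}^r{}_d=0$, so that the two divergences agree and $\psi_0=\psi$, is left implicit in the paper but is a correct and worthwhile detail.
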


\begin{proof}
(i) 
On the one hand,  it follows from \eqref{wtolD} that, for $\ol{v}_b = v_b + v^r\Phi_{rb}$,
\begin{align*}
\ol{D}_a \ol{v}_b
&= \wt{D}_a v_b - v_r  \wt{D}_{(a} \Ph_{b)}{}^r + \tfrac12 v^r \wt{D}_r \Ph_{ab} + (\wt{D}_{a} v^r) \Ph_{br} + v^r \wt{D}_a \Ph_{br} = \\
&= \wt{D}_a v_b + \tfrac12 v^r \wt{D}_r \Ph_{ab} + (\wt{D}_{a} v^r) \Ph_{br} + v_r \wt{D}_{[a} \Ph_{b]}{}^r .
\end{align*}
The conformal Killing equation \eqref{CKE} then reads as
\begin{equation} \label{CKE-ol}
\wt{D}_{(a} v_{b)} + \tfrac12 v^r \wt{D}_r \Ph_{ab} + (\wt{D}_{(a} v^r) \Ph_{b)r} - \psi(\wt g_{ab}+\Phi_{ab}) =0,
\end{equation}
where ${\ps} = \tfrac{1}{2n} \wt{D}_r v^r$.
On the other hand, the Lie derivative of $\Phi_{ab}\in\wt\ce_{ab}[2]$ in the direction of $v^a$ is
\begin{align}
(\mathcal{L}_v \Ph)_{ab} = v^r \wt{D}_r \Ph_{ab}  + 2(\wt{D}_{(a} v^r) \Ph_{b)r} - \tfrac{1}{n} (\wt{D}_rv^r) \Ph_{ab} 
\label{Lder}
\end{align}
(where the last coefficient reflects the conventions from \eqref{conf-w}).
Thus, the equation \eqref{CKE-ol} can be written as
\begin{align*}
\wt{D}_{(a} v_{b)} + \tfrac12(\mathcal{L}_v \Ph)_{ab} - \psi\wt{g}_{ab} = 0 , 
\end{align*}
which gives \eqref{CKEom}.
Since $\Phi_{ab}$ is strictly horizontal and the flow of $v^a$ preserves the vertical distribution, the rest follows.

(ii) 
To derive the differential consequences of \eqref{CKEol}, we shall mimic the computation of standard prolonged systems, cf.\ \cite{Gover2008}. 
Equation \eqref{CKEprol1} is read as the definition of $\be_a$.
Applying $\wt{D}_c$ to \eqref{CKEol}, commuting covariant derivatives on the left-hand side and skewing over $b$ and $c$, the equation \eqref{CKEprol2} follows after some manipulation.
From the computation it further follows that $\om'_{abc} = 2\wt{D}_{[b} \om_{c]a}$.
Since $\om_{ab}$ is strictly horizontal, the property $\chi^{aA} \om'_{abc}=0$ holds.

Further, applying $\wt g^{ab}$ to \eqref{CKEprol2} yields
\begin{align}
\wt{D}^r \wt{\mu}_{ra} = -(2n-1) \wt{\be}_a + \wt{\Rho}_{ar}v^r + \wt{D}^r \om_{ra}.
\label{CKEpom}
\end{align}
Finally, applying $\wt{D}^c$ to \eqref{CKEprol2}, commuting covariant derivatives and using \eqref{CKEpom}, the equation \eqref{CKEprol3} follows after a tedious but straightforward computation.
From the computation it further follows that $\om''_{ab}$ is a linear combination of $\wt D_r\wt D^r \om_{ab}$ and $\wt{D}_a \wt{D}^r \om_{br}$.
In particular, the property $\chi^{bB} \om''_{ab}=0$ holds.
\end{proof}


As the next step, to  express infinitesimal conformal symmetries in underlying projective terms, we shall need several projectively invariant operators.
Primarily, we employ the first BGG operators from \eqref{BGGce^AB(-2)} and  \eqref{BGGce_A2} and the operator \eqref{BGGwt}.
The corresponding equations, respectively their solutions, are expounded as follows:
The equation associated to \eqref{BGGce^AB(-2)} is equivalent to the system 
\begin{align}
& D_A w^{BC} - 2 \de_{A}{}^{[B} \nu^{C]} =0 , \quad \text{where} \quad \nu^C = \tfrac{1}{n-1} D_Rw^{RC}, 
\label{wopProl1} \\
& D_A \nu^B + \Rho_{AR}w^{RB} + \tfrac{1}{2(n-2)}  w^{RS} W_{RS}{}^B{}_A=0 .
\label{wopProl2}
\end{align}
Solutions associated to \eqref{BGGwt} and \eqref{BGGce_A2} are the infinitesimal projective symmetries and the so-called projective \textit{Killing forms}, respectively.
Moreover, we shall need the bilinear differential operator
$\mc{F}:\ce^{[CD]}(-2) \times \ce_{(AB)}(2) \to \ce^C{}_{(AB)}$ given by
\begin{align} 
\label{B'op}
\mc{F}(w, \Ph )^C{}_{AB} = w^{RC} \bigl( D_{(A} \Ph_{B)R} - \tfrac12 D_R \Ph_{AB} \bigr) + \nu^C\Ph_{AB} , 
\end{align}
where $\nu^C$ is as above.
One can check it is projectively invariant too.
Analogously to \eqref{Lder}, we also have the Lie derivative on sections of $\ce_{(AB)}(2)$, 
\begin{equation} 
\label{Lop}
(\mathcal{L}_v \Ph)_{AB} = 
v^R D_R \Ph_{AB} + 2 (D_{(A} v^R) \Ph_{B)R} - \tfrac{2}{n+1} (D_Rv^R) \Ph_{AB} ,
\end{equation}
where $v^R \in \ce^R$
(the last coefficient reflects the conventions from \eqref{proj-w}).

We are ready to characterize infinitesimal conformal symmetries of a modified PW metric. 
The description is based on the horizontal--vertical decomposition of conformal vector fields whose components are identified with projectively invariant objects:

\begin{prop} \label{downCKE}
Let $v^a = \ups^A \check{\eta}^a_A + \be_A\chi^{aA}$ 
be the decomposition of a vector field with respect to $\wt{g}_{ab}$ as in \eqref{hor-vert}.
Then $v^a$ is a conformal Killing field of the modified PW metric $\ol{g}_{ab}=\wt{g}_{ab}+\Phi_{ab}$ if and only if $\ups^A$ and $\be_A$ are polynomials in $p_A$,
\begin{equation} \label{polCKE}
\ups^A = w^{AB} p_B + v^A, \quad \be_A = \ps_A{}^{BC} p_Bp_C + \ph_A{}^B p_B + \al_A,
\end{equation}
and the underlying sections
$w^{AB} \in \ce^{AB}(-2)$, $v^A\in\ce^A$, $\ps_A{}^{BC} \in \ce_A{}^{(BC)}(-2)$, $\ph_A{}^B \in \ce_A{}^B$, $\al_A \in \ce_A^{}(2)$
satisfy the algebraic condition
\begin{align}
w^{(AB)} =0 , 
\label{wpsph-1}
\end{align}
the integrability condition
\begin{align} 
w^{R(C} W_{R(A}{}^{D)}{}_{B)} =0, \label{wpsph0}
\end{align}
the differential conditions
\begin{align} 
& \left(D_A w^{BC}\right)_0 =0, \label{wpsph1} \\
& \left( D_A D_B v^C + \Rho_{AB} v^C + v^R W_{R(A}{}^C{}_{B)} \right)_0 = 
- \left( \mc{F}(w,\Ph)^C{}_{AB} \right)_0 , \label{wpsph2} \\
& D_{(A}^{} \al_{B)} = -\tfrac12 (\mathcal{L}_v \Ph)_{AB} +\tfrac12 \mathring{\ps} \, \Ph_{AB} , \label{wpsph3} 
\end{align}
for some $\mathring{\ps} \in\ce$, such that 
\begin{align}
D_A \mathring{\ps} = \tfrac2{n+1}\,\mc{F}(w,\Ph)^R{}_{AR} , 
\label{wpsph4}
\end{align}
where $\nu^C = \tfrac{1}{n-1} D_R w^{RC}$ as in \eqref{wopProl1}, 
and the remaining coefficients are given by
\begin{align}
& \ps_A{}^{BC} =\de_A{}^{(B} \nu^{C)}, \label{wpsph5} \\
& \ph_A{}^B =  - \left( D_A v^B  +w^{SB} \Ph_{SA} \right)_0 
+  \left( \tfrac{n-1}{n(n+1)} (D_S v^S) + \mathring{\ps} \right)  \de_A{}^B .
\label{wpsph6} 
\end{align}
\end{prop}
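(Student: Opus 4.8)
The plan is to mirror the proof of Proposition \ref{downaE}, now applied to the prolonged conformal Killing system rather than to the almost Einstein equation. The starting point is Lemma \ref{relateCKE}, which rewrites the conformal Killing equation \eqref{CKE} of $\ol{g}_{ab}$ and its full prolongation \eqref{CKEol}--\eqref{CKEprol3} purely in terms of the non-modified metric $\wt{g}_{ab}$, at the cost of the strictly horizontal source terms $\om_{ab}=-\tfrac12(\mathcal{L}_v\Ph)_{ab}$, $\om'_{abc}$ and $\om''_{ab}$. I would feed the horizontal--vertical decomposition \eqref{hor-vert} into this closed first-order system and exploit that $\wt{\Rho}_{ab}$ and $\Ph_{ab}$ are strictly horizontal, while $\wt{g}_{ab}$ vanishes under double vertical contraction, so that contractions with the vertical frame vectors $\chi^{aA}$ strip away these tensors.

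First I would establish the polynomial form \eqref{polCKE}. As in the almost Einstein case, taking repeated vertical derivatives of \eqref{CKEol}--\eqref{CKEprol3} — that is, contracting with $\chi^{aA}$ and using $\chi^{aA}\wt{D}_a=\tfrac{\partial}{\partial p_A}$ — forces the high $p$-derivatives of the components to terminate, because the right-hand sides are either strictly horizontal or linear in the curvature, which is itself the $p$-independent pullback of projective curvature. This bounds the $p$-degrees, and the $\mathcal{L}_k$-grading pins them precisely: since $k^a=2p_A\chi^{aA}$ is the fibre Euler field by \eqref{p-k-eta}, the vertical vectors $\chi^{aA}$ sit two grading levels below the horizontal lifts $\check{\eta}^a_A$ (each $p_B$ carrying weight $+2$), so in a homogeneous component the vertical coefficient $\be_A$ must carry exactly one more power of $p$ than the horizontal coefficient $\ups^A$. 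Together with the homothety relation \eqref{k-homothety} this yields $\deg\ups^A\le 1$ and $\deg\be_A\le 2$, and the coefficients are named as in \eqref{polCKE}.

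Next I would substitute \eqref{polCKE} into each prolonged equation and expand the covariant derivatives by \eqref{Dv-no-p}, sorting the outcome by powers of $p_A$ and by the frame components $\chi_{(a}{}^A\check{\eta}_{b)B}$ and $\chi_{(a}{}^A\chi_{b)}{}^B$; here one must remember that differentiation couples to the fibre coordinates through the curvature term $\ups^C R_{CB}{}^D{}_A\,p_D$ of \eqref{Dv-no-p}, which is exactly the mechanism shifting $p$-degrees and feeding projective curvature into \eqref{wpsph0} and \eqref{wpsph2}. Matching the top-degree terms gives the algebraic skew-symmetry \eqref{wpsph-1} and the relation \eqref{wpsph5} expressing $\ps_A{}^{BC}$ through $\nu^C=\tfrac1{n-1}D_Rw^{RC}$; the intermediate terms produce \eqref{wpsph6} for $\ph_A{}^B$ together with the scalar $\mathring{\ps}$ and its gradient \eqref{wpsph4}; and the bottom terms yield the genuine differential conditions \eqref{wpsph1}, \eqref{wpsph2} and \eqref{wpsph3}. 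The source terms cascade through this triangular system: the $p$-linear part $w^{AB}p_B$ of $\ups^A$ organizes into the bilinear operator $\mc{F}(w,\Ph)$ of \eqref{B'op}, entering the $v^A$-equation \eqref{wpsph2} and the trace relation \eqref{wpsph4}, whereas the constant part $v^A$ reproduces the projective Lie derivative $\mathcal{L}_v\Ph$ of \eqref{Lop} in the $\al_A$-equation \eqref{wpsph3}. The integrability condition \eqref{wpsph0} then emerges by combining \eqref{wpsph1} with its standard prolongation, exactly as $\xi^RW=0$ was deduced in Proposition \ref{downaE}.

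The hard part will be the bookkeeping at the intermediate $p$-degrees, where the curvature terms $\wt{W}$ and $\wt{Y}$ in \eqref{CKEprol2}--\eqref{CKEprol3} overlap with the $\Ph$-dependent contributions carried by $\om'$ and $\om''$, and one must isolate the projectively invariant combinations cleanly. In particular, reconciling the conformal weight normalization in \eqref{Lder} with the projective one in \eqref{Lop} (the factors $\tfrac1n$ versus $\tfrac2{n+1}$) and verifying that $\om'$, $\om''$ recombine precisely into $\mc{F}(w,\Ph)$ and its trace is the delicate step — this is what forces $\mathring{\ps}$ to satisfy \eqref{wpsph4} rather than remaining free. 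With the forward direction in hand, the converse is routine: given data satisfying \eqref{wpsph-1}--\eqref{wpsph6}, one reconstructs $v^a$ via \eqref{polCKE}, reverses the expansion using \eqref{Dv-no-p} and \eqref{eq-for-Josef}, and checks that the prolonged system, hence the conformal Killing equation \eqref{CKE} for $\ol{g}_{ab}$, is satisfied.
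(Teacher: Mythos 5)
Your proposal is correct and follows essentially the same route as the paper's proof: polynomiality in $p_A$ via repeated vertical contractions of the prolonged system from Lemma \ref{relateCKE}, then substitution of the polynomial ansatz into the rewritten conformal Killing equation, expansion via \eqref{Dv-no-p}, and matching of frame components ($\check{\eta}\check{\eta}$, $\chi\check{\eta}$, $\chi\chi$) and $p$-degrees to extract \eqref{wpsph-1}--\eqref{wpsph6}, with \eqref{wpsph0} obtained by combining the $p^2$-part of the $\chi\chi$-component with the prolongation \eqref{wopProl2} of \eqref{wpsph1}. The only cosmetic differences are the ordering (the paper establishes polynomiality at the end) and that the substitution need only be carried out in the Killing equation itself, the higher prolonged equations \eqref{CKEprol1}--\eqref{CKEprol3} being used solely for the degree bounds.
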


Note that, for $\Phi_{AB}=0$, the right-hand sides of \eqref{wpsph2}--\eqref{wpsph4} vanish, i.e., 
each of $w^{AB}$, $v^A$,  $\al_A$ is a solution to the corresponding BGG equation and $\mathring{\psi}$ is constant.
Further comments relating the previous description to the non-modified case are at the end of section.

\begin{proof}
Let $v^a = \ups^A \check{\eta}^a_A + \be_A\chi^{aA}$ be a vector field of the form \eqref{polCKE}.
We start by expressing the conformal Killing equation of $\ol{g}_{ab}=\wt{g}_{ab}+\Phi_{ab}$ in terms of the underlying objects 
$w^{AB}$, $\ps_A{}^{BC}$, $\ph_A{}^B$ and $\al_A$.
The proper weights declared in the statement follow from the discussion around \eqref{p-k-eta} and \eqref{hor-vert} and need not be emphasized below.

From the proof of Lemma \ref{relateCKE} we know that \eqref{CKE} is equivalent to 
\begin{align*}
\ol{D}_{(a}\ol{v}_{b)} = \wt{D}_{(a} v_{b)} + \tfrac12(\mathcal{L}_v \Ph)_{ab} + \psi\Phi_{ab} ,
\end{align*}
where $\ol{v}_b = v_b + v^r \Ph_{rb}$ and ${\ps} = \tfrac{1}{2n} \wt{D}_r v^r$.
Now, one has to substitute \eqref{polCKE}, respectively \eqref{phi-chi-chi}, and expand according to \eqref{Dv-no-p}, taking into account \eqref{p-k-eta}.
Considering the decomposition of the form
\begin{equation*} \label{decmCKE}
\ol{D}_{(a} \bar{v}_{b)} = \Th^{AB}\, \check{\eta}_{(a|A|} \check{\eta}_{b)B} + \Th'_A{}^B \, \chi_{(a}{}^A\check{\eta}_{b)B} + \Th''_{AB}\, \chi_{(a}{}^A \chi_{b)}{}^B ,
\end{equation*}
direct, albeit rather lengthy, computation reveals that
\begin{align}
\Th^{AB} &= w^{(AB)}, \label{Th1} \\
\Th'_A{}^B &= \big( D_A w^{BR} + 2\ps_A{}^{BR} \big) \, p_R + \big( D_A v^B  +w^{SB} \Ph_{SA} + \ph_A{}^B \big), \label{Th2}  \\
\begin{split}
\Th''_{AB} &= \big( D_{(A} \ps_{B)}{}^{RS} - w^{T(R} W_{T(A}{}^{S)}{}_{B)} +w^{T(R} \Rho_{T(A}\de^{S)}{}_{B)} \big) \, p_Rp_S + \, \\
& \hskip4em + \big( D_{(A} \ph_{B)}{}^R + D_{(A} (w^{SR}\Ph_{B)S})  - v^S W_{S(A}{}^R{}_{B)} -v^R \Rho_{AB} + \\
& \hskip8em + v^S \Rho_{S(A} \de^R{}_{B)}  - w^{SR} D_{(A}\Ph_{B)S} + \tfrac12 w^{SR} D_{S}\Ph_{AB} \big) \, p_R \, + \\
& \hskip4em +\big( D_{(A} \al_{B)} + (D_{(A} v^R) \Ph_{B)R} + \tfrac12  v^S D_{S}\Ph_{AB} \big) .  \label{Th3}
\end{split}
\end{align}
With the reference to \eqref{g-chi-eta} and \eqref{phi-chi-chi}, the conformal Killing equation \eqref{CKE}
reads as
\begin{align}
\Th^{AB}\, \check{\eta}_{(a|A|} \check{\eta}_{b)B} + \big( \Th'_A{}^B -2\psi\de_A^B \big) \chi_{(a}{}^A\check{\eta}_{b)B} + \big( \Th''_{AB} -\psi\Phi_{AB} \big) \chi_{(a}{}^A \chi_{b)}{}^B = 0 ,
\label{CKE-chi-eta}
\end{align}
where 
\begin{align}
\psi =\tfrac1{2n} \big( D_S w^{SR} + 2\ps_S{}^{SR} \big) p_R + \tfrac1{2n} \big( D_S v^S + \ph_S{}^S \big).
\label{psi}
\end{align}
The three summands in \eqref{CKE-chi-eta} must vanish separately.
Taking into account the fact that the expressions \eqref{Th1}--\eqref{Th3} and \eqref{psi} are polynomial in $p_A$, 
the first condition $\Th^{AB} =0$ is
\begin{align}
w^{(AB)}=0 , \label{condCKE0}
\end{align}
the second condition $\Th'_A{}^B -2\psi\de_A^B =0$ is equivalent to the pair 
\begin{align}
& D_A w^{BR} + 2\ps_A{}^{BR} - \tfrac{1}{n} \de_A^B \bigl( D_S w^{SR} + 2\ps_S{}^{SR} \bigr) =0,  \label{condCKE1} \\
& \bigl( D_A v^B  +w^{SB} \Ph_{SA} + \ph_A{}^B \bigr)_0 =0 , \label{condCKE2}
\end{align}
and the third condition $\Th''_{AB} -\psi\Phi_{AB}=0$ is equivalent to the triple
\begin{align} 
&  D_{(A} \ps_{B)}{}^{RS} - w^{T(R} W_{T(A}{}^{S)}{}_{B)}  +w^{T(R} \Rho_{T(A}\de^{S)}{}_{B)} =0, \label{condCKE3} \\
\begin{split}
& D_{(A}^{} \ph_{B)}{}^R + D_{(A} (w^{SR}\Ph_{B)S})  - v^S W_{S(A}{}^R{}_{B)} -v^R \Rho_{AB} + 
\\ & \hskip2em 
+ v^S \Rho_{S(A} \de^R{}_{B)} - w^{SR} D_{(A}\Ph_{B)S} + \tfrac12 w^{SR} D_{S}\Ph_{AB} -\tfrac1{2n} \big( D_S w^{SR} + 2\ps_S{}^{SR} \big) = 0, \label{condCKE4} 
\end{split}
\\
& D_{(A} \al_{B)} + (D_{(A} v^R) \Ph_{B)R} + \tfrac12  v^S D_{S}\Ph_{AB} - \tfrac1{2n} \big( D_S v^S + \ph_S{}^S \big)\Phi_{AB} =0. \label{condCKE5} 
\end{align}
The system \eqref{condCKE0}--\eqref{condCKE5} provides the desired characterization of conformal Killing fields of the form \eqref{polCKE} in purely underlying terms.
Now we analyze individual conditions in detail:

The condition \eqref{condCKE0} is just \eqref{wpsph-1}, i.e., $w^{AB}$ is a bivector.

Skew-symmetrization of \eqref{condCKE1} over $B$ and $R$ gives $(D_A w^{BR})_0=0$, which is just the condition~\eqref{wpsph1}.

Symmetrization of \eqref{condCKE1} over $B$ and $R$ gives $(\ps_A{}^{BR})_0=0$, which means that $\ps_A{}^{BC} =\de_A{}^{(B} \nu^{C)}$ for some $\nu^C$.
Using this notation and taking the trace of \eqref{condCKE1} give
\begin{align}
D_R w^{RA} = (n-1)\nu^A ,
\label{nu}
\end{align}
i.e., $\nu^C$ is as stated and we get the condition \eqref{wpsph5}.

Equation \eqref{condCKE2} dictates the trace-free part of $\ph_A{}^B$, while its trace is undetermined. 
It will be convenient to express it as 
\begin{align}
\ph_R{}^R = \tfrac{n-1}{n+1} D_R v^R +  n\mathring{\ps} ,
\label{ringpsi}
\end{align}
for some function $\mathring{\ps}\in\ce$. 
Thus, we have the equation \eqref{wpsph6}.

With \eqref{wpsph5}, the equation \eqref{condCKE3} is rewritten as 
\begin{align} 
\de_{(A}{}^{(R} D_{B)}\nu^{S)} - w^{T(R} W_{T(A}{}^{S)}{}_{B)}  +w^{T(R} \Rho_{T(A}\de^{S)}{}_{B)} =0. \label{condCKE3'}
\end{align}
In particular, the trace-free part of $w^{T(R} W_{T(A}{}^{S)}{}_{B)}$ vanishes.
Taking the full trace of \eqref{condCKE3'} and comparing with \eqref{wopProl2}, which is a consequence of \eqref{wpsph1}, gives  
\begin{align}
w^{TR} W_{TR}{}^B{}_A=0.
\label{traceW}
\end{align}
Thus, $w^{T(R} W_{T(A}{}^{S)}{}_{B)}=0$, which is just the integrability condition \eqref{wpsph0}.

With \eqref{wpsph5} and \eqref{nu}, the equation \eqref{condCKE4} is rewritten as 
\begin{align} 
\begin{split}
& D_{(A}^{} \ph_{B)}{}^R + D_{(A} (w^{SR}\Ph_{B)S})  - v^S W_{S(A}{}^R{}_{B)} -v^R \Rho_{AB} + \\
& \hskip6em + v^S \Rho_{S(A} \de^R{}_{B)} - w^{SR} D_{(A}\Ph_{B)S} + \tfrac12 w^{SR} D_{S}\Ph_{AB} -\nu^R \Ph_{AB} = 0, \label{condCKE4'} 
\end{split}
\end{align}
The trace-free part of \eqref{condCKE4'} is built from operators \eqref{BGGwt} and \eqref{B'op} as stated in \eqref{wpsph2}, whereas the trace of \eqref{condCKE4'} leads to \eqref{wpsph4}.
For the later claim, one has to use the following consequence of \eqref{wpsph6}, respectively \eqref{ringpsi},
\begin{equation*} \label{Dph'}
D_{(A} \ph_{R)}{}^R = \tfrac12 (n+1) D_A \mathring{\ps} - \tfrac12 (n-1) \Rho_{AR} v^R -\tfrac12 D_R ( w^{SR} \Ph_{SR}). 
\end{equation*}

With \eqref{nu} and \eqref{ringpsi}, the equation \eqref{condCKE5} is rewritten as 
\begin{align} 
& D_{(A} \al_{B)} + (D_{(A} v^R) \Ph_{B)R} + \tfrac12  v^S D_{S}\Ph_{AB} - \tfrac{1}{n+1} (D_S v^S) \Ph_{AB} -\tfrac12 \mathring\ps \Ph_{AB} =0. \label{condCKE5'} 
\end{align}
Referring to \eqref{Lop}, this is just the equation \eqref{wpsph3}.

Altogether, we have derived all the conditions \eqref{wpsph-1}--\eqref{wpsph6} in the statement from \eqref{condCKE0}--\eqref{condCKE5}.
Retracing, respectively adapting, the previous account, it is easy to proceed in the opposite direction.
Hence the two systems are equivalent.

To finish the proof, we have to show that any conformal Killing field of the modified PW metric has the form \eqref{polCKE}.
We shall mimic part of the computation from the proof of \cite[Proposition 6.5]{hsstz-walker}, taking into account the current modifications gathered in Lemma \ref{relateCKE}.
Let the scale be chosen so that the twistor spinor $\chi$ is parallel with respect to $\wt{D}$.
Differentiating \eqref{CKEol} and substituting \eqref{CKEprol1} and \eqref{CKEprol2} give
\begin{align*}
\wt{D}_a\wt{D}_b v_c = 
- 2\wt{g}_{a[b} {\be}_{c]} - 2\wt{\Rho}_{a[b}v_{c]} - \wt{W}_{bcar}v^r + \om'_{abc} 
+ \wt{\Rho}_{ar} v^r\wt{g}_{bc} - {\be}_a\wt{g}_{bc}
+ \wt{D}_a\om_{bc} .
\end{align*}
After vertical contractions we get
\begin{align}
\chi^{aA}\chi^{bB}\wt{D}_a\wt{D}_b v_c =
2\chi^{aA}\chi^{bB}\wt{g}_{c[a}\be_{b]}.
\label{chichiDDv}
\end{align}
In particular, another contraction gives
$\chi^{aA}\chi^{bB}\chi^{cC} \wt{D}_a\wt{D}_b v_c = \frac{\del^2}{\del p_A\del p_B} \ups^C = 0$, 
i.e., $\ups^C$ is a linear polynomial in $p_A$.
Further differentiation of \eqref{chichiDDv}, substitution of \eqref{CKEprol3} and the vertical contraction yield
\begin{align*}
\chi^{aA}\chi^{bB}\chi^{cC}\wt{D}_a\wt{D}_b\wt{D}_c v_d =0.
\end{align*}
In particular, we have
$\chi^{aA}\chi^{bB}\chi^{cC}\check{\eta}_D^d \wt{D}_a\wt{D}_b\wt{D}_c v_d = \frac{\del^3}{\del p_A\del p_B\del p_C} \be_D = 0$, 
i.e., $\be_D$ is a polynomial of second order in $p_A$.
\end{proof}

The vector field $v^a = \ups^A \check{\eta}^a_A + \be_A\chi^{aA}$ of the form \eqref{polCKE} can alternatively be written as $v^a = v^a_+ + v^a_0 + v^a_-$, where
\begin{align}
& v_+^a =  w^{AB} p_B \check{\eta}^a_A + \ps_A{}^{BC} p_B p_C \chi^{aA} \, , \label{lift+} \\
& v_0^a =v^A \check{\eta}_A^a  + \ph_A{}^B p_B \chi^{aA} \, , \label{lift0}  \\
& v_-^a = \al_A \chi^{aA} \, . \label{lift-} 
\end{align}
The key coefficients are $w^{AB}$, $v^A$, $\al_A$ and $\mathring{\ps}$, whereas the remaining two, $\ps_A{}^{BC}$ and $\ph_A{}^B$, are determined by the others via \eqref{wpsph5} and \eqref{wpsph6}.
Though the two prescriptions are not projectively invariant, formulas \eqref{lift0} and \eqref{lift+} provide well-defined lifts of underlying objects, i.e., the vector fields are indeed independent of the choice of affine connection in the projective class (the invariance of \eqref{lift-} is clear).
For the non-modified case, the independence is shown in \cite[Lemma 6.1]{hsstz-walker}, respectively \cite[Remark 6.10]{hsstz-walker}.
The modification $\Phi_{AB}$ enters only the component $v^a_0$, according to \eqref{wpsph6}, which clearly does not spoil the projective invariance.

We are now prepared to state the main theorem of this section.
\begin{thm} \label{summaryCKE}
There is a bijective correspondence between conformal Killing fields of the modified PW metric and quadruples 
\begin{align}
w^{AB} \in \ce^{[AB]}(-2), \quad v^A \in \ce^A, \quad \al_A \in \ce_{A}(2), \quad \mathring{\ps} \in\ce 
\label{blocks}
\end{align}
satisfying the conditions \eqref{wpsph0}--\eqref{wpsph4}.
Locally, the last condition is equivalent to 
\begin{align} \label{wpsph7}
w^{RS} \bigl( D_{[A} D_{|R} \Ph_{S|B]} + \Rho_{[A|R} \Ph_{S|B]} \bigr)=0.
\end{align}

More precisely, a conformal Killing field $v^a \in \wt\ce^a$ can be uniquely decomposed as
\begin{align}
v^a = v^a_+ + v^a_0 + v^a_- , 
\label{CKE+0-}
\end{align}
where $\mc{L}_k v^a_{+} = 2 v^a_{+}$, $\mc{L}_k v^a_{-} = -2 v^a_{-}$ and $\mc{L}_k v^a_0 = 0$.
The components can be expressed as the lifts \eqref{lift+}--\eqref{lift-}, where the essential coefficients are 
\begin{align}
\begin{split}
& w^{AB} = \chi^{aA} \chi^{B}_b \wt{D}_{a} v_{+}^b , \quad
v^A = \chi^{A}_b v_{0}^b , \\
& \al_A = \check{\eta}_{aA}  v_{-}^a , \quad
\mathring\ps = \tfrac{1}{n+1} \left( \tfrac1n \wt{D}_b v^b_{0} - \mu^a{}_b \wt{D}_a v^b_{0} \right) , 
\end{split}
\label{bits}
\end{align}
the remaining ones being given by \eqref{wpsph5} and \eqref{wpsph6}.
\end{thm}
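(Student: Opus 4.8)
The plan is to extract the bijection almost verbatim from Proposition~\ref{downCKE} and then supply the three features that go beyond it: the collapse of the full coefficient data to the quadruple \eqref{blocks}, the $\mc{L}_k$-grading together with the recovery formulas \eqref{bits}, and the local replacement of \eqref{wpsph4} by \eqref{wpsph7}.

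First I would invoke Proposition~\ref{downCKE}, which already sets up a bijection between conformal Killing fields of $\ol{g}_{ab}$ and tuples $(w^{AB},v^A,\ps_A{}^{BC},\ph_A{}^B,\al_A,\mathring{\ps})$ subject to \eqref{wpsph-1}--\eqref{wpsph6}. Since \eqref{wpsph5} and \eqref{wpsph6} give $\ps_A{}^{BC}$ and $\ph_A{}^B$ algebraically in terms of $w^{AB}$, $v^A$ and $\mathring{\ps}$, while \eqref{wpsph-1} merely records the skew-symmetry already built into the data type $\ce^{[AB]}(-2)$, the free data reduce to the quadruple \eqref{blocks} and the surviving constraints are exactly \eqref{wpsph0}--\eqref{wpsph4}. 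Uniqueness of the quadruple attached to a given field follows from the uniqueness of the polynomial expansion \eqref{polCKE} and of the frame splitting \eqref{hor-vert}, both guaranteed by Proposition~\ref{downCKE}.

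Next I would establish the grading \eqref{CKE+0-}. Writing $v^a=v^a_++v^a_0+v^a_-$ as the lifts \eqref{lift+}--\eqref{lift-}, I would compute $\mc{L}_k$ on each summand using $k=2p_A\,\partial/\partial p_A$, the degree of homogeneity in $p_A$, and the fact that $\mu^a{}_b$ acts as $+1$ on the horizontal and $-1$ on the vertical distribution; this produces the eigenvalues $2$, $0$ and $-2$ exactly as for scales in Theorem~\ref{summaryaE} and as in \cite[Proposition~6.5]{hsstz-walker}. Distinctness of the eigenvalues forces the decomposition to be unique, and the coefficients in \eqref{bits} are then read off by contracting with $\chi$ and $\check\eta$ and applying the duality relations \eqref{xinu}: those for $w^{AB}$, $v^A$ and $\al_A$ are immediate, while the expression for $\mathring{\ps}$ results from combining the divergence of $v_0^a$ with its $\mu$-contraction to isolate the trace $\ph_R{}^R$ and then inverting \eqref{ringpsi}.

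The hard part will be the local equivalence of \eqref{wpsph4} with \eqref{wpsph7}. Since $\mathring{\ps}$ has projective weight zero, $D_A\mathring{\ps}$ is an exact, hence closed, one-form, so by the Poincar\'e lemma \eqref{wpsph4} is solvable for $\mathring{\ps}$ (locally, uniquely up to an additive constant) precisely when its right-hand side is closed, that is, when $D_{[A}\mc{F}(w,\Ph)^R{}_{B]R}=0$. I would expand $\mc{F}(w,\Ph)^R{}_{AR}$ from \eqref{B'op}---after using the skew-symmetry of $w^{RS}$ against the symmetric $\Ph_{RS}$ it simplifies to $w^{SR}D_R\Ph_{AS}+\nu^R\Ph_{AR}$---apply $D_{[B}\,\cdot\,{}_{A]}$, and substitute the prolongation relations \eqref{wopProl1} for $D_Aw^{BC}$ and \eqref{wopProl2} for $D_A\nu^B$, discarding the Weyl contribution by the trace identity \eqref{traceW}. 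The delicate bookkeeping is that the two first-order cross terms (the derivative of $w$ paired with the derivative of $\Ph$, and $\nu^R D_{[B}\Ph_{A]R}$) must cancel, while the Schouten term of \eqref{wpsph7} is produced from the $\Rho\,w$-part of \eqref{wopProl2}; once these are tracked, the surviving second-derivative and curvature terms reorganize precisely into $w^{RS}\bigl(D_{[A}D_{|R}\Ph_{S|B]}+\Rho_{[A|R}\Ph_{S|B]}\bigr)$, which is \eqref{wpsph7}. This identifies \eqref{wpsph7} as the integrability condition, completing the equivalence and the proof.
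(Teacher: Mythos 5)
Your proposal is correct and follows essentially the same route as the paper's proof: reduction to Proposition~\ref{downCKE} with \eqref{wpsph5}--\eqref{wpsph6} eliminating the dependent coefficients, the $p_A$-homogeneity argument for the $\mc{L}_k$-eigendecomposition and the contraction formulas \eqref{bits}, and the closedness (Poincar\'e lemma) argument combined with \eqref{wopProl1}, \eqref{wopProl2} and \eqref{traceW} to turn \eqref{wpsph4} into \eqref{wpsph7}. Your simplification of the trace $\mc{F}(w,\Ph)^R{}_{AR}$ agrees with the paper's formula \eqref{GRR}, so no gaps remain.
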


\begin{proof}
The first part of the statement is clear from Proposition \ref{downCKE}.

The condition \eqref{wpsph4} means that the 1-form 
\begin{align}
\mc{F}(w,\Ph)^R{}_{AR} = \nu^R \Ph_{AR} - w^{RS} D_R \Ph_{SA}
\label{GRR}
\end{align}
is exact. 
Locally, it is equivalent to its closedness.
Putting the differential of \eqref{GRR} equal to zero and applying \eqref{wopProl1}, \eqref{wopProl2}, \eqref{traceW} lead to \eqref{wpsph7}.

Given the decomposition \eqref{CKE+0-} along \eqref{lift+}--\eqref{lift-} of a vector field $v^a = \ups^A \check{\eta}^a_A + \be_A\chi^{aA}$ of the form \eqref{polCKE}, 
the properties $\mc{L}_k v^a_{+} = 2 v^a_{+}$, $\mc{L}_k v^a_{-} = -2 v^a_{-}$ and $\mc{L}_k v^a_0 = 0$ follow from  \cite[Lemma~6.4]{hsstz-walker}.
(Note that just the degree of homogeneity with respect to $p_A$ plays a role here.)

The expressions of $v^A$, $\al_A$ and $w^{AB}$ in \eqref{bits} are clear, cf.\ the omnipresent relations \eqref{xinu} and $\chi^{aA}\wt{D}_a = \frac{\del}{\del p_A}$.
The expression for  $\mathring\ps$ can be verified by a direct computation using the form of $v_0^a$ from \eqref{lift0}. 
Specifically, one computes that
$$
\wt{D}_b v_0^b = \bigl( 1 + \tfrac{n-1}{n+1} \bigr) D_Rv^R + n \mathring\ps 
\quad \text{and} \quad \mu^a{}_b \wt{D}_a v^b_{0} = D_Rv^R - \ph_R{}^R .
$$
Hence the formula for $\mathring\ps$ follows using \eqref{wpsph6}.
\end{proof}

For $\Phi_{AB}=0$, the function $\mathring\ps$ has to be constant and we can single out the distinguished vector field $k^a$ from \eqref{lift0}.
This---as well as each of the other components---is a conformal Killing field and we recover \cite[Theorem~3]{hsstz-walker}, respectively  \cite[Proposition 6.5]{hsstz-walker}.
In particular, each component is given solely by one of the sections from \eqref{blocks}.

For general modifications, none of the components of the decomposition above  has to be a conformal Killing field.
Also, considering the sources \eqref{blocks} individually, the corresponding lifts may, but need not, be conformal Killing fields.
On the one hand, for a projective Killing form $\al_A$, respectively an infinitesimal projective symmetry $v^A$ satisfying $\mc{L}_v\Phi =0$, the lift has the form $v^a=v^a_{-}$, respectively $v^a=v^a_{0}$, and is an infinitesimal conformal symmetry, 
cf.\ conditions \eqref{wpsph2} and \eqref{wpsph3}.
On the other hand, a bivector $w^{AB}$ such that the right-hand  side of \eqref{wpsph2}, respectively \eqref{wpsph4}, does not vanish cannot lift to an infinitesimal symmetry without a counterbalancing influence of $v^A$, respectively $\mathring\ps$.

\section{Symmetries: special cases} \label{cKspecial}

The conditions from Proposition \ref{downCKE} significantly simplify in some special cases.
As in  section \ref{aEspecial}, we focus on the case of modified conformal extensions of projectively flat structures, the case when the modification term $\Phi_{AB}$ is in the image of the first BGG operator and the lowest dimensional case, respectively.
In all these cases, we attempt to specify the dimension of the algebra of infinitesimal symmetries.
Contrary to section \ref{aEspecial}, the details are getting more technical, which is why we combine several specifications in section \ref{Symm2}.
In any case, we are still able to obtain interesting partial results.
In this context, we also discuss an example with submaximal algebra of conformal Killing fields in section \ref{submax}.

\subsection{Projectively flat case} \label{sym-flat}
As in section \ref{Einstein-flat}, we explore some consequences of  projective flatness.
The simplifications are obtained by applying second BGG operators to particular conditions in Proposition \ref{downCKE} and local exactness of BGG complexes.
However, we have to manage more involved conditions than in the case of almost Einstein scales, which leads us to impose an additional genericity assumption.

\begin{thm} \label{cKprojflat}
Consider a modified PW metric associated to a projectively flat affine connection $D_A$ and a modification $\Ph_{AB}$ such that $\B_2(\Ph)$ is generic, where $\B_2: \ce_{(AB)}(2) \to  \ce_{[AB][CD]}(2)$  is the second BGG operator \eqref{BGGce_A2}.
Locally, there is a bijective correspondence between its conformal Killing fields and triples $v^A \in \ce^A$, $\al_A\in\ce_A(2)$ and $\mathring{\ps} \in \mathbb{R}$ satisfying 
\begin{align}
& \bigl( D_{A} D_{B} v^C + \Rho_{AB} v^C \bigr)_0 =0 , \label{flat-v} \\
& \mathcal{L}_v (\B_2(\Ph)) = \mathring{\ps} \B_2(\Ph) , \label{flat-lie} \\
& D_{(A}\al_{B)}=0 . \label{flat-al}
\end{align}
The dimension of the space of conformal Killing fields equals to $d+\frac12 n(n+1)$, where $d$ is the dimension of the space of solutions to \eqref{flat-v} and \eqref{flat-lie} and $n$ is the dimension of the underlying projective manifold.
\end{thm}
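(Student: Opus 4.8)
The plan is to obtain the statement from the general characterization of Proposition~\ref{downCKE} by imposing the two flatness hypotheses and then exploiting the genericity of $\B_2(\Ph)$ to kill the bivector block $w^{AB}$. First I would specialize the system \eqref{wpsph-1}--\eqref{wpsph6} to a projectively flat $D_A$, where $W_{AB}{}^C{}_D=0$ and the Cotton tensor vanishes. Then the integrability condition \eqref{wpsph0} is automatic, \eqref{wpsph1} is the homogeneous first BGG equation for $w^{AB}$, and \eqref{wpsph2} loses its curvature term, reading $(D_AD_Bv^C+\Rho_{AB}v^C)_0=-(\mc{F}(w,\Ph)^C{}_{AB})_0$; conditions \eqref{wpsph3} and \eqref{wpsph4} are unchanged. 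The task is to show $w^{AB}=0$, which collapses the quadruple of Proposition~\ref{downCKE} to the triple $v^A,\al_A,\mathring\ps$.

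Proving $w^{AB}=0$ is the crux and the main obstacle. Following the pattern of Corollary~\ref{E2BGG}, I would apply the second BGG operator of the projective-symmetry sequence (whose first operator is \eqref{BGGwt}) to \eqref{wpsph2}. In the flat case the BGG sequence is a complex, so the two successive operators compose to zero and the left-hand side is annihilated; hence the second operator must also annihilate the source $(\mc{F}(w,\Ph))_0$. A computation parallel to that of Corollary~\ref{E2BGG} --- now carrying the extra index of the bivector and discarding all terms weighted by the (vanishing) Weyl and Cotton tensors --- should reduce this to a relation of the form $w^{RS}\,\B_2(\Ph)_{ABCR}=0$. Reading this for each fixed $S$, the vector $R\mapsto w^{RS}$ lies in the kernel of the bundle map $\ce^A\to\ce_{[AB]C}$ defined by $\B_2(\Ph)$, so genericity (injectivity) forces $w^{AB}=0$. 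The care here is to match the precise index pattern to the injectivity statement; the closedness identity \eqref{wpsph7}, itself a contraction of $w$ with $\B_2(\Ph)$, serves as an independent consistency check.

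Once $w^{AB}=0$ the conditions decouple. Then $\nu^C=0$ and $\ps_A{}^{BC}=0$, the source $\mc{F}(w,\Ph)$ disappears, and \eqref{wpsph2} becomes exactly \eqref{flat-v}, so $v^A$ is an infinitesimal projective symmetry; similarly \eqref{wpsph4} gives $D_A\mathring\ps=0$, i.e.\ $\mathring\ps\in\mathbb{R}$. It remains to treat \eqref{wpsph3}, which now reads $D_{(A}\al_{B)}=-\tfrac12(\mathcal{L}_v\Ph-\mathring\ps\,\Ph)_{AB}$, an inhomogeneous instance of the projective Killing-form operator \eqref{BGGce_A2}. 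By local exactness of the flat BGG sequence this is solvable for $\al_A$ if and only if the second BGG operator annihilates the right-hand side; since $v^A$ is a projective symmetry and $\B_2$ is projectively invariant, $\B_2$ commutes with $\mathcal{L}_v$, and this integrability condition becomes precisely $\mathcal{L}_v(\B_2(\Ph))=\mathring\ps\,\B_2(\Ph)$, i.e.\ \eqref{flat-lie}. Any two solutions $\al_A$ differ by a projective Killing form, a solution of \eqref{flat-al}.

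Finally I would assemble the bijection and count dimensions. A conformal Killing field yields, via Proposition~\ref{downCKE} and $w=0$, data $(v^A,\al_A,\mathring\ps)$ that is uniquely pinned down by the $\mathcal{L}_k$-eigenvalue grading of Theorem~\ref{summaryCKE} (the $+2$-component now vanishes); conversely a triple satisfying \eqref{flat-v}--\eqref{flat-al} reconstructs a conformal Killing field through the lifts \eqref{lift0}--\eqref{lift-}. Since \eqref{flat-v} and \eqref{flat-lie} are linear in $(v^A,\mathring\ps)$, a particular solution of \eqref{wpsph3} can be chosen linearly in $(v^A,\mathring\ps)$, and the homogeneous freedom in $\al_A$ is exactly the solution space of \eqref{flat-al}, so the correspondence is a linear isomorphism onto the direct sum of these two solution spaces. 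Hence the dimension equals $d+\tfrac12 n(n+1)$, with the second summand the known dimension of projective Killing forms in the flat case.
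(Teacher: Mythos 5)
Your proposal follows essentially the same route as the paper: apply the second BGG operator to \eqref{wpsph2}, use the resulting contraction of $w^{AB}$ with $\B_2(\Ph)$ together with genericity to force $w^{AB}=0$, and then solve \eqref{wpsph3} by local exactness of the flat BGG sequence, with \eqref{flat-lie} arising as the integrability condition and the freedom in $\al_A$ being the Killing forms. One small correction of emphasis: the BGG composition applied to \eqref{wpsph2} only yields the trace-free part $\bigl(w^{CR}\,\B_2(\Ph)_{ABRD}\bigr)_0=0$, so the identity \eqref{wpsph7} --- which in the flat case reads $w^{RS}\B_2(\Ph)_{ARSD}=0$ --- is not merely ``an independent consistency check'' but the input that supplies the missing trace part, and only the combination of the two gives the untraced relation $w^{CR}\,\B_2(\Ph)_{ABRD}=0$ to which injectivity applies.
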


The genericity of $\B_2(\Ph)$ means that this field, interpreted as a bundle map $\ce^A \to \ce_{[AB]C}$, is injective.
Recall the condition \eqref{flat-v} indicates that $v^A$ is an infinitesimal projective symmetry.

\begin{proof}
The key conditions to control are \eqref{wpsph1}--\eqref{wpsph4}.
Applying the second BGG operator \eqref{BGGce^A} to \eqref{wpsph2} yields 
$0=\bigl(  w^{CR}\, \B_2(\Ph)_{ABRD} \bigr)_0$,
where  $\B_2$ is as in \eqref{BGGce_A2}.
Here one has to take into account that $w^{AB}$ satisfies \eqref{wpsph1} or, equivalently, \eqref{wopProl1}--\eqref{wopProl2}.
In the flat case, the condition \eqref{wpsph7}, which is a consequence of the key ones, means 
$w^{RS} \B_2(\Ph)_{ARSD} =0$.
Altogether, we have
\begin{align*}
w^{CR}\, \B_2(\Ph)_{ABRD}=0 .
\end{align*}
For generic $\B_2(\Ph)$, this implies $w^{AB}=0$, hence the conditions \eqref{wpsph2} and \eqref{wpsph4} reduces to 
\begin{align*}
\bigl( D_{A} D_{B} v^C + \Rho_{AB} v^C \bigr)_0 =0 , \quad
D_A \mathring{\ps} = 0 .
\end{align*}

Applying the second BGG operator \eqref{BGGce_A2} to \eqref{wpsph3} yields 
\begin{align*}
0 = -\mathcal{L}_v (\B_2(\Ph)) + \mathring{\ps} \B_2(\Ph) ,
\end{align*}
where we have used that $\B_2$ commutes with $\mc{L}_v$ and $\mathring{\ps}$ is constant.
Locally, by the exactness of the BGG sequence, the right-hand side of \eqref{wpsph3} is the image of the first BGG operator on a section of $\ce_A(2)$. 
This guarantees the existence of $\al_A\in\ce_A(2)$ satisfying \eqref{wpsph3} and all such sections are parametrized by solutions to the equation \eqref{flat-al}.
However, in the flat case, the solution space has dimension $\frac12n(n+1)$, see section \ref{A2}.
\end{proof}

\subsection{Special modification}
Here we assume that $\Ph_{AB} = D_{(A}\ph_{B)}$, for some $\ph_A \in \ce_A(2)$, i.e.\ $\Ph_{AB}$ is in the image of the first BGG operator \eqref{BGGce_A2}. 
As discussed at the beginning of section \ref{Einstein-spec}, this case reduces to the standard conformal extension whose infinitesimal symmetries are characterized in \cite[Theorem 3]{hsstz-walker}.
Again, we derive the characterization directly from Proposition \ref{downCKE}, observe the effect of $\Phi_{AB}$ and note that the space of conformal Killing fields of a modified conformal extension of the current type is an affine space over the vector space in the non-modified situation.

\begin{thm} \label{cKimage}
Let the standard PW metric be modified by the term of the form  $\Ph_{AB} = D_{(A} \ph_{B)}$, for some $\ph_A \in \ce_A(2)$.
There is a bijective correspondence between its conformal Killing fields and triples $w^{AB} \in \ce^{[AB]}(-2)$, $v^A\in\ce^A$ and $\al_A \in \ce_A(2)$ satisfying 
\begin{align}
& \big(D_C w^{AB}\big)_0=0, \quad w^{R(C} W_{R(A}{}^{D)}{}_{B)} =0 , \label{spec-w} \\
& \bigl( D_{(A} D_{B)} v^C + \Rho_{AB} v^C + v^S W_{S(A}{}^C{}_{B)} \bigr)_0 =0 , \label{spec-v} \\
& D_{(A} \al_{B)} =0 . \label{spec-al}
\end{align}
The dimension of the space of conformal Killing fields equals to $d_1+d_2+d_3+1$, where $d_1$, $d_2$ and $d_3$ is the dimension of the space of solutions to \eqref{spec-w}, \eqref{spec-v} and \eqref{spec-al}, respectively.
\end{thm}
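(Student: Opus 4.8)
The plan is to specialize Proposition \ref{downCKE} to the case $\Ph_{AB}=D_{(A}\ph_{B)}$, exactly as in the proof of Theorem \ref{aEimage} but now for the full symmetry system rather than the Einstein system. The guiding principle is the affine-space structure advertised in the text: the modification $\Ph$ only enters the inhomogeneous right-hand sides of \eqref{wpsph2}, \eqref{wpsph3} and \eqref{wpsph4}, and when $\Ph$ is a first-BGG image these right-hand sides should themselves be first-BGG images of explicitly constructed particular solutions built algebraically from $w^{AB}$, $v^A$ and $\ph_A$. Concretely, I expect the bilinear operator $\mc{F}(w,\Ph)^C{}_{AB}$ from \eqref{B'op}, when $\Ph_{AB}=D_{(A}\ph_{B)}$, to factor as the trace-free part of a second-order operator applied to $w^{RC}\ph_R$ (the analogue of the identity $\mc{F}(\xi,\Ph)_{AB}=\tfrac12(D_AD_B+\Rho_{AB})(\xi^R\ph_R)$ used in Theorem \ref{aEimage}); similarly $\tfrac12(\mathcal{L}_v\Ph)_{AB}$ should be a first-BGG image $D_{(A}(\cdot)_{B)}$ of a density-valued one-form built from $v$ and $\ph$, and $\mc{F}(w,\Ph)^R{}_{AR}$ should be an exact one-form so that \eqref{wpsph4} holds with $\mathring\ps$ a genuine function.

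The steps, in order, are as follows. First I would record the three simplifying identities just described, verifying that each inhomogeneous term in \eqref{wpsph2}, \eqref{wpsph3}, \eqref{wpsph4} becomes the relevant BGG image of an explicit particular solution assembled from $w^{AB}$, $v^A$, $\ph_A$. Second, using these identities I would absorb the particular solutions by a shift: replacing $\al_A$, $v^A$, $\mathring\ps$ by $\al_A$ minus its particular part (and likewise for the others) converts the inhomogeneous conditions into the homogeneous BGG equations \eqref{spec-w}, \eqref{spec-v}, \eqref{spec-al}. This change of variables is affine and bijective, which is precisely the asserted affine-space structure over the non-modified case. Third, I would note that \eqref{spec-w} is unchanged from Proposition \ref{downCKE} (conditions \eqref{wpsph1} and \eqref{wpsph0}), since $w^{AB}$ couples to $\Ph$ only through the now-absorbed inhomogeneities, and that \eqref{wpsph4} holds automatically with $\mathring\ps$ determined up to an additive constant, which is the source of the final summand $+1$ in the dimension count.

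Finally, for the dimension statement I would argue that after the shift the four building blocks $w^{AB}$, $v^A$, $\al_A$, $\mathring\ps$ decouple: $w^{AB}$ ranges over the solution space of \eqref{spec-w} (dimension $d_1$), $v^A$ over that of \eqref{spec-v} (dimension $d_2$), $\al_A$ over that of \eqref{spec-al} (dimension $d_3$), and $\mathring\ps$ over the constants (dimension $1$), giving $d_1+d_2+d_3+1$ via the bijection of Theorem \ref{summaryCKE}. The main obstacle I anticipate is the first step for the $\mc{F}(w,\Ph)$ term: unlike the Einstein case, here $w^{AB}$ is a bivector rather than a vector, so one must check carefully that the conjectured factorization of $\mc{F}(w,D_{(\cdot}\ph_{\cdot)})^C{}_{AB}$ through a single potential $w^{RC}\ph_R$ actually holds after imposing \eqref{spec-w} and using the prolongation \eqref{wopProl1}--\eqref{wopProl2}; the Weyl-curvature terms must cancel in a way that relies on the integrability condition \eqref{wpsph0}, and verifying this cancellation — together with confirming exactness of the one-form in \eqref{wpsph4} — is where the genuine computation lies.
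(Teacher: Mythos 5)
Your plan coincides with the paper's proof in both strategy and detail: the paper likewise computes that for $\Ph_{AB}=D_{(A}\ph_{B)}$ the trace-free part of $\mc{F}(w,\Ph)^C{}_{AB}$ is the (Weyl-modified) first BGG operator applied to $-2v^A$ with $v^A=-\tfrac12 w^{RA}\ph_R$, that $\mc{F}(w,\Ph)^R{}_{AR}$ is an exact one-form giving a particular $\mathring\ps$, and that the right-hand side of \eqref{wpsph3} is a first-BGG image giving a particular $\al_A$, after which the triangular affine shift yields the homogeneous equations \eqref{spec-w}--\eqref{spec-al} and the count $d_1+d_2+d_3+1$. You also correctly locate the genuine computational content (the factorization of $\mc{F}$ through the potential $w^{RC}\ph_R$ using \eqref{wopProl1}--\eqref{wopProl2} and the cancellation in the $\al$-equation, which the paper handles via the auxiliary potential $\ga_B$), so the proposal is essentially the paper's argument.
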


\begin{proof}
A direct computation shows that the bilinear operator from \eqref{B'op} has the form
\begin{align*}
\mc{F}(w,\Ph)^C{}_{AB} =
\tfrac12 w^{RC}\bigl( D_{(A} D_{B)} \ph_R+ \Rho_{AB} \ph_R  - \Rho_{R(A} \ph_{B)} + W_{R(A}{}^S{}_{B)} \ph_S \bigr) + \nu^C D_{(A} \ph_{B)} ,
\end{align*}
where $\nu^A = \tfrac{1}{n-1} D_Rw^{RA}$ as above.
Its trace-free and trace part, which appear on the right-hand side of \eqref{wpsph2} and \eqref{wpsph4}, respectively, are
\begin{align*}
& \bigl( \mc{F}(w,\Ph)^C{}_{AB} \bigr)_0 = 
\tfrac12 \bigl( D_{(A} D_{B)} (w^{RC}\ph_R) + \Rho_{AB} (w^{RC}\ph_R) + (w^{RS}\ph_R) W_{S(A}{}^C{}_{B)} \bigr)_0, \\
& \mc{F}(w,\Ph)^R{}_{AR} = 2 D_A \bigl( w^{RS} D_R\ph_S - 2\nu^R\ph_R  \bigr).
\end{align*}
Thus, $v^A = -\tfrac12 w^{RA}\ph_R$ is a particular solution to \eqref{wpsph2} and all such solutions are parametrized by solutions to the equation \eqref{spec-v}.
Similarly, $\mathring{\ps} = \tfrac{4}{n+1} ( w^{RS}D_R\ph_S - 2\nu^R\ph_R)$ is a particular solution to \eqref{wpsph4} and all such solutions differ by an additive constant.

Using the previous displays, the right-hand side of \eqref{wpsph3} can be arranged as
\begin{align*}
-\tfrac12 D_{(A} (\mathcal{L}_v\ph)_{B)} - \tfrac12 \bigl( \mc{F}(w,\Ph)^R{}_{AB} \bigr) \ph_R
+\tfrac{1}{2} \ph_{(A} D_{B)} \mathring{\ps} + \tfrac12 \mathring{\ps} D_{(A}\ph_{B)}.
\end{align*}
The problematic term $\bigl( \mc{F}(w,\Ph)^R{}_{AB} \bigr) \ph_R$ actually equals to $D_{(A} \ga_{B)}$,
where $\ga_B =  \tfrac12 (w^{RS} D_{B} \ph_R + \nu^S \ph_{B} ) \ph_S$.
Thus, 
$\al_B = -\tfrac12 (\mathcal{L}_v\ph)_{B} - \tfrac12 \ga_B + \tfrac12 \mathring{\ps} \ph_B$ 
is a particular solution to \eqref{wpsph3} and all such solutions are parametrized by solutions to the equation \eqref{spec-al}.
Equations in \eqref{spec-w} are just the remaining conditions from Proposition \ref{downCKE}.
\end{proof}

\subsection{Dimension four} \label{Symm2}
Analogously to section \ref{Einstein-2}, we inspect specific features of the lowest dimensional case.
In particular, the projective flatness is controlled by the vanishing of Cotton tensor $Y_{CAB}$.
Indices in computations below are lowered and raised  via the projective volume form $\bep_{AB} \in \ce_{[AB]}(3)$ and its inverse $\bep^{AB} \in \ce^{[AB]}(-3)$, respectively.

As the first result we show that, for non-flat extensions, one of the four building blocks necessarily vanishes:

\begin{prop} \label{summaryCKEdim2}
For a non-flat modified conformal extension of a 2-dimensional projective structure, there is a bijective correspondence between its conformal Killing fields and triples $v^A \in \ce^A$, $\al_A \in \ce_{A}(2)$ and $\mathring{\ps} \in \mathbb{R}$ satisfying
\begin{align}
& \bigl( D_{A} D_{B} v^C + \Rho_{AB} v^C \bigr)_0 =0 , \label{dim2-v-proj}\\
& D_{(A}^{} \al_{B)} = -\tfrac12 (\mathcal{L}_v \Ph)_{AB} + \tfrac12 \mathring{\ps} \, \Ph_{AB} . \label{dim2-al}
\end{align}
\end{prop}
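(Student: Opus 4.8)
The plan is to specialize Proposition~\ref{downCKE} (equivalently Theorem~\ref{summaryCKE}) to $n=2$ and to show that, for a non-flat extension, the top building block $w^{AB}$ must vanish; the asserted bijection then collapses to the triple $(v^A,\al_A,\mathring\ps)$. First I would record the dimensional simplifications: since the projective Weyl tensor $W_{AB}{}^C{}_D$ vanishes identically in dimension two, the integrability condition \eqref{wpsph0} is automatic, and every bivector can be written as $w^{AB}=w\,\bep^{AB}$ for a density $w\in\ce(1)$ via the projective volume form $\bep^{AB}\in\ce^{[AB]}(-3)$. Granting $w=0$, the operator $\mc{F}(w,\Ph)$ vanishes, so \eqref{wpsph2} reduces to \eqref{dim2-v-proj}, \eqref{wpsph4} forces $\mathring\ps$ to be locally constant, and \eqref{wpsph3} is exactly \eqref{dim2-al}; conversely any triple satisfying \eqref{dim2-v-proj}--\eqref{dim2-al} yields, via the lifts \eqref{lift0}--\eqref{lift-} with the remaining coefficients fixed by \eqref{wpsph5}--\eqref{wpsph6}, a conformal Killing field. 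Thus everything hinges on proving $w=0$, and I would split this according to projective flatness.

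For a projectively non-flat structure (Cotton tensor $Y_{CAB}\neq0$), the key observation is that, although the first BGG operator \eqref{wpsph1} on $\ce^{[AB]}(-2)$ has trivial target in dimension two and hence imposes nothing, the prolongation relations \eqref{wopProl1}--\eqref{wopProl2} forced by the quadratic-in-$p$ part \eqref{condCKE3'} of the Killing equation do impose a genuine condition. Under the identification $\ce^{[AB]}(-2)\cong\ce(1)$ induced by $\bep^{AB}$, the prolonged system (with the $W$-term absent since $W=0$) becomes the single projective BGG equation $(D_AD_B+\Rho_{AB})w=0$ for the weight-one density $w$. Applying the second BGG operator from \eqref{BGGce1} and using $W=0$, as in Corollary~\ref{E2BGG} and \eqref{taxi3}, its integrability condition is $Y_{CAB}\,w=0$. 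Since $w$ vanishes on the nonempty open set where $Y\neq0$ and solves an overdetermined equation, its vanishing propagates through the parallel prolongation tractor, giving $w\equiv0$.

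For a projectively flat structure (so $Y=0$), non-flatness of the extension forces $\B_2(\Ph)\neq0$ by Theorem~\ref{relateFlat}; in dimension two, where $\B_2(\Ph)$ is a section of the density bundle $\ce_{[AB][CD]}(2)\cong\ce(-4)$ (cf.\ section~\ref{Einstein-2}), nonvanishing coincides with the genericity hypothesis of Theorem~\ref{cKprojflat}. That theorem then applies and yields $w=0$ together with a bijection onto triples $(v^A,\al_A,\mathring\ps)$ satisfying \eqref{flat-v}--\eqref{flat-al}; by local exactness of the BGG sequence these repackage into \eqref{dim2-v-proj} and \eqref{dim2-al}, the single equation \eqref{dim2-al} splitting, upon applying the second BGG operator, into the solvability condition \eqref{flat-lie} and the homogeneous equation \eqref{flat-al}. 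Alternatively one can argue uniformly: the condition \eqref{wpsph7}, required for the existence of $\mathring\ps$, reduces in dimension two to $w\,{\star}\B_2(\Ph)=0$, so non-flatness kills $w$ in either branch.

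I expect the main obstacle to be the projectively non-flat branch, specifically the clean justification that the conditions of Proposition~\ref{downCKE} force $w$ to satisfy $(D_AD_B+\Rho_{AB})w=0$. The delicate point is that the naive first BGG description degenerates in dimension two: the target of \eqref{wpsph1} collapses, so the operative constraint has migrated into the second prolongation \eqref{wopProl2}, whose stated coefficient $\tfrac{1}{2(n-2)}$ is singular at $n=2$ and must be handled by invoking $W=0$ (equivalently, by reading the constraint directly off \eqref{condCKE3'}). Once this identification with the weight-one projective BGG equation is secured, the Cotton-tensor integrability and the conclusion $w\equiv0$ follow routinely.
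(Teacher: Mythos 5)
Your proposal is correct and follows essentially the same route as the paper: everything reduces to showing $w^{AB}=0$, which you do by identifying $w$ with a weight-one density subject to the second-order first BGG equation of section \ref{A1}, invoking the Cotton-tensor integrability condition \eqref{compBGGce1} in the projectively non-flat case and Theorem \ref{relateFlat} together with $\B_2(\Ph)\neq 0$ (via \eqref{wpsph7}, resp.\ Theorem \ref{cKprojflat}) in the projectively flat case; the paper runs the same two ingredients as a single argument by contradiction rather than a case split. Your remark that in dimension two the operative constraint on $w$ must be read off \eqref{condCKE3'} --- since the target of \eqref{wpsph1} collapses and the prolongation coefficient $\tfrac{1}{2(n-2)}$ degenerates --- is a genuine subtlety that the paper glosses over, and you handle it correctly. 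One caveat: the ``uniform alternative'' at the end of your third paragraph does not work, because non-flatness of the extension does not imply $\star\B_2(\Ph)\neq 0$ (the underlying projective structure may be non-flat while $\B_2(\Ph)=0$, e.g.\ for $\Ph=0$), and the reduction of \eqref{wpsph7} to $w\,\star\B_2(\Ph)=0$ is itself only established under projective flatness; so in that branch the Cotton argument, which your main proof already supplies, remains indispensable.
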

Recall that the condition \eqref{dim2-v-proj} indicates that $v^A$ is an infinitesimal projective symmetry.

\begin{proof}
Sections $w^{AB}\in\ce^{[AB]}(-2)$ satisfying \eqref{wpsph1} are identified with densities $w^{AB}\bep_{AB} \in\ce(1)$ satisfying the first BGG equation corresponding to \eqref{BGGce1}.
Its non-trivial solutions has to satisfy the condition \eqref{compBGGce1}, which implies  projective flatness in dimension 2.
In such case, the condition \eqref{wpsph7} reads as $w^{RS} \B_2(\Ph)_{ARSD} =0$, which implies $\B_2(\Ph)=0$.
By Theorem \ref{relateFlat}, the conformal extension would be flat, which contradicts our assumption.

Thus, $w^{AB}$ has to vanish identically and the key conditions from Proposition \ref{downCKE} simplify to those displayed above.
\end{proof}

As above, the composition of an appropriate relation with the corresponding BGG operator may provide  extra information.
Let us continue in the setting of the previous Proposition and apply the second BGG operator \eqref{BGGce_A2} to \eqref{dim2-al}.
This yields
\begin{align*}
\al_{[C} Y_{D]AB} = \mc{L}_v(\B_2(\Ph))_{ABCD} -\mathring{\ps}(\B_2(\Ph))_{ABCD} ,
\end{align*}
where we have used \eqref{compBGGce_A2}, the fact that $\B_2$ commutes with $\mc{L}_v$ and that $\mathring{\ps}$ is constant.
With the associated quantities  $(\star Y)^A = Y_{CDE} \bep^{AC}\bep^{DE} \in \ce^A(-6)$ and $\star \B_2(\Ph) = \B_2(\Ph)_{ABCD} \bep^{AB}\bep^{CD} \in \ce(-4)$, the previous display is rewritten as
\begin{equation} \label{SymmComp}
\al_R (\star Y)^R = \mc{L}_v(\star \B_2(\Ph)) -\mathring{\ps}(\star \B_2(\Ph)) .
\end{equation}
Off its zero, the density $\star \B_2(\Ph) \in \ce(-4)$ plays a role of projective scale, hence fixes a unique affine connection from the projective class.

Compared to the context of Theorem \ref{eAdim2}, there are currently more terms entering the game and a complete discussion seems to be rather complicated.
Restricting to the case of non-flat extensions of flat projective structures, we conclude with the following specification of Theorem \ref{cKprojflat}:

\begin{thm} \label{cKf-dim2}
For a non-flat modified conformal extension of a flat 2-dimensional projective structure, let $D_A$ be the affine connection corresponding to the scale $\star \B_2(\Ph)$ and $R_{AB}{}^C{}_{D}$ be its curvature tensor.
Locally, off the zero set of $\star \B_2(\Ph)$, there is a bijective correspondence between the conformal Killing fields and pairs $v^A \in \ce^A$ and $\al_A \in \ce_{A}(2)$ satisfying
\begin{align}
& D_A D_B v^C + v^S R_{SA}{}^C{}_{B} =0 , \label{dim2-v-af} \\
& D_{(A} \al_{B)} =0 . \label{dim2-al-kill}
\end{align}
The dimension of the space of conformal Killing fields equals to $d+3$, where $d$ is the dimension of the space of solutions to \eqref{dim2-v-af}.
\end{thm}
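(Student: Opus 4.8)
The plan is to build directly on Proposition~\ref{summaryCKEdim2}, which already settles the non-flat two-dimensional case by identifying conformal Killing fields with triples $(v^A,\al_A,\mathring{\ps})$ subject to \eqref{dim2-v-proj} and \eqref{dim2-al}; here $\mathring{\ps}$ is constant, since the vanishing of $w^{AB}$ forces $D_A\mathring{\ps}=0$. Assuming the projective structure flat, three things remain to be done: (i) pin down $\mathring{\ps}$ in terms of $v^A$, (ii) upgrade the projective Killing equation \eqref{dim2-v-proj} to the affine Killing equation \eqref{dim2-v-af} for the distinguished connection $D_A$, and (iii) trade the inhomogeneous equation \eqref{dim2-al} for the homogeneous Killing-form equation \eqref{dim2-al-kill}.

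For (i), I would exploit that flatness kills the Cotton tensor, so the companion identity \eqref{SymmComp} collapses to $\mc{L}_v(\star\B_2(\Ph))=\mathring{\ps}\,(\star\B_2(\Ph))$. Since $D_A$ is by definition the connection preserving the scale $\star\B_2(\Ph)\in\ce(-4)$, one has $D_A(\star\B_2(\Ph))=0$; evaluating the Lie derivative of a weighted density through this connection (the pure-density coefficient being $-\tfrac{w}{n+1}=\tfrac43$ for $w=-4$, $n=2$, cf.\ \eqref{Lop}) gives $\mc{L}_v(\star\B_2(\Ph))=\tfrac43(D_Rv^R)\,(\star\B_2(\Ph))$. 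Off the zero set of $\star\B_2(\Ph)$ this forces $\mathring{\ps}=\tfrac43 D_Rv^R$, so the constancy of $\mathring{\ps}$ is equivalent to the constancy of the divergence $D_Rv^R$.

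For (ii), I would use the standard fact that a special connection has $\mc{L}_v\Ga\ind{_A^C_B}=\de^C_{(A}\phi_{B)}$ with $\phi_A=\tfrac{2}{n+1}D_A(D_Rv^R)$, so $v^A$ is an infinitesimal projective symmetry iff \eqref{dim2-v-proj} holds and an infinitesimal affine symmetry iff additionally $\phi_A=0$, i.e.\ iff $D_Rv^R$ is constant. Concretely, in dimension two the Weyl tensor vanishes, so $v^SR_{SA}{}^C{}_B=v^C\Rho_{AB}-\de^C_A(v^S\Rho_{SB})$, whence \eqref{dim2-v-af} reads $D_AD_Bv^C+\Rho_{AB}v^C=\de^C_A(v^S\Rho_{SB})$; its trace-free part is precisely \eqref{dim2-v-proj}, and its trace part is exactly the condition $\phi_A=0$. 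Combined with (i), this makes \eqref{dim2-v-proj} equivalent to \eqref{dim2-v-af}. For (iii), given such a $v^A$ together with $\mathring{\ps}=\tfrac43 D_Rv^R$, the right-hand side $S_{AB}=-\tfrac12(\mc{L}_v\Ph)_{AB}+\tfrac12\mathring{\ps}\Ph_{AB}$ of \eqref{dim2-al} satisfies $\B_2(S)=-\tfrac12\bigl(\mc{L}_v(\B_2(\Ph))-\mathring{\ps}\B_2(\Ph)\bigr)=0$, because $\B_2$ commutes with $\mc{L}_v$ and (i) gives $\mc{L}_v(\B_2(\Ph))=\mathring{\ps}\B_2(\Ph)$ (in dimension two $\B_2(\Ph)$ is a section of the line bundle $\ce_{[AB][CD]}(2)\cong\ce(-4)$, so this is equivalent to the starred relation). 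By local exactness of the projective BGG complex \eqref{BGGce_A2} in the flat case, $S_{AB}$ lies in the image of the first operator $\al_A\mapsto D_{(A}\al_{B)}$, so \eqref{dim2-al} is solvable, and its solutions form a torsor over the space of projective Killing forms \eqref{dim2-al-kill}.

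Finally, for the bijection and the count I would examine the linear map sending a conformal Killing field to its horizontal part $v^A$. By (ii) a field in the source satisfies \eqref{dim2-v-af}, and conversely every solution $v^A$ of \eqref{dim2-v-af} has constant divergence, hence yields a constant $\mathring{\ps}$ via (i) and, by (iii), an admissible $\al_A$; thus the image is exactly the solution space of \eqref{dim2-v-af}, of dimension $d$. The kernel consists of the fields with $v^A=0$, forcing $\mathring{\ps}=0$ and reducing \eqref{dim2-al} to $D_{(A}\al_{B)}=0$, i.e.\ the projective Killing forms, whose dimension in the flat two-dimensional case is $\tfrac12 n(n+1)=3$ (cf.\ the proof of Theorem~\ref{cKprojflat}). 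Hence the space of conformal Killing fields has dimension $d+3$. I expect the main obstacle to be step (ii): carefully matching the curvature term $v^SR_{SA}{}^C{}_B$ against the Schouten expression produced by $W=0$, and verifying that the trace-free equation \eqref{dim2-v-proj} together with constant divergence is genuinely equivalent to the full affine equation \eqref{dim2-v-af}.
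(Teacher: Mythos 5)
Your argument is correct and follows essentially the same route as the paper: the key step in both is that $\star \B_2(\Ph)$ is parallel for $D_A$, so $\mc{L}_v(\star\B_2(\Ph))=\tfrac43(D_Rv^R)\,\star\B_2(\Ph)$, which forces $\mathring{\ps}=\tfrac43 D_Rv^R$ to be constant and upgrades the projective symmetry to an affine one (the content of Proposition~\ref{projIS-affIS}, which you re-derive more directly via the trace of $\mc{L}_v\Ga$). The only organizational difference is that you enter through Proposition~\ref{summaryCKEdim2} together with \eqref{SymmComp} and redo the BGG-exactness step producing $\al_A$, whereas the paper simply specializes Theorem~\ref{cKprojflat}, whose proof already contains that step.
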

Recall that the flatness of induced conformal structure is controlled by the vanishing of $\star\B_2(\Ph)$.
The condition \eqref{dim2-v-af} indicates that $v^A$ is an infinitesimal affine symmetry of $D_A$, see appendix~\ref{app-B}.

\begin{proof}
The scale $\star\B_2(\Ph)$ is parallel with respect to $D_A$, hence
\begin{align*}
\mc{L}_v(\star\B_2(\Ph)) = \tfrac43(D_R v^R)(\star\B_2(\Ph)) ,
\end{align*}
where the (otherwise unimportant) coefficient on the right-hand side reflects the conventions from \eqref{proj-w}.
With this observation we easily compare the actual conditions with those of Theorem \ref{cKprojflat}:

Let $v^A \in \ce^A$ be an infinitesimal projective symmetry satisfying \eqref{flat-lie} for some $\mathring{\ps}\in\mbb{R}$.
With the current reformulations, that condition means $D_R v^R =\frac34 \mathring{\ps}$. 
In particular, $D_R v^R$ is constant.
Hence, by Proposition \ref{projIS-affIS},  $v^A$ is an infinitesimal affine symmetry of $D_A$.
Conversely, 
let $v^A \in \ce^A$ be an infinitesimal affine symmetry of $D_A$.
Then $v^A$ is an infinitesimal projective symmetry and, by Proposition \ref{projIS-affIS}, $D_R v^R$ is constant.
Hence, for $\mathring{\ps}=\frac43 D_R v^R$, the condition  \eqref{flat-lie} is satisfied.
The rest is clear.
\end{proof}

By the local exactness of BGG sequences in the flat case, every projective scale is locally of the form $\star \B_2(\Ph)$, for some $\Ph$. 
It then follows from Theorem \ref{cKf-dim2} and Proposition \ref{aff-symm-2} that the corresponding modified conformal extension of a flat projective structure has the dimension of the symmetry algebra limited as follows:

\begin{thm} \label{cKdim2}
For non-flat modified conformal extensions of flat 2-dimensional projective structures, 
the Lie algebra of infinitesimal conformal symmetries can be of any dimension from 3 to 9, except for 8. 
\end{thm}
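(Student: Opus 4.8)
The plan is to combine Theorem \ref{cKf-dim2} with the classification of affine symmetry algebras of projectively flat connections on surfaces, so that the statement reduces to an arithmetic count. First I would recall that, by Theorem \ref{cKf-dim2}, working locally and off the zero set of $\star\B_2(\Ph)$, the conformal Killing fields of the extension are in bijection with pairs $(v^A,\al_A)$ where $v^A$ solves the affine symmetry equation \eqref{dim2-v-af} and $\al_A$ solves the projective Killing equation \eqref{dim2-al-kill}, the two solution spaces contributing independently to the total dimension. Since the underlying projective structure is flat and $n=2$, the space of solutions of \eqref{dim2-al-kill} has dimension $\tfrac12 n(n+1)=3$, cf.\ section \ref{A2}. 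Hence the conformal symmetry dimension is $d+3$, where $d$ is the dimension of the space of infinitesimal affine symmetries of the connection $D_A$ singled out by the scale $\star\B_2(\Ph)$.

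Next I would argue that $D_A$ ranges over \emph{all} projectively flat special connections on the surface. By the local exactness of the BGG sequence in the flat case, every projective scale is locally realized as $\star\B_2(\Ph)$ for some $\Ph_{AB}\in\ce_{(AB)}(2)$; any such $\Ph$ has $\B_2(\Ph)\ne0$, so by Theorem \ref{relateFlat} the associated modified conformal extension is genuinely non-flat. As the scale runs over all nowhere-vanishing densities, the corresponding special connection $D_A$ runs over all special connections in the (flat) projective class. Consequently the set of admissible conformal symmetry dimensions is exactly $\{\,d+3\,\}$ as $d$ ranges over the dimensions attainable by the affine symmetry algebra of a projectively flat special connection on a surface.

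Finally I would invoke Proposition \ref{aff-symm-2}, which is precisely the classification of these dimensions: the affine symmetry algebra of a projectively flat special connection on a surface has dimension $d\in\{0,1,2,3,4,6\}$, the value $5$ being excluded by the affine symmetry gap between the maximal case (flat, $d=n(n+1)=6$) and the submaximal case ($d=4$). Substituting yields $d+3\in\{3,4,5,6,7,9\}$, that is, every dimension from $3$ to $9$ except $8$, as claimed. The whole substance of the argument rests on Proposition \ref{aff-symm-2}, and specifically on ruling out $d=5$: this affine gap is exactly what produces the missing value $8$. The remaining steps are only the bookkeeping furnished by Theorem \ref{cKf-dim2} together with the flatness dictionary of Theorem \ref{relateFlat}, so the hard part is entirely isolated in the appendix.
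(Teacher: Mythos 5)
Your argument is correct and follows essentially the same route as the paper: local exactness of the BGG sequence to realize every projective scale as $\star\B_2(\Ph)$, the count $d+3$ from Theorem \ref{cKf-dim2}, and Proposition \ref{aff-symm-2} to identify the attainable values $d\in\{0,1,2,3,4,6\}$, whence $d+3\in\{3,4,5,6,7,9\}$. The only substantive content beyond bookkeeping is, as you note, the exclusion of $d=5$ and the realizability of the other values by special projectively flat connections, which is exactly what Proposition \ref{aff-symm-2} supplies.
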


The flat 4-dimensional conformal structure, which is the standard conformal extension of flat projective structure, has maximal symmetry algebra, whose dimension is 15.
It turns out, that the submaximal dimension of the symmetry algebra of 4-dimensional conformal structures of split signature is 9, see \cite[section 5.1]{Kruglikov2017}. 
This case is discussed in more detail below.

\subsection{Submaximal example} \label{submax}

In this section, we illustrate some of the previously obtained general results in a very concrete setting.
For this purpose, we take a non-flat modified conformal extension with non-trivial algebra of infinitesimal symmetries and show how this (potentially complicated) structure can be assorted in simpler underlying projective terms.
Interestingly, examples of metrics of split signature whose algebra of conformal Killing fields has submaximal dimension are modified PW metrics, see \cite[section 5.1]{Kruglikov2017}.
The richest structure appears in dimension four, which is the case we discuss here in detail.

Let us take the metric
\begin{align}
\ol g = \d x^1\odot\d p_1 + \d x^2\odot\d p_2 + (x^2)^2 \d x^1\odot\d x^1 ,
\label{pp}
\end{align}
i.e. the modification with $\Ph=(x^2)^2 \d x^1\odot\d x^1$ of the flat PW metric.
According to Proposition \ref{relateW}, the Weyl curvature of $\ol{g}$ is 
\begin{align*}
\ol{W} =-2\B_2(\Ph) =-4 \d x^1\wedge\d x^2\odot\d x^1\wedge\d x^2 ,
\end{align*}
hence the conformal extension is non-flat.
According to Theorem \ref{cKf-dim2}, conformal Killing fields of $\ol{g}$ are parametrized by infinitesimal affine symmetries of the flat affine connection and projective Killing forms, which form  spaces of dimension 6 and 3, respectively.
Altogether, the space of conformal Killing fields has dimension 9.

To be more precise, we describe both the underlying sources and their corresponding lifts.
The infinitesimal affine symmetries and the projective Killing forms have the form, respectively,
\begin{align*}
& v = (c_1 +c_3 x^1 +c_5 x^2)\del_{x^1} +(c_2 +c_4 x^1 +c_6 x^2)\del_{x^2} , \\
& \al_k = (c_7+c_9 x^2)\d x^1 + (c_8-c_9 x^1)\d x^2 .
\end{align*}
where $c_1, \dots, c_9$ are arbitrary constants.
A particular solution to the key equation \eqref{wpsph3} is 
\begin{align*}
\al_p = \left(-c_2 x^1x^2 -\tfrac12c_4 (x^1)^2x^2 -\tfrac13c_5(x^2)^3\right)\d x^1 + \left(\tfrac12c_2(x^1)^2 +\tfrac16c_4(x^1)^3\right)\d x^2 ,
\end{align*}
for the constant $\mathring{\ps}=\frac43(c_3+c_6)$.
Thus, conformal Killing fields of $\ol{g}$ are uniquely given by the quadruples $w$, $v$, $\al$ and $\mathring{\ps}$, where  $w=0$ and $\al=\al_p+\al_k$.
According to Theorem \ref{summaryCKE}, they are described by the following lifts:
\begin{align*}
& \ol v_0 = \left(c_1 +c_3 x^1 +c_5 x^2\right)\del_{x^1} +\left(c_2 +c_4 x^1 +c_6 x^2\right)\del_{x^2} + \\
& \hskip6em + \left((c_3+2c_6) p_1 -c_4 p_2\right)\del_{p_1} + \left(-c_5 p_1 +(2c_3+c_6) p_2 \right)\del_{p_2} , \\
& \ol v_- = \left(c_7+c_9 x^2-c_2 x^1x^2 -\tfrac12c_4 (x^1)^2x^2 -\tfrac13c_5(x^2)^3\right)\del_{p_1} + \\
& \hskip10em + \left(c_8-c_9 x^1+\tfrac12c_2(x^1)^2 +\tfrac16c_4(x^1)^3\right)\del_{p_2} .
\end{align*}
All in all, conformal Killing fields of the modified PW metric \eqref{pp} have the form $\ol v=\ol v_0+\ol v_-$ with the summands as above.

Also, the structure of the Lie algebra of conformal Killing fields is given by the underlying data, typically in a rather intricate way.
To decipher the current example, let us denote the algebra of infinitesimal affine and conformal symmetries by $\g$ and $\ol\g$, respectively.
Let us further denote by $e_i$ and $\ol e_i$ the generator of $\g$ and $\ol\g$ corresponding to the coefficient $c_i$, where $i$ runs from 1 to 6 and 9, respectively.
Of course, $\g=\g_0\oplus\g_1$, where the reductive and the nilpotent subalgebra is, respectively, 
\begin{gather*}
\g_0 =  \langle e_4, e_3-e_6, e_5 \rangle \oplus \langle e_3+e_6 \rangle, \\
\g_1 =\langle e_1, e_2 \rangle.
\end{gather*}
The indicated decomposition of $\g_0$ exhibits its simple part, which is $\mf{sl}(2,\mbb{R})$, and the center.

It turns out that $\ol\g$ is a graded Lie algebra, $\ol\g=\ol\g_0\oplus\ol\g_1\oplus\ol\g_2\oplus\ol\g_3$, where
\begin{gather*}
\ol\g_0 =\langle \ol{e}_4, \ol{e}_3-\ol{e}_6, \ol{e}_5 \rangle \oplus \langle \ol{e}_3+\ol{e}_6 \rangle, \\
\ol\g_1 =\langle \ol{e}_1, \ol{e}_2 \rangle, \quad
\ol\g_2 =\langle \ol{e}_9 \rangle, \quad
\ol\g_3 =\langle \ol{e}_7, \ol{e}_8 \rangle .
\end{gather*}
The subalgebra $\ol\g_0$ is isomorphic to $\g_0$, the effect of the modification is visible in the nilpotent part of $\ol\g$.
Nonetheless, the action of $\ol\g_0$ on $\ol\g_1$, respectively on  $\ol\g_2\oplus\ol\g_3$, corresponds to the action of $\g_0$ on $\g_1$, respectively on projective Killing forms.
The resulting refined structure on the conformal side is a priori not obvious and emerges as a consequence of the very specific modification.
Moreover, note that $\ol\g$ is isomorphic to a parabolic subalgebra of the split real form of $14$-dimensional exceptional simple Lie algebra $\g_2$, namely, to the one corresponding to the short root.

\begin{remas}
The appearance of a parabolic subalgebra of $\g_2$ as the Lie algebra of conformal infinitesimal symmetries of the metric \eqref{pp} has the following geometric explanation.
There is a natural (twistor) distribution of rank 2 on the bundle of self-dual null-planes of a 4-dimensional split-signature conformal manifold.
Locally, this is a $(2,3,5)$ distribution if and only if the self-dual part of the Weyl tensor is non-trivial, see \cite{An2012}.
In this correspondence, infinitesimal symmetries of the conformal structure lift to  infinitesimal symmetries of the twistor distribution, and thus the symmetry algebra of the former structure is identified with a subalgebra of the symmetry algebra of the latter one.
It is a classical result by Cartan, 
that the maximal symmetry algebra of $(2,3,5)$ distributions is the exceptional simple Lie algebra $\g_2$ and the submaximal one has dimension 7.
Now, the self-dual part of the Weyl tensor of \eqref{pp} is non-trivial and its algebra of conformal symmetries has dimension 9. 
It follows that the Lie algebra of infinitesimal conformal symmetries of \eqref{pp}  has to be a subalgebra of $\g_2$.
Finally, note that there are just two 9-dimensional subalgebras in $\g_2$, namely, the two maximal parabolic ones.

The modified PW metric \eqref{pp} is also a pp-wave, symmetric and Einstein metric.
All Einstein metrics in the conformal class  of \eqref{pp} correspond to the scales of the form $\si=c_1x_1+c_2x_2+c_3$, for $c_1,c_2,c_3\in\mbb R$. 
They form a space of dimension 3 and provide a realization of one of the possibilities listed in Theorem \ref{eAdim2}.
\end{remas}

\section{Ambient metric and $Q$-curvature} \label{Ambient}

There is a natural geometric construction of the Fefferman--Graham ambient metric for modified conformal PW metrics.
Compared to the non-modified situation studied in \cite{hsstz-ambient}, the modification tensor causes no serious complication.
In this context, partial observations with more general modifications were done, which we comment at the end of section.

The general setting of the Fefferman--Graham ambient construction is as follows, see \cite{Fefferman2012}.
Let a conformal class on a smooth manifold $\wt{M}$ be represented by a metric $g$.
The ambient manifold $\mb{M}$ is locally described by two more coordinates, say $t\in\mbb{R}_+$, which parametrizes metrics in the conformal class, and $\rho\in\mbb{R}$, which represents the essentially new dimension.
The normal form for the ambient metric on $\mb{M}$ is
\begin{align} \label{ambient-normal}
\mb{g}=2\rho \d t\odot \d t + 2t \d t\odot \d\rho +t^2 g(\rho) ,
\end{align}
where $g(\rho)$ is a smooth 1-parameter family of metrics on $\wt{M}$ with $g(0)=g$ being a representative metric in the conformal class.
On of the key features of such metrics is that they are homogeneous of degree 2 with respect to $t$.
The \emph{Fefferman--Graham ambient metric} is a metric of the form \eqref{ambient-normal} that is Ricci-flat.
The existence of such metrics is a subtle question. 
For a given $g$, the metric \eqref{ambient-normal} is typically constructed iteratively by the Taylor expansion of $g(\rho)$ in $\rho$ so that the Ricci-flatness condition is controlled asymptotically for $\rho=0$.
In even dimension, the construction is obstructed in finite order by the so-called Fefferman–Graham tensor, a conformally invariant tensor, which is the Bach tensor in dimension 4.

For modified conformal extensions of projective structures, the ambient metric exists and has the simplest conceivable form.
Namely, the term $g(\rho)$ is linear in $\rho$ and $\mb{g}$ is Ricci-flat globally, not only asymptotically:

\begin{thm} \label{Theorem_ambient}
Let $\ol{g}$ be a modified PW metric with the Schouten tensor $\ol\Rho$ and let $t$, $\rho$ be the additional coordinates on the ambient manifold.
Then 
\begin{align} \label{ambient-local}
\mb{g}=2\rho \d t\odot \d t + 2t \d t\odot \d\rho +t^2 \Big( \ol{g} +2\rho\,\ol\Rho \Big)
\end{align}
is a globally Ricci-flat Fefferman--Graham ambient metric of the conformal class of $\ol{g}$.
\end{thm}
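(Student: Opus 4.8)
The plan is to verify the three conditions that make $\mb g$ a Fefferman--Graham ambient metric in normal form for the conformal class of $\ol g$: that it is in the normal form \eqref{ambient-normal}, that its value at $\rho=0$ lies in the conformal class, and that it is Ricci-flat. The first two are immediate, for writing $g(\rho):=\ol g+2\rho\,\ol\Rho$ we have $g(0)=\ol g$, and $\mb g$ is manifestly of the form \eqref{ambient-normal} and homogeneous of degree $2$ in $t$. Hence the entire content is global Ricci-flatness. For this I would invoke the standard reduction (see \cite{Fefferman2012}, and the parallel computation in \cite{hsstz-ambient}): writing $g':=\partial_\rho g(\rho)$ and $g'':=\partial_\rho^2 g(\rho)$, the vanishing of $\Ric(\mb g)$ is equivalent to a \emph{tangential}, a \emph{mixed} and a \emph{normal} equation on $\wt M$, schematically
\[
\Ric(g_\rho)-\tfrac{2n-2}{2}\,g'-\tfrac12(\tr_{g_\rho}g')\,g_\rho+\rho\big(g''-g'g_\rho^{-1}g'+\tfrac12(\tr_{g_\rho}g')\,g'\big)=0,
\]
\[
\mathrm{div}_{g_\rho}g'-\d(\tr_{g_\rho}g')=0,\qquad
\tr_{g_\rho}g''-\tfrac12\tr_{g_\rho}(g'g_\rho^{-1}g')=0,
\]
the remaining ($t$-)components of $\Ric(\mb g)$ being forced to vanish by homogeneity. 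Since $g(\rho)$ is \emph{linear} in $\rho$, we have $g'=2\ol\Rho$ (independent of $\rho$) and $g''=0$, so only $\ol\Rho$, $\ol g$ and $g_\rho^{-1}$ enter.

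The key structural input is the behaviour of $\ol\Rho$. By Proposition \ref{relateR}, $\ol\Rho_{ab}=\wt\Rho_{ab}$ is the pullback of the projective Schouten tensor, hence strictly horizontal, $\ol\Rho_{ab}=\Rho_{AB}\chi_a^A\chi_b^B$ in the notation of \eqref{phi-chi-chi}. Using the orthogonality relations \eqref{xinu}, in particular $\chi_a^A\chi^{aB}=0$, I would record two consequences valid for \emph{every} $\rho$. First, $\ol\Rho$ is trace-free with respect to $g_\rho$, $\tr_{g_\rho}\ol\Rho=0$, whence $\tr_{g_\rho}g'=0$. Second, $\ol\Rho$ is nilpotent, $\ol\Rho_a{}^c\ol\Rho_{cb}=0$ when the middle index is raised by $g_\rho$; here one uses that $(g_\rho^{-1})^{cd}=\wt g^{cd}-(\Ph+2\rho\,\ol\Rho)^{cd}$ differs from the standard inverse by a strictly horizontal term, so $g_\rho^{cd}\chi_d^D=\chi^{cD}$. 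Consequently $g'g_\rho^{-1}g'=0$ and all trace terms vanish identically in $\rho$. This collapses the normal equation to $0=0$ and the mixed equation to $\mathrm{div}_{g_\rho}\ol\Rho=0$; but $\ol\Rho$ is precisely the Schouten tensor of $g_\rho$ (again Proposition \ref{relateR}), so its $g_\rho$-divergence equals $\d(\tr_{g_\rho}\ol\Rho)=0$ by the contracted Bianchi identity. Both are therefore satisfied for all $\rho$.

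It remains to verify the tangential equation, which after the above cancellations reduces to $\Ric(g_\rho)=(2n-2)\,\ol\Rho$. Here I would use the observation that $g_\rho=\wt g+(\Ph+2\rho\,\ol\Rho)$ is \emph{itself} a modified PW metric, its modification tensor $\Ph+2\rho\,\ol\Rho$ being a strictly horizontal symmetric tensor (the pullback of $\Ph_{AB}+2\rho\,\Rho_{AB}$). Proposition \ref{relateR} then gives $\Ric(g_\rho)=\wt\Ric$ and $\Rho(g_\rho)=\wt\Rho$ for \emph{every} $\rho$, so both are independent of $\rho$; since $\wt\Rho$ is trace-free this yields $\wt\Ric=(2n-2)\wt\Rho=(2n-2)\,\ol\Rho$, exactly the required identity. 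All three equations thus hold for all $\rho\in\mbb R$, so $\mb g$ is globally Ricci-flat. The only genuinely computational point is the identity $g'g_\rho^{-1}g'=0$ together with the vanishing of the traces, everything else being a consequence of strict horizontality via \eqref{xinu} and the Ricci/Schouten comparison of Proposition \ref{relateR}. I expect the main obstacle to be organizing these cancellations cleanly enough that the nonlinear Fefferman--Graham terms are seen to vanish to all orders in $\rho$ --- equivalently, recognizing that the linear family $g(\rho)$ never leaves the class of modified PW metrics.
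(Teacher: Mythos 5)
Your proof is correct, and it carries out in full the ``direct'' verification that the paper dismisses in one sentence (``This can be done directly, using \cite[formula (3.17)]{Fefferman2012}\dots''); the paper's actual detailed argument is the \emph{alternative} geometric one, namely identifying $\mb{g}$ with the $\wh\Phi$-modified PW metric of the Thomas ambient connection on the projective cone $\mc{C}$, whose Ricci-flatness then follows from the cone analogue of Proposition \ref{relateR}. The comparison is instructive: your route works entirely on $\wt M$ and rests on the observation that the linear family $g(\rho)=\ol g+2\rho\,\ol\Rho$ never leaves the class of modified PW metrics over the fixed connection $D_A$, so that $\Ric(g_\rho)$, $\Rho(g_\rho)$ and all the traces and the nilpotent square $g'g_\rho^{-1}g'$ are $\rho$-independent (indeed mostly zero) by strict horizontality and \eqref{xinu} --- this makes the three Fefferman--Graham equations close up exactly rather than just asymptotically, which is the whole point of the theorem. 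The paper's route trades this computation for a conceptual identification one dimension up: the same Proposition \ref{relateR}, applied on the cone to the Ricci-flat Thomas connection, yields Ricci-flatness of $\mb{g}$ in one stroke and simultaneously explains \emph{why} the ambient metric has this closed form. Your argument is self-contained modulo the quoted normal-form Ricci formula and is a legitimate complete proof; the only point worth making explicit is that Proposition \ref{relateR} is being invoked for the modification tensor $\Ph_{AB}+2\rho\Rho_{AB}$ at each fixed $\rho$, which is unproblematic since that proposition only needs the modification to be the pullback of a symmetric $2$-tensor on $M$ (the projective weight plays no role in the curvature comparison).
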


The Ricci, respectively Schouten, tensor of a modified PW metric is the pullback of the Ricci, respectively Schouten, tensor of the underlying affine connection, independently of the modification tensor $\Phi$, see section \ref{Curva}.
Hence, in local coordinates $(x^A,p_A)$ as above, the Schouten tensor of $\ol{g}$ has the form $\ol\Rho=\frac{1}{n-1}\,\Ric_{AB}\d x^A\odot\d x^B$, where $\Ric_{AB}$ is the Ricci tensor of the underlying affine connection and $n$ is the dimension of the corresponding manifold.

\begin{proof}
The normal form of the metric \eqref{ambient-local} is obvious, thus it is enough to check the Ricci-flatness.
This can be done directly, using \cite[formula (3.17)]{Fefferman2012} and the just mentioned properties of the Ricci tensor of $\ol{g}$.
Alternatively, the metric $\mb{g}$ can be obtained as the modified PW metric of the Thomas ambient connection associated to the initial projective structure.
The Ricci-flatness of $\mb{g}$ then follows from the Ricci-flatness of the Thomas ambient connection.
Details in this picture are as follows:

Let $\mc{C}$ and $\nabla$ be the Thomas ambient cone and connection, respectively, associated to the underlying projective structure.
The modification tensor $\Phi\in\ce_{(AB)}(2)$ lifts to a symmetric 2-tensor $\wh\Phi$ on $\mc{C}$ which is homogeneous of degree 2 with respect to the standard $\mbb{R}_+$-action on $\mc{C}$.
Finally, the $\hat\Phi$-modified PW metric of $\nabla$ is identified with \eqref{ambient-local} by the same coordinate transformation as in the proof of \cite[Theorem 2]{hsstz-ambient}.
\end{proof}

Another related notion is the \emph{$Q$-curvature}.
Although it is associated to a particular metric rather than to the conformal class, it is an important quantity in conformal geometry, see \cite{branson-functional}, \cite{chang-eastwood-Q}, \cite{fefferman-hirachi}.
The explicit and simple form of the Fefferman--Graham ambient metric allows the $Q$-curvature to be computed, which is generally a rather difficult task.

To do so, one analyzes $\mb\Delta^n\log(t)$, where $\mb\Delta$ is the ambient Laplacian associated to \eqref{ambient-local} and $t$ as above.
In the non-modified case, it easily follows that $\mb\Delta\log(t)=0$, hence the $Q$-curvature of any PW metric vanishes, see \cite[Theorem 3]{hsstz-ambient}.
Checking details in that reference, it follows that the modification tensor $\Phi$ does not play any role.
Hence we have the following generalization:

\begin{thm} \label{Q-curvature}
The $Q$-curvature of any modified PW metric vanishes.
\end{thm}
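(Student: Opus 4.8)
The plan is to leverage the explicit Ricci-flat ambient metric \eqref{ambient-local} furnished by Theorem \ref{Theorem_ambient} and to reduce the claim to the single identity $\mb\Delta\log(t)=0$, where $\mb\Delta$ denotes the Laplacian of $\mb{g}$. Indeed, by the ambient description of $Q$-curvature in even conformal dimension $2n=\dim\wt{M}$ (see \cite{Fefferman2012}), the $Q$-curvature of $\ol{g}$ is a fixed nonzero multiple of $\mb\Delta^{n}\log(t)$ restricted to $\wt{M}$; hence once $\mb\Delta\log(t)=0$ is established, every higher power vanishes as well. This parallels the non-modified computation of \cite[Theorem 3]{hsstz-ambient}, and the entire point is to verify that the modification tensor $\Phi$ leaves the argument intact.

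First I would work in coordinates $(t,\rho,x^A,p_A)$ and record that \eqref{ambient-local} is block-diagonal: the $(t,\rho)$-sector $\bigl(\begin{smallmatrix} 2\rho & t \\ t & 0\end{smallmatrix}\bigr)$ decouples from the base block $t^2(\ol{g}+2\rho\,\ol\Rho)$. The crucial observation I would then isolate is that $\Phi$ enters only through $\ol{g}=\wt{g}+\Phi$ inside the base block, while by Proposition \ref{relateR} the Schouten tensor satisfies $\ol\Rho=\wt\Rho$ and is therefore independent of $\Phi$; moreover $\ol\Rho$, like $\Phi$, is strictly horizontal. Consequently $\ol{g}+2\rho\,\ol\Rho$ retains the strictly off-diagonal pairing between $\d x^A$ and $\d p_B$ visible in \eqref{formula-modPW}, with vanishing $(p,p)$-block, so its determinant equals that of $\wt{g}$, namely the nonzero constant $(-1)^n$, independently of both $\rho$ and $\Phi$. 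From this I obtain that $\det\mb{g}$ is a nonzero constant times $t^{4n+2}$, so that $\sqrt{|\mb{g}|}$ is a constant multiple of $t^{2n+1}$, depending on neither $\rho$ nor $\Phi$.

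The remaining step is then routine. Since $\log(t)$ depends only on $t$, the Laplacian collapses to $\mb\Delta\log(t)=\tfrac{1}{\sqrt{|\mb{g}|}}\,\partial_i\bigl(\sqrt{|\mb{g}|}\,\mb{g}^{it}\,t^{-1}\bigr)$, and the block-diagonal structure forces $\mb{g}^{it}=0$ for every base index $i$, while inverting the $(t,\rho)$-sector gives $\mb{g}^{tt}=0$ and $\mb{g}^{\rho t}=t^{-1}$. Only the $i=\rho$ contribution survives, and since $\sqrt{|\mb{g}|}\,t^{-2}$ is proportional to $t^{2n-1}$ and hence independent of $\rho$, it differentiates to zero, giving $\mb\Delta\log(t)=0$. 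I expect the sole piece of genuine content, and thus the main obstacle, to be the determinant computation of the middle paragraph: confirming that the strict horizontality of $\Phi$ and $\ol\Rho$ renders $\det\mb{g}$ independent of $\rho$ and $\Phi$ is exactly what makes the modification invisible to the $Q$-curvature, in accordance with the comparison to \cite[Theorem 3]{hsstz-ambient}.
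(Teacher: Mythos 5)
Your proposal is correct and follows essentially the same route as the paper, which likewise reduces the claim to $\mb\Delta\log(t)=0$ for the explicit ambient metric \eqref{ambient-local} and observes (deferring the details to \cite[Theorem 3]{hsstz-ambient}) that the modification tensor plays no role. Your determinant argument — that strict horizontality of $\Phi$ and $\ol\Rho$ leaves the off-diagonal $\d x^A$–$\d p_B$ pairing intact, so $\sqrt{|\mb{g}|}$ is a constant multiple of $t^{2n+1}$ independent of $\rho$ and $\Phi$ — correctly supplies the detail the paper leaves to the reference.
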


\begin{remas}
Modified PW metrics fit into the class of so-called \emph{null Ricci Walker metrics} discussed in \cite{Anderson2016}, for which the existence of explicit ambient metrics has been shown in some interesting cases.
Modified PW metrics also form a subclass of more general modifications studied in \cite{Gilkey2009a}, which include, in particular, self-dual Walker metrics in dimension 4.
The Bach tensor of a self-dual conformal metric, i.e. the Fefferman--Graham obstruction tensor, vanishes identically.
While no geometric construction of the associated ambient metric is known for this class in general, an explicit formula for specific cases can be obtained.

For the sake of illustration, we allow extra modifications parametrized by smooth functions $\al\in\ce$ on the underlying projective manifold and determined by the distinguished vertical field $k^a\in\wt\ce^a$ from Remark \ref{rem-char}, namely, the homothety of the standard PW metric $\wt{g}_{ab}$.
Thus, we allow the generalization of  \eqref{eq-modPW} so that
\begin{align}
\ol{\ol{g}}_{ab}:=\wt{g}_{ab} +\Phi_{ab} +\al\, k_{(a} k_{b)} .
\label{extraPW}
\end{align}
Particular modifications of the form \eqref{extraPW} appear also in \cite{Dunajski2018}.
In the case when $\wt{g}$ is the flat PW metric, $\Phi=0$ and $\al=1$ we have the \emph{para Fubini--Study metric}, cf.  \cite{Bor2015}, \cite{Dunajski2019a}.
\end{remas}

\begin{prop}
Let $\ol{\ol{g}}$ be the extra modified PW metric \eqref{extraPW}, for some $\al\in\ce$, on a 4-dimensional manifold with the Schouten tensor $\ol{\ol\Rho}$ and let $t$, $\rho$ be the additional coordinates on the ambient manifold.
Then 
\begin{align} \label{ambient-extra}
\mb{g}=2\rho \d t\odot \d t + 2t \d t\odot \d\rho 
+t^2 \left( \ol{\ol{g}} +2\rho\,\ol{\ol\Rho} +\rho^2 \big( \al^2\ol{\ol{g}} +2\al\,(\d\alpha)\odot k \big) \right)
\end{align}
is a globally Ricci-flat Fefferman--Graham ambient metric of the conformal class of $\ol{\ol{g}}$.
Moreover, the $Q$-curvature of the metric $\ol{\ol{g}}$ is $Q=-56\al^2$.
\end{prop}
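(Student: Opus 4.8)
The plan is to establish the two assertions separately: first that $\mb g$ in \eqref{ambient-extra} is a Ricci-flat ambient metric in Fefferman--Graham normal form, and then to read off the $Q$-curvature from this explicit metric. Throughout I would follow the strategy of the proof of Theorem \ref{Theorem_ambient} and of \cite[Theorem 2]{hsstz-ambient}, \cite[Theorem 3]{hsstz-ambient}, the new feature being that the fibre term $\al\,k_{(a}k_{b)}$ in \eqref{extraPW} is no longer the pull-back of a tensor on $M$, so the metric coefficient $g(\rho)$ now acquires a genuine $\rho^2$-term.

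First I would record that \eqref{ambient-extra} is automatically in the normal form \eqref{ambient-normal}: the $\d t$, $\d\rho$ part is the required one and $g(\rho):=\ol{\ol g}+2\rho\,\ol{\ol\Rho}+\rho^2\big(\al^2\ol{\ol g}+2\al\,(\d\alpha)\odot k\big)$ satisfies $g(0)=\ol{\ol g}$, hence represents the conformal class. The substantive claim is Ricci-flatness, which I would verify by inserting $g(\rho)$ into the Fefferman--Graham equations, i.e.\ \cite[formula (3.17)]{Fefferman2012}. Since $g(\rho)$ is quadratic in $\rho$, one has $g'=2\ol{\ol\Rho}+2\rho\,g_2$ and $g''=2g_2$ with $g_2:=\al^2\ol{\ol g}+2\al\,(\d\alpha)\odot k$, and the equations reduce to polynomial identities in $\rho$ whose coefficients must vanish. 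The ingredients are: the Schouten tensor $\ol{\ol\Rho}$ of $\ol{\ol g}$ (which, unlike the plain modified case of Proposition \ref{relateR}, is affected by the fibre term and must first be computed); the homothety relation \eqref{k-homothety}, $\wt D_a k_b=\mu_{ab}+\wt g_{ab}$, governing the derivatives of $k$; and the strict horizontality of $\Phi$ and $\ol\Rho$, which kills most traces and quadratic contractions such as $\tr(g^{-1}\ol{\ol\Rho})$ and $\ol{\ol\Rho}\,g^{-1}\,\ol{\ol\Rho}$. The order-$\rho^0$ part reproduces the identification of the linear coefficient with twice the Schouten tensor; the higher orders are where $g_2$ must cancel, and I expect this order-$\rho^2$ cancellation — mediated by \eqref{k-homothety} together with the vanishing of the four-dimensional Fefferman--Graham obstruction (Bach) tensor, which holds because the structure is self-dual — to be the main obstacle. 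As a cross-check one could instead try to realise \eqref{ambient-extra} as a modification of the Thomas-connection PW metric by extending the coordinate transformation in the proof of \cite[Theorem 2]{hsstz-ambient}, but the fibre-quadratic nature of $\al\,k_{(a}k_{b)}$ makes the direct verification cleaner.

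For the $Q$-curvature, the manifold has dimension $2n=4$, so $Q$ is extracted from $\mb\Delta^{n}\log t$ with $n=2$, where $\mb\Delta$ is the Laplacian of the ambient metric \eqref{ambient-extra}. Following \cite[Theorem 3]{hsstz-ambient}, I would compute the inverse ambient metric and then $\mb\Delta\log t$: in the non-modified situation this already vanishes (whence Theorem \ref{Q-curvature}), whereas here the $\rho^2$-term in \eqref{ambient-extra} makes $\mb\Delta\log t$ nonzero and proportional to $\al^2$. Applying $\mb\Delta$ once more gives $\mb\Delta^{2}\log t$, and evaluating along $\rho=0$ with the appropriate homogeneity normalisation in $t$ yields the stated value $Q=-56\,\al^2$. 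The only delicate point here is bookkeeping: tracking the derivatives of $\al$ and $k$ through the two applications of $\mb\Delta$ and pinning down the universal constant $-56$; since $\al$ is a function on the base and all the $\rho$- and $t$-dependence is explicit, this is a finite and entirely mechanical computation.
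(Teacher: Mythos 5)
Your proposal takes essentially the same route as the paper, which likewise just notes that \eqref{ambient-extra} is already in normal form and then verifies Ricci-flatness (and the value $Q=-56\al^2$) by direct computation, delegated there to \textsc{Mathematica}. The one thing to drop is the appeal to Bach-tensor vanishing via self-duality as an ingredient of the order-$\rho^2$ cancellation: vanishing of the obstruction tensor is a consequence, not an input, of the existence of the smooth ambient metric, and the direct polynomial verification neither needs nor is helped by it.
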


Again, the metric \eqref{ambient-extra} is in the normal form, so it suffices to check its Ricci-flatness.
This was done directly using the computational systems \textsc{Mathematica}.
It follows that the expression \eqref{ambient-extra} is not unique: 
adding an arbitrary constant multiple of $(\wt{\Delta}\al)\,\ol{\ol{g}} -2\al\,(\d\al)\odot k$, 
where $\wt\Delta$ is the Laplacian of $\wt{g}$, into the inner parentheses provides also a Ricci-flat solution.

\appendix

\section{Projective BGG operators} \label{app-A}

The BGG theory is a conceptual framework comprehending many problems concerning invariant operators, respectively equations, in parabolic geometries.
For a general introduction, we refer to seminal papers \cite{Cap2001} and \cite{Calderbank2001}.
BGG operators appear in sequences, in which the first operator is the most frequent in applications.
Solutions to the corresponding equations often have significant geometrical meaning.
Prominent instances in conformal geometry are related to almost Einstein scales and infinitesimal symmetries.
These problems are studied in sections \ref{Einstein}--\ref{cKspecial}, where the corresponding overdetermined equations are reduced to systems of projectively invariant equations among which projective first BGG operators play an essential role.

Each BGG sequence is determined by a concrete tractor bundle and the related exterior covariant differential, from which an explicit description of the operators can be deduced.
On the underlying level, the type of sequence is encoded in the source bundle of the first operator.
In the locally flat case, BGG sequences form complexes, which are locally exact. 
Moreover, solutions to the first BGG equation form a vector space whose dimension equals to the rank of the background tractor bundle.

For purposes of this article, we need first two operators of several projective BGG sequences.
We denote these operators as $\B_1$ and $\B_2$, keeping in mind they are always related to the actual type of the sequence.
Each sequence corresponds to certain tensor product of the standard tractor bundle $\mc{T}$ and its dual $\mc{T}^*$.
The standard tractor bundle is associated to the standard action of the group $SL(n+1,\mbb{R})$, the Lie group of projective symmetries of the (oriented) projective sphere of dimension $n$.
In particular, the rank of $\mc{T}$ is $n+1$.

The common pattern of the following subsections is as follows.
Firstly, we specify the type of sequence and explicit formulas for first two operators.
Secondly, composing these two operators, we get an integrability condition for the existence of solutions to the first BGG equation.
Thirdly, we count the dimension of the solution space to that equation in the locally flat case.
Some operators depend on the dimension $n$ of the projective manifold.
This is caused by natural isomorphisms in low dimensions that are, on the underlying level, provided by the projective volume form
$\bep_{A_1 \ldots A_n} \in \ce_{[A_1 \ldots A_n]}(n+1)$, respectively its dual 
$\bep^{A_1 \ldots A_n} \in \ce^{[A_1 \ldots A_n]}(-n-1)$.

\def\barrow#1{\mathop{\longrightarrow}^{\B_{#1}}}
\def\rank{\operatorname{rank}}

\subsection{Sequence for $\mc{T}^*$} \label{A1}
The  corresponding BGG sequence starts as follows:
\begin{align*}
\ce(1) \barrow1 \ce_{(AB)}(1) \barrow2 \ce_{[CA]B}(1) \longrightarrow \cdots  
\end{align*}
where 
\begin{align}
\B_1(\ta)_{AB} = (D_A D_B + \Rho_{AB}) \ta, \qquad 
\B_2(T)_{CAB} =  D_{[C} T_{A]B} .
\label{BGGce1}
\end{align}
The composition yields
\begin{equation} \label{compBGGce1}
(\B_2 \circ \B_1)(\ta)_{CAB} = - \tfrac12 \bigl( W_{AB}{}^R{}_C D_R \ta - Y_{CAB} \ta).
\end{equation}
For $n=2$, the vanishing of \eqref{compBGGce1} means $Y_{CAB}=0$, i.e. the local flatness.
In locally flat case, the dimension of the solution space of $\B_1$ equals to 
$$
\rank\mc{T}^* = n+1.
$$

\subsection{Sequence for $\wedge^2\mc{T}^*$} \label{A2}
The corresponding BGG sequence starts as follows:
\begin{align*}
\ce_B(2) \barrow1 \ce_{(AB)}(2) \barrow2 \ce_{[AB][CD]}(2) \longrightarrow \cdots  
\end{align*}
where 
\begin{align}
\B_1(\ph)_{AB} = D_{(A}\ph_{B)}, \qquad
\B_2(\Phi)_{ABCD} = \Proj_\boxplus \bigl( D_A D_C \Phi_{BD} + \Rho_{AC} \Phi_{BD} \bigr) .
\label{BGGce_A2}
\end{align}
Here $\Proj_\boxplus$ denotes the projection to the subspace of the `window' symmetry corresponding to the indicated Young tableau, i.e. the subspace of $\ce_{[AB][CD]}(2)$ such that the skew-symmetrization over any triple of indices vanishes. 
Otherwise put, a short computation reveals that
\begin{equation} \label{B2expl}
\begin{split}
\B_2(\Phi)_{ABCD} = \Proj_{[AB][CD]} \bigl( D_A D_C \Phi_{BD} &+ \Rho_{AC}\Phi_{BD} \, +\\
&+ \tfrac14 W_{AB}{}^R{}_C \Ph_{DR} - \tfrac14 W_{CD}{}^R{}_A \Ph_{BR} \bigr) ,
\end{split}
\end{equation}
where $\Proj_{[\cdot\,\cdot][\cdot\,\cdot]}$ denotes the skew-symmetrization over the embraced indices.
The composition yields
\begin{align}
\begin{split} \label{compBGGce_A2}
(\B_2 \circ \B_1)(\ph)_{ABCD}= \Proj_{[AB][CD]} \bigl( 
& -\tfrac12 W_{AB}{}^R{}_C D_{[R} \ph_{D]} -\tfrac12 W_{CD}{}^R{}_A D_{[R} \ph_{B]} \, + \\
& + \tfrac14 W_{AB}{}^R{}_C D_{(D} \ph_{R)} - \tfrac14 W_{CD}{}^R{}_A D_{(B} \ph_{R)} \, - \\
& \hskip4em -\tfrac12 ( D_A W_{CD}{}^R{}_B )\ph_R - \tfrac12 \ph_C Y_{DAB} 
\bigr).
\end{split}
\end{align}
For $n=2$, the vanishing of \eqref{compBGGce_A2} means $\ph_{[C} Y_{D]AB}=0$.
This condition can be expressed so that $\ph_A = f (\star Y)_A$, where $(\star Y)_A := Y_{ABC}\bep^{BC}$, for some $f \in \ce(5)$.
In locally flat case, the dimension of the solution space of $\B_1$ equals to 
$$
\rank\wedge^2\mc{T}^* = \frac12n(n+1).
$$

\subsection{Sequence for $\mc{T}$} \label{A3}
The corresponding BGG sequence starts with $\ce^A(-1)$.
For $n=2$, we have the identification $\ce^A(-1) \cong \ce_A(2)$, hence the sequence coincides with the one in section \ref{A2}.
For $n\ge 3$, the sequence is as follows:
\begin{align*}
\ce^B(-1) \barrow1 \bigl( \ce_A{}^B(-1) \bigr)_0 \barrow2 \bigl( \ce_{[CA]}{}^B (-1)) \bigr)_0 \longrightarrow \cdots  
\end{align*} 
where
\begin{align}
\begin{split} \label{BGGce^A(-1)}
\B_1(\xi)_A{}^B = \bigl( D_A \xi^B \bigr)_0, \qquad   
\B_2(\Xi)_{CA}{}^B =  \bigl( D_{[C} \Xi_{A]}{}^B \bigr)_0 .
\end{split}
\end{align}
The composition yields
\begin{equation} \label{compBGGce^A(-1)}
(\B_2 \circ \B_1)(\xi)_{CA}{}^B = \tfrac12 W_{CA}{}^B{}_R \xi^R.
\end{equation}
In locally flat case, the dimension of the solution space of $\B_1$ equals to
$$
\rank\mc{T} = n+1.
$$

\subsection{Sequence for $\wedge^2\mc{T}$} \label{A4}
The corresponding BGG sequence starts with $\ce^{[AB]}(-2)$.
For $n=2$, we have the identification $\ce^{[AB]}(-2) \cong \ce(1)$, hence the sequence coincides with the one in section \ref{A1}.
For $n=3$, we have the identification $\ce^{[AB]}(-2) \cong \ce_A(2)$, hence the sequence coincides with the one in section \ref{A2}.
For $n\ge 4$, the sequence is as follows:
\begin{align*}
\ce^{[AB]}(-2) \barrow1 \bigl( \ce_D{}^{[AB]}(-2) \bigr)_0 \barrow2 \bigl( \ce_{[CD]}{}^{[AB]} (-2) \bigr)_0 \longrightarrow \cdots  
\end{align*}
where
\begin{align}
\begin{split} \label{BGGce^AB(-2)}
\B_1(w)_D{}^{AB} = \bigl( D_D w^{AB} \bigr)_0, \qquad   
\B_2(V)_{CD}{}^{AB} =  \bigl( D_{[C} V_{D]}{}^{AB} \bigr)_0 .
\end{split}
\end{align}
The composition yields
\begin{equation} \label{compBGGce^AB(-2)}
(\B_2 \circ \B_1)(w)_{CD}{}^{AB} = -  \bigl( W_{CD}{}^{[A}{}_R w^{B]R}  \bigr)_0.
\end{equation}
Note that, for $n=3$, the first operator actually coincide with the one in \eqref{BGGce^AB(-2)} (whereas the second does not) and the expression \eqref{compBGGce^AB(-2)} vanishes identically for any $w^{AB}$.
In locally flat case, the dimension of the solution space of $\B_1$ equals to
$$
\rank\wedge^2\mc{T} = \frac12n(n+1).
$$

\def\mm#1{#1^\#}

\subsection{Sequence for $(\mc{T}^* \otimes \mc{T})_0$} \label{A5}
The corresponding BGG sequence starts as follows:
\begin{align*}
\ce^C \barrow1 \bigl( \ce_{(AB)}{}^C \bigr)_0 \barrow2 \bigl( \ce_{[DA]B}{}^C \bigr)_0 \longrightarrow \cdots  
\end{align*}
where 
\begin{align}
\begin{split} \label{BGGce^A}
\B_1(v)_{(AB)}{}^C = \bigl( D_A D_B v^C + \Rho_{AB}v^C  \bigr)_0, \qquad   
\B_2(V)_{DAB}{}^{C} =  \bigl( D_{[D}V_{A]B}{}^C \bigr)_0 .
\end{split}
\end{align}
In the context of projective infinitesimal symmetries, we refer also to the following modification of the first operator:
\begin{align}
\mm{\B}_1(v)_{AB}{}^C = \bigl( D_A D_B v^C + \Rho_{AB}v^C + v^R W_{R{(A}}{}^C{}_{B)} \bigr)_0,  
\label{BGGwt}
\end{align}
One can verify that both $\B_1(v)_{AB}{}^C$ and $\mm{\B}_1(v)_{AB}{}^C$ are, indeed, symmetric in lower indices.
In fact, the target space of $\B_2$ is the subspace of $\bigl( \ce_{[DA]B}{}^C \bigr)_0$ of such tensor fields that vanish upon skew-symmetrization over lower indices.
The composition yields
\begin{equation} \label{compBGGwt}
(\B_2 \circ \mm{\B}_1)(v)_{DAB}{}^C = \tfrac12 (\mathcal{L}_v W)_{DA}{}^C{}_B ,
\end{equation}
where $\mathcal{L}_v$ denotes the Lie derivative in the direction of the vector field $v^A$.
In locally flat case, the dimension of the solution space of $\B_1$ equals to
$$
\rank(\mc{T}^* \otimes \mc{T})_0 = n(n+2).
$$

\section{Projective and affine infinitesimal symmetries} \label{app-B}

Both parts of this section concern affine, respectively projective, infinitesimal symmetries of an affine connection.
The following results are relevant primarily for section \ref{Symm2}.

\subsection{Affine symmetries among projective ones}

Let $D_A$ be a special torsion-free affine connection and $R_{AB}{}^C{}_D$ be its curvature tensor.
Let $\Rho_{AB}$ and $W_{AB}{}^C{}_D$ be the projective Schouten and Weyl tensor of $D_A$, respectively.
These tensors are related by
\begin{align}   \label{formulaCprojective}
W\ind{_{AB}^C_D} =R\ind{_{AB}^C_D} +\Rho_{AD}\de\ind*{^C_{B}}- \Rho_{BD}\de\ind*{^C_{A}} .
\end{align}

It is a common knowledge 
that a vector field $v^A\in\ce^A$ is an affine infinitesimal symmetry of the affine connection $D_A$ if and only if it satisfies 
\begin{equation} \label{affIS}
D_A D_B v^C + v^S R_{SA}{}^C{}_{B} =0 
\end{equation}
and it is a projective infinitesimal symmetry of the projective structure $[D_A]$ if and only if it satisfies
\begin{equation} \label{projIS}
\bigl( D_A D_B v^C + \Rho_{AB}v^C + v^R W_{R(A}{}^C{}_{B)} \bigr)_0=0.
\end{equation}
Clearly, any affine infinitesimal symmetry is also the projective one. 
The opposite consideration is specified as follows:

\begin{prop} \label{projIS-affIS}
Let $v^A \in \ce^A$ be a projective infinitesimal symmetry of a projective structure $[D_A]$.
Then $v^A$ is an affine infinitesimal symmetry of a representative special affine connection $D_A$ if and only if the divergence $D_R v^R$ is constant, i.e., $D_A D_R v^R=0$.
\end{prop}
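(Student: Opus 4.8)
The plan is to show that, once $v^A$ is known to satisfy the projective symmetry equation \eqref{projIS}, the full affine symmetry tensor $T_{AB}{}^C := D_A D_B v^C + v^S R_{SA}{}^C{}_B$ is \emph{pure trace}, of the form $\de^C_{(A}\la_{B)}$, and then to identify $\la_B$ with a constant multiple of $D_B(D_R v^R)$; the asserted equivalence then falls out immediately. As a preliminary I would record that, for the torsion-free $D_A$, the Ricci and first Bianchi identities give $D_{[A}D_{B]}v^C=\tfrac12 R_{AB}{}^C{}_R v^R$ and $v^S R_{S[A}{}^C{}_{B]}=-\tfrac12 R_{AB}{}^C{}_S v^S$, so these antisymmetric parts cancel and $T_{AB}{}^C$ is symmetric in $A,B$, equal to $D_{(A}D_{B)}v^C + v^R R_{R(A}{}^C{}_{B)}$.

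Next I would feed the curvature decomposition \eqref{formulaCprojective} into the projective operator. Since $D_A$ is special its Schouten tensor is symmetric, and a short contraction yields $v^R W_{R(A}{}^C{}_{B)} = v^R R_{R(A}{}^C{}_{B)} + v^R \Rho_{R(A}\de^C_{B)} - \Rho_{AB}v^C$. Substituting this into the bracket of \eqref{projIS}, the two occurrences of $\Rho_{AB}v^C$ cancel and the argument becomes exactly $T_{AB}{}^C + \de^C_{(A}\Rho_{B)R}v^R$. The last summand is pure trace, hence invisible to the trace-free projection $(\cdot)_0$, so \eqref{projIS} is equivalent to $(T_{AB}{}^C)_0 = 0$, i.e.\ $T_{AB}{}^C = \de^C_{(A}\la_{B)}$ for some $\la_B \in \ce_B$.

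It then remains to compute the trace. Contracting $T_{AB}{}^C=\de^C_{(A}\la_{B)}$ over $C$ and $A$ gives $\la_B = \tfrac{2}{n+1} T_{AB}{}^A$, so I would evaluate $T_{AB}{}^A = D_A D_B v^A + v^S R_{SA}{}^A{}_B$. Using the Ricci identity in the form $D_A D_B v^A = D_B(D_R v^R) + R_{AB}{}^A{}_R v^R$ and the skew-symmetry of $R$ in its first pair of indices, the two Ricci contractions $R_{AB}{}^A{}_R v^R$ and $v^S R_{SA}{}^A{}_B$ cancel precisely because the Ricci tensor of a special connection is symmetric. This leaves $T_{AB}{}^A = D_B(D_R v^R)$, hence $\la_B = \tfrac{2}{n+1}D_B(D_R v^R)$.

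Finally, affine symmetry \eqref{affIS} is the statement $T_{AB}{}^C = 0$, which, given the projective symmetry, holds if and only if $\la_B = 0$, that is, if and only if $D_B(D_R v^R)=0$; since $D_A$ acts on the function $D_R v^R$ as the ordinary differential, this is exactly the constancy of the divergence. I expect the main obstacle to be the curvature bookkeeping in the two middle steps — isolating the pure-trace remainder after the $W\to R$ substitution, and verifying that the Ricci contractions cancel — together with pinning down at exactly which points the specialness of $D_A$ (symmetry of $\Rho$, and of $\Ric$) is genuinely used.
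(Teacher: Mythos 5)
Your proof is correct, and it takes a genuinely different --- and more economical --- route than the paper's. Both arguments rest on the observation that the affine symmetry tensor $T_{AB}{}^C = D_AD_Bv^C + v^SR_{SA}{}^C{}_B$ is symmetric in $A,B$ and that \eqref{projIS} is precisely the vanishing of its trace-free part (here, as in \eqref{BGGce^A} and \eqref{spec-v}, the second-order term in \eqref{projIS} must be read as symmetrized over $A,B$; taken literally the bracket also carries the skew piece $\tfrac12 W_{AB}{}^C{}_Sv^S$, but since the equation is valued in $\bigl(\ce_{(AB)}{}^C\bigr)_0$ this is only a matter of convention and affects the paper's own formulas equally). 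The two proofs diverge in how the residual pure-trace part is handled. The paper identifies the trace with a nonzero multiple of $\Rho_{AR}v^R$ and must then kill that quantity by appealing to the prolonged system of the projective symmetry equation from \cite[formulas (71)]{hsstz-walker} under the constancy assumption on the divergence. You instead evaluate the trace exactly and in closed form: your contraction over $C$ and $A$ gives $T_{AB}{}^A = D_B(D_Rv^R)$ using the symmetry of $\Ric$, and the even quicker contraction over $C$ and $B$ gives $T_{AR}{}^R = D_A(D_Sv^S) + v^SR_{SA}{}^R{}_R = D_A(D_Sv^S)$ since a volume-preserving connection has $R_{SA}{}^R{}_R=0$ --- in either case specialness enters exactly once and visibly. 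Since \eqref{projIS} forces $T_{AB}{}^C = \de^C_{(A}\la_{B)}$ with $\la_B = \tfrac{2}{n+1}D_B(D_Rv^R)$, the equivalence $T_{AB}{}^C=0 \Leftrightarrow D_AD_Rv^R=0$ drops out at once, both directions together. What your approach buys is self-containedness (no reliance on the prolongation computed in the earlier paper) and a shorter argument; what the paper's approach records in passing is the additional fact that $\Rho_{AR}v^R=0$ for such fields, which your route does not produce.
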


\begin{proof}
The `only if' direction of the equivalence is read directly from \eqref{affIS}.
For the `if' direction, we assume that $v^A$ satisfies \eqref{projIS} and $\ps := \tfrac1n D_Rv^R$ is a constant function and want to show that \eqref{affIS} holds.
Let us decompose the left-hand side of \eqref{affIS} into the skew-symmetric, the symmetric trace-free and the trace part, respectively.
The skew-symmetric part vanishes trivially.
The condition \eqref{projIS} implies that the symmetric trace-free part also vanishes.
It remains to show that traces vanish as well.
An easy computation shows that (any) trace is a nonzero constant multiple of $\Rho_{AR}v^R$.
I.e., \eqref{affIS} is satisfied if and only if $\Rho_{AR}v^R=0$.

The equation \eqref{projIS} is overdetermined and its prolongation was computed in \cite[section 6]{hsstz-walker}. 
Besides $v^A$ and $\ps$, two additional variables are needed to form a closed system, namely,
\begin{align}
\phi_A^B := D_Av^B - \de_A^B \ps, \qquad
\be_A := -\tfrac{1}{n+1}D_A D_Rv^R - \Rho_{AR}v^R .
\label{fuck-off}
\end{align}
Now, solutions to \eqref{projIS} are exactly the solutions to the prolonged system, which is displayed in \cite[formulas (71)]{hsstz-walker} and which we do not reproduce here.
By the assumption that $\ps$ is constant, the second quantity in \eqref{fuck-off} reads as  $\be_A = -\Rho_{AR}v^R$.
Substituting everything to the third equation of the prolonged system, one obtains $\Rho_{AR}v^R=0$ after some computation.
\end{proof}

\subsection{Affine symmetries in dimension 2}
The specification of dimensions of algebras of affine infinitesimal symmetries is a classical subject.
It is known that, for affine connections on 2-dimensional manifolds, all dimension less or equal to 6, except for 5, are possible, see \cite{Egorov1967} and references therein.
In fact, all these dimensions can be realized as follows:

\begin{prop} \label{aff-symm-2}
The Lie algebra of affine infinitesimal symmetries of an affine connection on a 2-dimensional manifold can be of any dimension less or equal to 6, except for 5.
Each of these cases can be realized by a special projectively flat affine connection.
\end{prop}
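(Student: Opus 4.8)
The plan is to realize each admissible dimension by one explicit connection and to inherit the impossibility of $5$ from the cited classical count. The key reduction is Proposition \ref{projIS-affIS}: a projective infinitesimal symmetry $v^A$ of a projective structure $[D_A]$ is an affine symmetry of a special representative $D_A$ precisely when the divergence $D_R v^R$ is constant. Hence, for any special representative of the \emph{flat} projective structure on an open subset of $\mathbb{R}^2$, the affine symmetry algebra is the subalgebra of the $8$-dimensional projective symmetry algebra $\mathfrak{sl}(3,\mathbb{R})$ cut out by the single scalar condition that $D_R v^R$ be constant. This turns the whole statement into a matter of choosing representatives that realize the prescribed dimensions.

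Next I would parametrize those representatives. Every special projectively flat connection on a surface is, locally, the projective modification \eqref{eq-Dhat-D} of the flat connection by a closed, hence locally exact, one-form $\Upsilon=df$; exactness is exactly what makes the Schouten tensor $-\partial_A\partial_B f+\partial_A f\,\partial_B f$ symmetric, i.e.\ the connection special, while projective flatness is automatic by construction (and compatible with Theorem \ref{relateFlat}). For such a connection the contracted Christoffel symbol equals $3\,\partial_S f$ (here $n+1=3$), so $\widehat D_R v^R=\partial_R v^R+3\,v(f)$ with $v(f)=v^R\partial_R f$. Writing the projective symmetries of the flat structure in the standard affine-chart form $v^i=a^i+b^i{}_j x^j+(c_k x^k)x^i$, the defining condition becomes $\mathrm{tr}(b)+3(c_k x^k)+3\,v(f)\equiv\text{const}$, a linear system in the eight constants $a^i,b^i{}_j,c_k$ whose coefficients are governed by the derivatives of $f$. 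Setting $\Phi=\mathrm{tr}(b)+3(c_k x^k)+3\,v(f)$, I would then compute the solution dimension $d$ by matching coefficients in the polynomial identities $\partial_x\Phi=\partial_y\Phi=0$. Concretely, I expect the choices $f=0,\ x,\ x^2,\ x^2+y,\ x^2+y^2,\ x^2+y^3$ to yield $d=6,4,3,2,1,0$ respectively, which exhibits every admissible value in the required class.

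It remains to record that $d=5$ is unattainable. This is not proved here but is inherited: the classical classification \cite{Egorov1967} forbids $5$ for \emph{arbitrary} affine connections on surfaces, and our examples are in particular affine connections, so the same obstruction applies. Together with the six realizations above this yields exactly the list $\{0,1,2,3,4,6\}$, completing the proposition.

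The main difficulty I anticipate is calibration rather than concept: one must engineer $f$ so that the linear system drops rank by precisely the right amount. Symmetric choices overshoot — any $f$ depending on $x$ alone retains the full $y$-affine subgroup (giving $d=3$), and $f=x^2+y^2$ retains the rotation (giving $d=1$) — so isolating intermediate values forces deliberately asymmetric perturbations; dimension $2$ in particular needs something like $f=x^2+y$, whose survivors are the abelian pair $\langle\partial_y,\ \partial_x-2x\partial_y\rangle$. Each count is a routine but slightly delicate coefficient comparison, and for the cases aiming at $d\le 1$ one must verify genericity, namely that no accidental linear dependence among the functions $1,f',xf',f'',\dots$ secretly reinstates a symmetry.
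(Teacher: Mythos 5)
Your proposal is correct, and the realizing examples are essentially the paper's own: the paper's family is the projective change of the flat connection by the closed form $\Upsilon=(a_2 (x^1)^2+a_1 x^1 + a_0)\,\d x^1 + (b_1 x^2+b_0)\,\d x^2$, i.e.\ exactly $\Upsilon=df$ with $f$ a cubic in $x^1$ plus a quadratic in $x^2$, and your six choices of $f$ match (up to affine coordinate changes) the paper's six parameter specializations; both you and the paper inherit the impossibility of dimension $5$ from \cite{Egorov1967}. Where you genuinely diverge is in how the dimension counts are certified. The paper writes out the Christoffel symbols and solves the affine Killing equation \eqref{affIS} directly, delegating the verification to \textsc{Maple}. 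You instead invoke Proposition \ref{projIS-affIS} to identify the affine symmetry algebra of a special projectively flat representative with the subspace of the $8$-dimensional projective algebra $\mathfrak{sl}(3,\mathbb{R})$ of flat $[D_A]$ cut out by the condition that $\widehat D_R v^R=\partial_Rv^R+3\,v(f)$ be constant; this turns each count into an elementary coefficient comparison that can be checked by hand (I verified all six: $f=0,x,x^2,x^2+y,x^2+y^2,x^2+y^3$ give $d=6,4,3,2,1,0$, and your survivors for $d=2$ are indeed $\partial_y$ and $\partial_x-2x\partial_y$). The trade-off is that your route leans on Proposition \ref{projIS-affIS} (harmless, since that result precedes this one in the appendix and its proof is independent), while the paper's route is self-contained but computer-assisted; your reduction is arguably the more transparent and reusable argument. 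The only blemish is the parenthetical appeal to Theorem \ref{relateFlat}, which concerns conformal flatness of modified PW metrics and is irrelevant here, but nothing depends on it.
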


\begin{proof}
Let $(x^1,x^2)$ be a local coordinates and let us consider the family of affine connections obtained from the flat affine connection by the projective change \eqref{eq-Dhat-D}, where
\begin{align*}
\Ups = (a_2 (x^1)^2+a_1 x^1 + a_0) \d x^1 + (b_1 x^2+b_0) \d x^2 ,
\end{align*}
where $a_0,a_1,a_2,b_0,b_1$ are real parameters.
I.e., we consider the family of connections whose Christoffel symbols are
\begin{gather*}
\tfrac{1}{2}\Gamma_1{}^1{}_1=\Gamma_1{}^2{}_2=\Gamma_2{}^2{}_1 = a_2 (x^1)^2+a_1 x^1 + a_0, \\  
\tfrac{1}{2}\Gamma_2{}^2{}_2=\Gamma_1{}^1{}_2=\Gamma_2{}^1{}_1 = b_1 x^2+b_0, \qquad
\Gamma_2{}^1{}_2=\Gamma_1{}^2{}_1 = 0.
\end{gather*}
For particular values of parameters, we achieve all the possibilities as follows:
\begin{enumerate}[(1)]
\item[(0)]
Generic values of parameters provide no affine symmetry.
\item[(1)]
For $a_2=0$, the symmetry algebra has dimension 1.
\item[(2)]
For $a_2=b_1=0$, the symmetry algebra has dimension 2.
\item[(3)]
For $a_2=b_1=b_0=0$, the symmetry algebra has dimension 3.
\item[(4)]
For $a_2=b_1=b_0=a_1=0$, the symmetry algebra has dimension 4.
\item[(6)]
For all parameters vanishing, i.e. for the flat connection, the symmetry algebra has dimension 6.
\end{enumerate}
The symmetry of the Ricci tensor was checked and the affine symmetry algebra was generated using the computational system \textsc{Maple}.
\end{proof}


\end{document}